\newtheorem{assumption}[theorem]{Assumption}
\def\la{\langle}
\def\ra{\rangle}
\def\cF{{\cal F}}
\def\1{{\bf 1}}
\def\e{\epsilon}
\def\R{\mathbb{R}}
\def\e{\epsilon}
\newcommand{\EXP}[1]{\mathsf{E}\!\left[#1\right] }
\def\argmin{\mathop{\rm argmin}}
\def\be{\begin{equation}}
\def\ee{\end{equation}}
\def\bit{\begin{itemize}}
\def\eit{\end{itemize}}
\providecommand{\ip}[2]{\langle #1, #2 \rangle} 
\newcommand{\cmark}{\ding{51}}
\newcommand{\xmark}{\ding{55}}
\newcommand{\remove}[1]{}
\begin{document}

\title{Popov Mirror-Prox Method for Variational Inequalities\thanks{This work was funded by the National Science Foundation awards CCF-2106336 and CIF-2134256.} \thanks{Some of the preliminary results have appeared in \cite{chakraborty2024popov} (accepted in IEEE CDC 2024) for deterministic settings but under more restrictive assumptions on the VI mapping being monotone and Lipschitz continuous. Here, we work on mappings having a polynomial growth (eg. H\"older growth) on its variation (possibly having discontinuous jumps) and extend our results for both stochastic and deterministic VI scenario.}
}



\author{Abhishek Chakraborty         \and
        Angelia Nedi\'c 
}


\institute{Abhishek Chakraborty \at
              School of Electrical Computer and Energy Engineering, Arizona State University, Tempe, Arizona, USA\\
              \email{achakr61@asu.edu}           
           \and
           Angelia Nedi\'c \at
              School of Electrical Computer and Energy Engineering, Arizona State University, Tempe, Arizona, USA\\
              \email{Angelia.Nedich@asu.edu}
}

\date{Received: date / Accepted: date}

\maketitle

\begin{abstract}
This paper establishes the convergence properties of the Popov mirror-prox algorithm for solving stochastic and deterministic variational inequalities (VIs) under a polynomial growth condition on the mapping variation. Unlike existing methods that require prior knowledge of problem-specific parameters, we propose step-size schemes that are entirely parameter-free in both constant and diminishing forms. For stochastic and deterministic monotone VIs, we establish optimal convergence rates in terms of the dual gap function over a bounded constraint set. Additionally, for deterministic VIs with H\"older continuous mapping, we prove convergence in terms of the residual function without requiring a bounded set or a monotone mapping, provided a Minty solution exists. This allows our method to address certain classes of non-monotone VIs. However, knowledge of the H\"older exponent is necessary to achieve the best convergence rates in this case. By extending mirror-prox techniques to mappings with arbitrary polynomial growth, our work bridges an existing gap in the literature. We validate our theoretical findings with empirical results on matrix games, piecewise quadratic functions, and image classification tasks using ResNet-18.
\keywords{Variational inequality \and Popov mirror-prox \and Bregman divergence}
\subclass{65K15 \and 47N10 \and 49J40 \and 90C25}
\end{abstract}

\section{Introduction}\label{sec:Intro}

Variational inequalities (VIs) provide a fundamental framework for modeling equilibrium problems~\cite{nagurney1999network, ralph1999equilibrium}, as well as applications in economics, optimization, and game theory~\cite{facchinei2003finite, kinderlehrer2000introduction}. Owing to their broad applicability, extensive research has focused on developing efficient algorithms for solving VIs. Classical methods target Lipschitz continuous mappings~\cite{korpelevich1976extragradient, popov1980modification} or mappings with bounded variation~\cite{nesterov2007dual}. More recently, motivated by applications such as smooth multi-armed bandits~\cite{liu2021smooth} and heart rate variability analysis~\cite{nakamura2005local}, H\"older continuous mappings have gained attention. Adaptive universal step-size methods have since been proposed to handle such settings in VIs without prior knowledge of the H\"older parameters~\cite{ablaev2022some, stonyakin2022generalized, klimza2024universal}. However, these methods still require knowledge of the diameter of the constraint set and are not completely parameter-free. In this work, we extend the analysis under a weaker assumption on the mapping $F$, requiring that
\begin{align}
    \|F(x) - F(y) \|_* \leq L_\nu \|x-y\|^{\nu} + M_\nu,  \qquad \forall x,y \in X, \label{mapping_growth}
\end{align}
with constants $L_\nu >0$, $M_\nu \geq 0$, and $\nu \in [0,\infty)$. The class of mappings satisfying ~\eqref{mapping_growth} generalizes the growth conditions studied in the literature, encompassing H\"older continuity ($\nu \in [0,1]$ and $M_\nu = 0$), Lipschitz continuity ($\nu = 1$ and $M_\nu = 0$), and bounded variation ($\nu = 0$). To the best of our knowledge, no prior work has studied VIs for such a general class of mappings, which can model a wide range of real-world problems in optimization, machine learning, and game theory.

The mirror-prox algorithm for VIs was originally developed for deterministic settings with Lipschitz continuous mappings in \cite{nemirovski2004prox}, and later extended to stochastic settings in \cite{juditsky2011solving} under the growth condition~\eqref{mapping_growth} with $\nu = 1$. Both works employ Korpelevich-style updates. However, mirror-prox can also be implemented using Popov-style updates which we initiated in~\cite{chakraborty2024popov}, where we established some preliminary results on the convergence rates
for a deterministic VI with a Lipschitz continuous and monotone mapping. In this current paper, we analyze the Popov mirror-prox algorithm for solving \textit{stochastic monotone} VIs, where the mapping $F$ satisfies the generalized growth condition~\eqref{mapping_growth}. We further extend the analysis to \textit{deterministic} VIs with \textit{H\"older continuous (not necessarily monotone)} mappings. These contributions distinguish this current paper from our initial work~\cite{chakraborty2024popov}. Our results apply to this broader class of mappings under both constant and diminishing, non-adaptive, parameter-free step sizes, achieving optimal convergence rates. However, in the special case where $M_\nu = 0$ and the mapping is deterministic, knowledge of the H\"older exponent is required to select an appropriate step size for optimal performance.

A variety of algorithms has been developed for solving VIs in both deterministic and stochastic settings. One of the earliest and most well-known algorithms for solving strongly monotone VIs is the projection method \cite{bertsekas2009projection}, which iteratively updates a candidate solution by descending along the direction of the VI mapping evaluated at the current iterate and then projecting it back onto the constraint set. However, it has been observed that for certain classes of monotone problems, the projection method fails to converge and can even diverge \cite{gidel2018variational, mokhtari2020unified}, which has prompted the study of alternative approaches, such as the Korpelevich (also known as Extragradient) method \cite{korpelevich1976extragradient} and the Popov method \cite{popov1980modification}. These methods introduce an intermediate step that refines the iterative process, leading to convergence guarantees in a broader range of problem settings. Korpelevich method uses two VI mapping evaluations for updating a single iterate. In the regime of stochastic problems, work \cite{mishchenko2020revisiting} improves the stochastic Korpelevich method by reducing per-iteration oracle queries for estimating the stochastic VI mapping using a single random stochastic sample per iteration, while
still requiring two mapping evaluations. 
The Popov method cuts the computational cost of the mapping evaluations to half by reusing an old mapping value. Single step Popov style updates, also known as optimistic gradient descent ascent, have been studied in the literature \cite{mokhtari2020unified, jiang2022generalized}, as well as operator extrapolation techniques~\cite{kotsalis2022simple}. However, none of these methods have been studied under the growth 
condition~\eqref{mapping_growth}.

Mirror-prox algorithms generalize classical methods by using Bregman distances instead of the Euclidean distance. These methods can be more effective depending on the structure of the constraint set and the VI mapping. Although mirror-prox is conceptual and involves solving a subproblem at each iteration, \cite{nemirovski2004prox} showed that the Korpelevich-style updates can solve this subproblem in the deterministic setting, leading to the development of the Korpelevich mirror-prox method. Earlier works on deterministic VIs assumed monotone and Lipschitz continuous mappings. The paper~\cite{juditsky2011solving} extended the Korpelevich mirror-prox to the stochastic setting, assuming the monotonicity and a Lipschitz-type growth with bounded variation on the mapping, i.e., $\nu = 1$ and $M_\nu = M_1$ in relation~\eqref{mapping_growth}. The convergence rate in~\cite{juditsky2011solving} depends on $M_1$, the step size, the diameter of the constraint set, and the noise variance. In practice, these parameters are often unknown, making the method difficult to implement. Work \cite{dang2015convergence} studied the Korpelevich method under the H\"older continuity of the mapping but required knowledge of problem constants and the H\"older exponent for step-size selection. Motivated by such limitations, universal algorithms have been proposed \cite{babanezhad2020geometry, bach2019universal, rodomanov2024universality, klimza2024universal} to address Nesterov's question \cite{nesterov2007dual} — whether methods can adapt to the continuity level of the VI mapping. However, even these universal methods are not entirely parameter-free, as they still require knowledge of the constraint set's diameter for step size selection.

The Popov variant of the mirror-prox algorithm was introduced in~\cite{semenov2017version}, where the convergence of the iterates was shown for Lipschitz continuous and pseudo-monotone mappings. The method was further applied to min-max GAN problems in~\cite{gidel2018variational}. Work~\cite{azizian2021last} studied the method under Lipschitz continuity for quasi-sharp stochastic VIs and established convergence rates. In the deterministic setting, the paper~\cite{azizian2024rate} analyzes the method under the assumption that the mapping is Lipschitz continuous and strongly monotone within a convex neighborhood, and examines last iterate convergence and the role of different Bregman distance functions. More recently, \cite{chakraborty2024popov} showed that for deterministic monotone VIs with Lipschitz continuous mappings, Popov-style updates using proximal mappings can approximate the conceptual mirror-prox method~\cite{nemirovski2004prox}, with the accumulated approximation error vanishing under proper initialization~\cite[Lemma 3]{chakraborty2024popov}. 
These results provide a strong theoretical foundation for extending the Popov mirror-prox method to more general settings, including both stochastic and deterministic VIs under relation~\eqref{mapping_growth}. To the best of our knowledge, no prior work has addressed convergence analysis for such VIs using parameter-free schemes with both diminishing and constant step sizes, as done in this paper.

\begin{table}[t]
    \centering
    \begin{tabular}{ccccc}
         \toprule
         VI Type & Monotone & Parameters in~\eqref{mapping_growth} & Bounded Set & Merit Function\\
         \midrule
         Stochastic& \cmark & $M_\nu \neq 0$, $\nu \in [0,\infty)$ & \cmark & 
         Expected Dual Gap\\
         \hline
         Deterministic & \xmark & $M_\nu = 0$, $\nu \in (0,1]$ & \xmark & Residual\\
         \bottomrule
    \end{tabular}
    \caption{A summary of our assumptions and the merit function used to analyze the convergence of Popov mirror-prox for stochastic and deterministic VIs.}
    \label{tab:my_work}
\end{table}

\noindent \textbf{Contributions and flow of the paper:} For the first time, we study the convergence rates of stochastic and deterministic Popov mirror-prox algorithm with the VI mapping satisfying relation~\eqref{mapping_growth}. We employ constant and diminishing step-size schemes that are completely problem-parameter-free, but non-adaptive in the stochastic case. We establish the convergence of the method in terms of the (expected) dual gap function over a bounded constraint set. Additionally, for H\"older continuous deterministic VIs with exponent $\nu \in (0,1]$ and $M_\nu = 0$ in~\eqref{mapping_growth}, we prove the convergence of the algorithm in terms of the residual function~\cite{dang2015convergence}, without requiring a bounded set or a monotone VI mapping, as long as a Minty solution exists. Consequently, our approach can also handle non-monotone VIs; however, achieving the optimal rates requires knowledge of problem parameters, particularly the H\"older exponent $\nu$. Table~\ref{tab:my_work} concisely summarizes the assumptions on the mapping and the associated merit functions used in this work for showing the convergence rates of the method.

The flow of the paper is as follows. 
We formulate the problem in Section~\ref{sec:problem} along with the related assumptions. In Section~\ref{sec:stoc_popov}, we introduce the stochastic Popov mirror-prox algorithm and state some auxiliary results on scalar sequences. The convergence rates of the stochastic case are studied in Section~\ref{sec:stochastic_popov_conv}. The deterministic Popov mirror-prox algorithm is provided in Section~\ref{sec:deterministic_popov} along with its convergence rates. We demonstrate the performance of the algorithm in Section~\ref{sec:simulations} through simulations for a noisy matrix game, minimization of a piecewise quadratic function, and MNIST and CIFAR-10 data classification using ResNet-18 model. We conclude the paper in Section~\ref{sec:conclusion}.

\smallskip

\noindent \textbf{Notations:} We consider the Euclidean space $\R^n$ and use $\la \cdot, \cdot \ra$ to denote the standard inner product. The primal space is the pair ($\R^n,\|\cdot\|$) equipped with some norm $\|\cdot\|$. The dual space is the pair ($\R^n, \|\cdot\|_*$), where 
 $\|\cdot\|_*$ is the conjugate norm associated with $\|\cdot\|$, defined by $\|x\|_* = \max_{\|z\| \leq 1} \la x,z \ra$.
\section{Problem Formulation and Assumptions}\label{sec:problem}
Let $X \subseteq \R^n$ be a given set in the primal space ($\R^n, \|\cdot\|$). Let $F(\cdot): X \rightarrow \R^n$ be a mapping, where the range of the mapping $F$ belongs to the dual space ($\R^n, \|\cdot\|_*$). The classic variational inequality problem VI$(X,F)$ (Stampacchia \cite{stampacchia1970variational}) is the problem of determining 
a point $x^* \in X$ such that
\begin{align}
    \la F(x^*), x-x^* \ra \geq 0, \quad \forall x \in X . \label{strong-vi}
\end{align} Such a point $x^*$ is a classic solution to the  VI$(X,F)$, which is also referred to as a ``strong" solution to differentiate it from a different concept of a solution. Namely, given a set $X \subseteq \R^n$ and a mapping $F(\cdot): X \rightarrow \R^n$, one seeks to find a point $x^*\in X$ such that 
\begin{align}
    \la F(x), x-x^* \ra \geq 0, \quad \forall x \in X. \label{minty-vi}
\end{align}
Such a point is referred to as a ``weak" solution to VI$(X,F)$, or 
a Minty solution~\cite{minty1967generalization, kinderlehrer2000introduction}. We will refer to such a point as a Minty solution, while a point satisfying~\eqref{strong-vi} will be referred to as a solution.

We consider Stochastic Variational Inequality (SVI) problem, where the mapping $F(\cdot)$ is given as the expected value, i.e., $F(x) = \EXP{\widehat F(x, \xi)}$ for all $x \in X$, with $\EXP{\cdot}$ being the expectation operator with respect to the random variable $\xi$ taking values in a set $\Omega$.
The solution concepts for the SVIs are the same as those for the determistic VIs.

Next, we provide some assumption that will be used in the sequel. Not all of them will be always used.
\begin{assumption}\label{asum-set1}
    The constraint set $X\subseteq\mathbb{R}^n$ is non-empty, closed, and convex.
\end{assumption}
We implicitly assume that the set $X$ is easy to project on.
\begin{assumption}\label{asum-holder}
    The mapping $F: X \rightarrow \R^n$ satisfies the following condition
    for some $\nu \in [0,\infty)$, $L_\nu > 0$, and $M_\nu \geq 0$,
    \begin{align*}
        \|F(x) - F(y) \|_* \leq L_\nu \|x-y\|^{\nu} + M_\nu , \qquad \forall x,y \in X.
    \end{align*}
\end{assumption}
    Assumption~\ref{asum-holder} allows for a discontinuous mapping $F$. 
    When $M_\nu = 0$ and $\nu \in [0,1]$, Assumption~\ref{asum-holder} reduces to the H\"older continuity~\cite{ablaev2022some}:
    \begin{align}
        \|F(x) - F(y) \|_* \leq L_\nu \|x-y\|^{\nu} , \qquad \forall x,y \in X. 
        \label{holder-smooth}
    \end{align}
    Moreover, for $\nu \in (0,\infty)$, the preceding relation implies that the mapping $F$ is continuous, and it is Lipschitz continuous when $\nu = 1$. 
    When $\nu = 0$  in Assumption~\ref{asum-holder}, 
    the condition reduces to the bounded variation for the mapping. 

%

%
%


We will work with the Bregman divergence $B_\psi(\cdot,\cdot)$ induced by a continuously differentiable and strongly convex function $\psi:X \rightarrow \R$. Specifically, we assume that the function $\psi(\cdot)$ is $\alpha$-strongly convex, i.e., for $\alpha>0$,
    \begin{align}
        \psi(z) \geq \psi(x) + \la \nabla \psi(x), z-x \ra + \frac{\alpha}{2} \| z - x \|^2 , \quad \forall z,x \in X. \nonumber
    \end{align}
The Bregman divergence $B_\psi(\cdot,\cdot)$ is given by
\begin{align}
    B_{\psi}(z,x) = \psi(z) - \psi(x) - \ip{\nabla \psi(x)}{z-x}, \quad \forall z,x \in X . \label{breg-dist}
\end{align}
By the strong convexity of the function $\psi$, we have
%
%
\begin{align}
    B_{\psi}(z,x) \ge \frac{\alpha}{2} \|z-x\|^2, \quad \forall z,x \in X. \label{breg-lb}
\end{align}
When $\psi(x) = \frac{1}{2} \|x\|^2$ for all $x \in X$, relation~\eqref{breg-lb} holds as equality with $\alpha = 1$.



\section{Stochastic Popov Mirror-Prox}\label{sec:stoc_popov}

Here, we consider an SVI problem VI$(X,F)$ with 
$F(x) = \EXP{\widehat F(x, \xi)}$ for all $x \in X$, where the random variable $\xi$ takes values in a sample set $\Omega$.
For such a problem, we present the stochastic Popov mirror-prox algorithm, some results related to step size selection, and some basic relations for  
the iterates.

\subsection{Stochastic Popov Mirror-prox Algorithm}
The stochastic Popov mirror-prox algorithm uses stochastic evaluations of the mapping $\EXP{\widehat F(x, \xi)}$ at a given $x \in X$ by drawing a random sample $\xi(x) \in \Omega$ (according to the true distribution of $\xi$). 
The algorithm maintains two sequences $\{y_t\}$ and $\{x_t\}$, defined as:
\begin{align}
    &y_{t+1} = \argmin_{z \in X} [\la \gamma_t \widehat F(y_{t}, \xi_{t}) - \nabla \psi(x_{t}), z \ra + \psi(z)], \label{stoc-pov1}\\
    &x_{t+1} = \argmin_{z \in X} [\la \gamma_t \widehat F(y_{t+1}, \xi_{t+1}) - \nabla \psi(x_{t}), z \ra + \psi(z)] , \label{stoc-pov2}
\end{align}
where $\gamma_t > 0$ is a stepsize and $\xi_t \in \Omega$ is a random sample drawn at the iterate $y_t$.
Note that only one random sample $\xi_t \in \Omega$ and one mapping evaluation $\widehat F(y_t,\xi_t)$ is used to compute $x_t$ and $y_{t+1}$. The method is initiated randomly by choosing $x_0\in X$ according to some distribution with the support on the set $X$, and setting $y_0 = x_0$.
We assume that $\EXP{\|x_0\|^2} < \infty$.
The output of the algorithm is the weighted average $y^{(t+1)}$ of $\{y_t\}$, given by
\begin{align}
    y^{(t+1)} = \frac{\sum_{\tau=\tilde t}^t \omega_\tau y_{\tau+1}}{\sum_{\tau=\tilde t}^t \omega_\tau}, \qquad\hbox{$t\ge 0$,} \label{avg-itr}
\end{align}
where $t\ge \tilde t$, with some choice of $\tilde t$ and weights $\omega_\tau>0$ for all $\tau = \tilde t, \ldots, t$ 
to be discussed later. Compared to the Korpelevich version of the stochastic mirror-prox algorithm~\cite{juditsky2011solving}, the stochastic Popov mirror-prox uses a single mapping computation and, hence, can be computationally more effective.

\subsection{Preliminary Results}

Here, we first present some preliminary results on the upper and lower bounds of the sums of scalar sequences, which will be applied to the step sizes later on.

%
%
\begin{lemma}\label{lem-step-bound}
    Given any $T \geq 1$, for $\gamma_t = \frac{c}{\sqrt{t+1}}$, with $c>0$, we have
        \begin{align}
            \sum_{t=\lceil \frac{T}{2} \rceil}^T \gamma_t^2 \leq c^2 \ln 4 , \quad \text{and } \quad \sum_{t=\lceil \frac{T}{2} \rceil}^T \gamma_t \geq \frac{c \sqrt{T+1}}{2} . \nonumber
        \end{align}
\end{lemma}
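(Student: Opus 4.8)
The plan is to bound each of the two sums by comparing the summand to a monotone function and integrating. Since $\gamma_t = c/\sqrt{t+1}$ is positive and decreasing in $t$, for the first sum I would write $\gamma_t^2 = c^2/(t+1)$ and observe that $1/(t+1)$ is decreasing, so $\sum_{t=a}^{T} \frac{1}{t+1} \le \int_{a-1}^{T} \frac{dx}{x+1} = \ln\frac{T+1}{a}$ with $a = \lfloor T/2\rfloor$. It then remains to check that $\frac{T+1}{\lfloor T/2\rfloor} \le 6$ for all integers $T\ge 1$; this is a finite-case plus monotonicity argument, the worst case being small $T$ (e.g.\ $T=1$ gives $\lfloor 1/2\rfloor = 0$, which needs the harmless convention that the sum starts at $t=0$, giving $\sum_{t=0}^{1}\frac1{t+1} = 1 + \tfrac12 = \tfrac32 \le \ln 6 \approx 1.79$; for $T\ge 2$ one has $\lfloor T/2\rfloor \ge T/2 - 1/2 = (T-1)/2$, so $\frac{T+1}{\lfloor T/2\rfloor}\le \frac{2(T+1)}{T-1}$, which is decreasing and at $T=2$ equals $6$). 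So $\sum_{t=\lfloor T/2\rfloor}^T\gamma_t^2 \le c^2\ln 6$.

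For the second sum I would lower-bound using monotonicity the other way: since $1/\sqrt{t+1}$ is decreasing, $\sum_{t=a}^{T}\frac{1}{\sqrt{t+1}} \ge \int_{a}^{T+1}\frac{dx}{\sqrt{x+1}} = 2\bigl(\sqrt{T+2} - \sqrt{a+1}\bigr)$ with $a=\lfloor T/2\rfloor$. A cleaner route that avoids the square-root-of-a-sum algebra: each of the $T - \lfloor T/2\rfloor + 1 \ge \lceil T/2\rceil + 1$ terms is at least $\gamma_T = c/\sqrt{T+1}$, the smallest one, so
\begin{align}
    \sum_{t=\lfloor T/2\rfloor}^{T}\gamma_t \;\ge\; \Bigl(T - \Bigl\lfloor \tfrac{T}{2}\Bigr\rfloor + 1\Bigr)\frac{c}{\sqrt{T+1}} \;\ge\; \Bigl(\tfrac{T+1}{2}\Bigr)\frac{c}{\sqrt{T+1}} \;=\; \frac{c\sqrt{T+1}}{2},\nonumber
\end{align}
using that $T - \lfloor T/2\rfloor + 1 = \lceil T/2\rceil + 1 \ge (T+1)/2$ for every integer $T\ge 1$. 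This gives the second inequality directly.

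I expect no serious obstacle here; the only mild subtlety is bookkeeping of the floor/ceiling functions and the index range near $T=1$, where $\lfloor T/2\rfloor = 0$, so one must be slightly careful that the stated convention for the starting index of the sum is respected and that the constant $\ln 6$ (rather than something smaller) is exactly what is needed to absorb the small-$T$ cases. Everything else is elementary comparison of sums with integrals or with their extreme terms.
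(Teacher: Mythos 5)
Your proof is correct and follows essentially the same route as the paper: an integral comparison gives the upper bound on $\sum\gamma_t^2$, and the lower bound on $\sum\gamma_t$ comes from bounding every term below by the smallest term $\gamma_T=c/\sqrt{T+1}$ and counting the number of terms. If anything, your bookkeeping of the floor function (treating $T=1$ separately and using $\lfloor T/2\rfloor\ge (T-1)/2$ for $T\ge 2$) is more careful than the paper's, whose chain of inequalities replaces $\lfloor T/2\rfloor$ by $T/2$ in a denominator --- a step that goes the wrong way for odd $T$ --- so your version actually patches a small slip in the published argument.
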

The proof of Lemma~\ref{lem-step-bound} is given in Appendix~\ref{lem-step-bound-proof}.
\begin{lemma}\label{lem-step-bd2}
    For $\gamma_t = \frac{c}{(t+1)^a}$, $t\ge0$, with constants $c>0$ and $0<a<1$, we have the following relations, for any $T \geq 0$:
    \begin{enumerate}[i)]
        \item The quantity $\sum_{t=0}^T \gamma_t$ is lower bounded as
        \begin{align}
            \sum_{t=0}^T \gamma_t \geq c(T+1)^{1-a}. \nonumber
        \end{align}
        \item The quantity $\sum_{t=0}^T \gamma_t^{\frac{2}{1-p}}$, for any $p \in [0,1)$, is upper bounded as
        \begin{align}
            \sum_{t=0}^T \gamma_t^{\frac{2}{1-p}} \leq \begin{cases}
                \frac{(1-p) c^{\frac{2}{1-p}}}{1-p-2a} (T+1)^{\frac{1-p-2a}{1-p}}  \quad &\text{when $0<a<\frac{1-p}{2}$,}\\
                c^{\frac{2}{1-p}} \left( 1 + \ln (T+1) \right)  \quad &\text{when $a=\frac{1-p}{2}$,} \\
                \frac{ 2a c^{\frac{2}{1-p}}}{2a+p-1}  \quad &\text{when $\frac{1-p}{2}<a<1$} .
            \end{cases} \nonumber
        \end{align}
        \item The quantity $\sum_{t=0}^T \gamma_t^{\frac{2p}{1-p}}$, for any $p \in (0,1)$, is upper bounded as
        \begin{align}
            \sum_{t=0}^T \gamma_t^{\frac{2p}{1-p}} \leq \begin{cases}
                \frac{(1-p) c^{\frac{2p}{1-p}}}{1-p-2p a} (T+1)^{\frac{1-p-2p a}{1-p}} \quad &\text{when $0<a<\frac{1-p}{2 p}$,}\\
                c^{\frac{2 p}{1-p}} (1+ \ln (T+1))  \quad &\text{when $a=\frac{1-p}{2 p}$,} \\
                \frac{ 2pa c^{\frac{2 p}{1-p}}}{2p a+p-1}  \quad &\text{when $\frac{1-p}{2 p}<a<1$} .
            \end{cases} \nonumber
        \end{align}
        \item The quantity $\sum_{t=0}^T \frac{1}{\gamma_t}$ is lower bounded as
        \begin{align}
            \sum_{t=0}^T \frac{1}{\gamma_t} \geq \frac{T^{1+a}}{c(1+a)} . \nonumber
        \end{align}
        \item The quantity $\sum_{t=0}^T \gamma_t^2$, for $0<a<\frac{1}{2}$, is lower bounded as
        \begin{align}
            \sum_{t=0}^T \gamma_t^2 \geq c^2 (T+1)^{1-2a} . \nonumber
        \end{align}
    \end{enumerate}
\end{lemma}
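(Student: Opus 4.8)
The plan is to prove Lemma~\ref{lem-step-bd2} by comparing each of the finite sums to a suitable integral, which is the standard device for bounding sums of the form $\sum_t (t+1)^{-b}$. For a decreasing positive function $f$, one has $\sum_{t=0}^T f(t+1) \ge \int_1^{T+1} f(s)\,ds$ when bounding below, and $\sum_{t=0}^T f(t+1) \le \int_0^{T+1} f(s)\,ds$ (or $f(1)+\int_1^{T+1}f(s)\,ds$) when bounding above. For an increasing positive function the inequalities reverse, and for parts iv) and v) the summand $\gamma_t^{-1}=c^{-1}(t+1)^a$ and $\gamma_t^2=c^2(t+1)^{-2a}$ with $2a<1$ are increasing and decreasing respectively, so I will pick the matching comparison in each case.

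I would handle the five parts in order. For part i), $\sum_{t=0}^T c(t+1)^{-a}\ge c\int_0^{T+1}(s+1)^{-a}ds$ — wait, more directly $\ge c\int_1^{T+2}s^{-a}ds = \frac{c}{1-a}((T+2)^{1-a}-1)$; I would then observe this is at least $c(T+1)^{1-a}$ for the stated bound (since $\frac{1}{1-a}\ge 1$ and $(T+2)^{1-a}-1\ge (T+1)^{1-a}$ can be checked, or one uses the cruder $\sum_{t=0}^T(t+1)^{-a}\ge (T+1)(T+1)^{-a}$ using that the largest index dominates). For parts ii) and iii), writing the exponent on $\gamma_t$ as $2/(1-p)$ (resp. $2p/(1-p)$), the summand is $c^{2/(1-p)}(t+1)^{-2a/(1-p)}$, and the integral $\int_0^{T+1}(s+1)^{-2a/(1-p)}ds$ splits into the three regimes according to whether the exponent $2a/(1-p)$ is less than, equal to, or greater than $1$ — equivalently $a \lessgtr (1-p)/2$ — giving a polynomial term $(T+2)^{(1-p-2a)/(1-p)}$, a logarithmic term, or a convergent constant respectively; the case analysis in iii) is identical after replacing $(1-p)/2$ with $(1-p)/(2p)$. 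For part iv), $\sum_{t=0}^T c^{-1}(t+1)^a \ge c^{-1}\int_0^{T}(s+1)^a ds = \frac{1}{c(1+a)}((T+1)^{1+a}-1)\ge \frac{T^{1+a}}{c(1+a)}$. For part v), with $0<a<1/2$, $\sum_{t=0}^T c^2(t+1)^{-2a}\ge c^2\int_0^{T+1}(s+1)^{-2a}ds$ is not quite what gives the clean bound; instead I would use the simpler observation that each of the $T+1$ terms is at least $c^2(T+1)^{-2a}$, so the sum is at least $c^2(T+1)^{1-2a}$.

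The routine part is carrying out the integral estimates and the elementary inequalities that pass from the integral value to the stated (slightly lossy) closed form; none of this is conceptually hard. The main thing to be careful about — the one place I expect to spend the most attention — is getting the direction of the sum-versus-integral comparison right in each case depending on monotonicity of the summand, and making sure the off-by-one index shifts (whether to integrate from $0$ to $T+1$, from $1$ to $T+2$, etc.) are chosen so that the resulting bound is genuinely $\le$ or $\ge$ as claimed and matches the clean right-hand sides with the constants $\frac{1-p}{1-p-2a}$ etc. For the upper bounds in ii) and iii) one must also confirm that adding the first term $f(1)$ (if one uses $\sum \le f(1)+\int_1^{T+1}$) is already absorbed, or better, use $\sum_{t=0}^T f(t+1)\le \int_0^{T+1}f(s+1)\,ds = \int_1^{T+2}f(u)\,du$ directly since $f$ is decreasing, which cleanly yields the $(T+2)$ inside the power/log. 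I will present each part as a short self-contained paragraph using these comparisons.
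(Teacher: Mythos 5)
Your overall strategy --- integral comparison for the upper bounds in parts ii)--iii) and for part iv), and the crude ``every term is at least the last term'' bound for parts i) and v) --- is exactly what the paper does, so the architecture is fine. Two of your specific steps, however, would not go through as written.

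First, the sum-versus-integral direction in parts ii)--iii). For a \emph{decreasing} summand $g(t)=f(t+1)$ one has $\sum_{t=0}^{T} g(t)\ \geq\ \int_{0}^{T+1} g(s)\,ds$, not $\leq$: on $[t,t+1]$ the function lies \emph{below} its left endpoint value, so the integral underestimates the sum. Hence your preferred route ``$\sum_{t=0}^T f(t+1)\le \int_0^{T+1}f(s+1)\,ds=\int_1^{T+2}f(u)\,du$ since $f$ is decreasing'' asserts the inequality backwards. (The paper's own proof writes the same reversed inequality, so you are in good company, but it is still the wrong direction.) The correct upper bound is $\sum_{k=1}^{T+1}f(k)\le f(1)+\int_1^{T+1}f(u)\,du$, which is the alternative you mention and then set aside. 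With that corrected comparison, the first regime $0<a<\frac{1-p}{2}$ does close (the $f(1)=1$ term is absorbed because $1-\frac{1}{1-b}\le 0$ for $b=\frac{2a}{1-p}<1$), but in the regimes $a=\frac{1-p}{2}$ and $a>\frac{1-p}{2}$ the leading term is \emph{not} absorbed: e.g.\ for $a=\frac{1-p}{2}$ the sum is $c^{2/(1-p)}\sum_{k=1}^{T+1}k^{-1}$, which already exceeds $c^{2/(1-p)}\ln(T+2)$ at $T=0$. So the ``confirm that $f(1)$ is absorbed'' check you flag is precisely where the argument breaks, and you should not paper over it by invoking the reversed integral inequality; the honest outcome is an extra additive constant (e.g.\ $1+\ln(T+1)$ in place of $\ln(T+2)$, and $\frac{b}{b-1}$ in place of $\frac{1}{b-1}$).

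Second, your primary route for part i) also fails at small $T$: the chain $\sum_{t=0}^T\gamma_t\ge \frac{c}{1-a}\bigl((T+2)^{1-a}-1\bigr)\ge c(T+1)^{1-a}$ requires $(T+2)^{1-a}-1\ge(1-a)(T+1)^{1-a}$, which is false at $T=0$ (take $a=\tfrac12$: $\sqrt2-1\approx0.414<\tfrac12$). Your fallback --- bounding each of the $T+1$ terms below by $c(T+1)^{-a}$ --- is the paper's actual argument and is what you should use; the same remark applies to part v), where you already chose the term-wise bound. Part iv) is fine: for the increasing summand the inequality $\sum_{t=0}^{T}(t+1)^a\ge\int_0^T(s+1)^a\,ds$ does hold, and $(T+1)^{1+a}-1\ge T^{1+a}$ (which the paper verifies by a short monotonicity argument) finishes it.
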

The proof of Lemma~\ref{lem-step-bd2} is provided in Appendix~\ref{lem-step-bd2-proof}.
%
%
Next, we provide some basic relations for the iterates of the  algorithm. We start with the well known three point lemma for Bregman diverence~\cite[Lemma 3.1]{chen1993convergence} that will be useful in the convergence analysis of the algorithm. 

%
%
\begin{lemma}[\cite{chen1993convergence, beck2017first}]\label{lem-3pt}
Let $\psi: X \rightarrow \R$ be a proper, convex, and differentiable function. Given any three points $x,y,z \in X$, we have
\begin{align}
    B_\psi(z,x) + B_\psi(x,y) - B_\psi(z,y) = \la \nabla \psi(y) - \nabla \psi(x), z-x \ra . \nonumber
\end{align}
\end{lemma}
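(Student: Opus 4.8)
\textbf{Proof plan for the three-point identity (Lemma~\ref{lem-3pt}).}

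The plan is to prove the identity by direct expansion of the definition~\eqref{breg-dist} of the Bregman divergence and collecting terms; no convexity or differentiability beyond the mere existence of $\nabla\psi$ is actually needed, since the statement is an algebraic identity. First I would write out each of the three divergences appearing on the left-hand side:
\begin{align}
B_\psi(z,x) &= \psi(z) - \psi(x) - \la \nabla\psi(x), z-x\ra, \nonumber\\
B_\psi(x,y) &= \psi(x) - \psi(y) - \la \nabla\psi(y), x-y\ra, \nonumber\\
B_\psi(z,y) &= \psi(z) - \psi(y) - \la \nabla\psi(y), z-y\ra. \nonumber
\end{align}

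Next I would form the combination $B_\psi(z,x) + B_\psi(x,y) - B_\psi(z,y)$ and observe that all pure $\psi$-terms cancel: $\psi(z)$ appears in $B_\psi(z,x)$ with a $+$ and in $B_\psi(z,y)$ with a $-$; $\psi(x)$ appears with a $-$ in $B_\psi(z,x)$ and a $+$ in $B_\psi(x,y)$; and $\psi(y)$ appears with a $-$ in $B_\psi(x,y)$ and a $+$ in $B_\psi(z,y)$. Hence the left-hand side reduces to the sum of the linear (inner-product) terms:
\begin{align}
-\la \nabla\psi(x), z-x\ra - \la \nabla\psi(y), x-y\ra + \la \nabla\psi(y), z-y\ra. \nonumber
\end{align}

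Finally I would simplify the two terms carrying $\nabla\psi(y)$ using linearity of the inner product: $\la \nabla\psi(y), z-y\ra - \la \nabla\psi(y), x-y\ra = \la \nabla\psi(y), (z-y)-(x-y)\ra = \la \nabla\psi(y), z-x\ra$. Combining this with the remaining term $-\la \nabla\psi(x), z-x\ra$ yields $\la \nabla\psi(y) - \nabla\psi(x), z-x\ra$, which is exactly the right-hand side. There is essentially no obstacle here — the only thing to be careful about is bookkeeping the signs of the $\psi$-values and correctly grouping the $\nabla\psi(y)$ terms over the common factor $z-x$; everything else is immediate from the definition.
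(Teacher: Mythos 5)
Your proof is correct: the algebra checks out, all three $\psi$-value terms cancel, and grouping the two $\nabla\psi(y)$ inner products over the common factor $z-x$ gives exactly the stated right-hand side. The paper itself does not prove this lemma but cites it as a known result from the literature, and your direct expansion of the definition~\eqref{breg-dist} is precisely the standard argument found in those references; you are also right that convexity plays no role here, since the identity is purely algebraic.
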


Using Lemma~\ref{lem-3pt}, we establish a basic relation for the iterates generated by the stochastic Popov mirror-prox algorithm. In what follows, for notational convenience, we use $b_t$ and $\e_{t}$ to define the sample-error and its norm respectively, 
\begin{align}
    b_t = F(y_t) - \widehat F(y_t, \xi_{t}) , \qquad \epsilon_t = \| b_t \|_* . \label{epsilon-def}
\end{align}
We also use the following relation, which is an immediate consequence of the definition of the iterates $y_{t+1}$ and $x_{t+1}$ of the stochastic Popov mirror-prox algorithm in~\eqref{stoc-pov1}--\eqref{stoc-pov2}
and  the optimality principle:
\begin{align}
    &\ip{\gamma_t \widehat F(y_t, \xi_{t}) - \nabla \psi(x_t) + \nabla \psi(y_{t+1})}{z-y_{t+1}} \geq 0 , \quad \forall z \in X , \label{opt_y_pop}\\
    &\ip{\gamma_t \widehat F(y_{t+1}, \xi_{t+1}) - \nabla \psi(x_t) + \nabla \psi(x_{t+1})}{z-x_{t+1}} \geq 0 , \quad \forall z \in X . \label{opt_x_pop}
\end{align}

\begin{lemma}\label{lem-gen-stoc}
    Let Assumption~\ref{asum-set1} and Assumption~\ref{asum-holder}, with $\nu>0$, hold. Then, 
    for the iterates of the stochastic Popov mirror-prox, we have for any $z \in X$,
    \begin{align}
       &B_{\psi}(z,x_{t+1}) \leq B_{\psi}(z,x_t) - \left(1- \frac{2 w_1+w_2+w_3+w_4}{\alpha} \right) B_{\psi}(x_{t+1},y_{t+1}) \nonumber\\
       &+\frac{\gamma_t^2 L_{\nu}^2}{2 w_2} \left(\frac{2}{\alpha} B_\psi(x_t,y_t) \right)^\nu - B_{\psi}(y_{t+1},x_{t}) + \gamma_t \la F(y_{t+1}), z - y_{t+1}\ra \nonumber\\
    & + \gamma_t \la b_{t+1}, y_{t+1} - z \ra + \frac{\gamma_t^2 L_{\nu}^2}{2 w_3} \left(\frac{2}{\alpha} B_\psi(y_{t+1},x_t) \right)^\nu + \frac{\gamma_t^2}{2 w_1} (\epsilon_{t}^2 + \epsilon_{t+1}^2 ) + \frac{2 \gamma_t^2 M_{\nu}^2}{w_4} , \nonumber
   \end{align}
   where $w_1$, $w_2$, $w_3$, and $w_4$ are some positive constants, $\gamma_t > 0$ is the step size, and $b_{t+1}$, $\epsilon_{t}$ and $\epsilon_{t+1}$ are the errors as defined in~\eqref{epsilon-def} . 
   If Assumption~\ref{asum-holder} holds with $\nu=0$ instead, we then have: for any $z \in X$,
   \begin{align}
        B_{\psi}(z,x_{t+1}) \leq &B_{\psi}(z,x_t) - \left(1 - \frac{2w_1 + w_2}{\alpha} \right) B_{\psi}(x_{t+1},y_{t+1}) - B_{\psi}(y_{t+1},x_{t}) \nonumber\\
    & + \gamma_t \la F(y_{t+1}), z - y_{t+1} \ra + \gamma_t \la b_{t+1}, y_{t+1} - z \ra \nonumber\\
    &+\frac{\gamma_t^2}{2 w_1} (\epsilon_{t}^2 + \epsilon_{t+1}^2 ) + \frac{\gamma_t^2 (L_0+M_0)^2}{2 w_2}. \nonumber
    \end{align}
\end{lemma}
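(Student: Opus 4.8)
The plan is to establish the recursion by tracking the Bregman divergence $B_\psi(z,x_{t+1})$ through the two proximal updates. First I would apply the three-point lemma (Lemma~\ref{lem-3pt}) in a form adapted to the mirror step: using the optimality condition~\eqref{opt_x_pop} with the test point $z$, combined with the identity $\ip{\nabla\psi(x_t)-\nabla\psi(x_{t+1})}{z-x_{t+1}} = B_\psi(z,x_t) - B_\psi(z,x_{t+1}) - B_\psi(x_{t+1},x_t)$, to obtain
\begin{align}
    B_\psi(z,x_{t+1}) \leq B_\psi(z,x_t) - B_\psi(x_{t+1},x_t) + \gamma_t \ip{\widehat F(y_{t+1},\xi_{t+1})}{z - x_{t+1}}. \nonumber
\end{align}
The inner product term is the one that needs work: I would split $z - x_{t+1} = (z - y_{t+1}) + (y_{t+1} - x_{t+1})$, so that the first piece produces the desired $\gamma_t\ip{F(y_{t+1})}{z-y_{t+1}}$ term (plus the noise term $\gamma_t\ip{b_{t+1}}{y_{t+1}-z}$ after substituting $\widehat F(y_{t+1},\xi_{t+1}) = F(y_{t+1}) - b_{t+1}$), and the second piece $\gamma_t\ip{\widehat F(y_{t+1},\xi_{t+1})}{y_{t+1}-x_{t+1}}$ must be absorbed.

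Next I would handle $\gamma_t\ip{\widehat F(y_{t+1},\xi_{t+1})}{y_{t+1}-x_{t+1}}$ by inserting and subtracting $\widehat F(y_t,\xi_t)$: write it as $\gamma_t\ip{\widehat F(y_{t+1},\xi_{t+1}) - \widehat F(y_t,\xi_t)}{y_{t+1}-x_{t+1}} + \gamma_t\ip{\widehat F(y_t,\xi_t)}{y_{t+1}-x_{t+1}}$. For the second summand here, use the optimality condition~\eqref{opt_y_pop} with test point $z = x_{t+1}$, which gives $\gamma_t\ip{\widehat F(y_t,\xi_t)}{y_{t+1}-x_{t+1}} \leq \ip{\nabla\psi(x_t)-\nabla\psi(y_{t+1})}{y_{t+1}-x_{t+1}} = B_\psi(x_{t+1},x_t) - B_\psi(x_{t+1},y_{t+1}) - B_\psi(y_{t+1},x_t)$ via the three-point lemma again; the $+B_\psi(x_{t+1},x_t)$ here cancels the $-B_\psi(x_{t+1},x_t)$ from the first step, and we keep $-B_\psi(x_{t+1},y_{t+1})$ and $-B_\psi(y_{t+1},x_t)$. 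For the first summand, the deviation $\|\widehat F(y_{t+1},\xi_{t+1}) - \widehat F(y_t,\xi_t)\|_*$ is bounded by $\|b_{t+1}\|_* + \|b_t\|_* + \|F(y_{t+1}) - F(y_t)\|_* \leq \epsilon_{t+1} + \epsilon_t + L_\nu\|y_{t+1}-y_t\|^\nu + M_\nu$ using Assumption~\ref{asum-holder}, and then $\|y_{t+1}-y_t\|^\nu$ is controlled by further splitting $\|y_{t+1}-y_t\| \leq \|y_{t+1}-x_t\| + \|x_t - y_t\|$ and using the lower bound~\eqref{breg-lb} to write $\|x_t-y_t\| \leq \sqrt{(2/\alpha)B_\psi(x_t,y_t)}$ and similarly for $\|y_{t+1}-x_t\|$. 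Each resulting cross term of the form $\gamma_t \cdot(\text{deviation})\cdot\|y_{t+1}-x_{t+1}\|$ gets Young's inequality applied with a separate weight $w_i$: the $\epsilon$ terms go to the $w_1$ slot, the $L_\nu$ term involving $B_\psi(x_t,y_t)$ goes to the $w_2$ slot, the $L_\nu$ term involving $B_\psi(y_{t+1},x_t)$ goes to the $w_3$ slot, and the $M_\nu$ constant goes to the $w_4$ slot, with the $\|y_{t+1}-x_{t+1}\|^2$ half of each Young split being converted to $(2/\alpha)B_\psi(x_{t+1},y_{t+1})$ via~\eqref{breg-lb}, producing the coefficient $(2w_1+w_2+w_3+w_4)/\alpha$ on $B_\psi(x_{t+1},y_{t+1})$.

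The main obstacle I anticipate is the careful bookkeeping in bounding $\|y_{t+1}-y_t\|^\nu$: because $\nu$ can be any nonnegative number, one cannot simply use $(a+b)^\nu \le a^\nu + b^\nu$ (valid only for $\nu \le 1$); instead I would absorb the cross term $\gamma_t\|y_{t+1}-y_t\|^\nu\|y_{t+1}-x_{t+1}\|$ using Young's inequality with conjugate exponents chosen so that the $\|y_{t+1}-x_{t+1}\|$ factor carries exponent $2$ (to be absorbed into $B_\psi(x_{t+1},y_{t+1})$) while $\|y_{t+1}-y_t\|$ ends up with exponent $2\nu$, and then $\|y_{t+1}-y_t\|^{2\nu} \le (\|y_{t+1}-x_t\| + \|x_t-y_t\|)^{2\nu}$ must be split — here one does get to use convexity/subadditivity arguments on each piece with its own weight since the two Bregman-divergence terms $B_\psi(x_t,y_t)$ and $B_\psi(y_{t+1},x_t)$ appear separately in the statement (raised to power $\nu$), which is consistent with having separated the two distance contributions before raising to the power. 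The $\nu = 0$ case is simpler: there is no $\|y_{t+1}-y_t\|^\nu$ dependence at all, the mapping deviation is bounded by the constant $L_0 + M_0$, only weights $w_1$ and $w_2$ are needed, and the recursion follows by the same two applications of the three-point lemma with a single Young's inequality for the $\gamma_t(L_0+M_0)\|y_{t+1}-x_{t+1}\|$ term. Throughout, I would keep the terms $-B_\psi(y_{t+1},x_t)$ (from the $w$-free part of the second optimality-condition application) on the favorable side, matching the statement, and collect the noise contributions into $\gamma_t\ip{b_{t+1}}{y_{t+1}-z}$ and $\frac{\gamma_t^2}{2w_1}(\epsilon_t^2 + \epsilon_{t+1}^2)$.
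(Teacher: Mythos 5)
Your overall architecture coincides with the paper's: two applications of the three-point lemma (one for the $x$-update, one to expand $B_\psi(x_{t+1},x_t)$ through $y_{t+1}$), the two optimality conditions \eqref{opt_y_pop}--\eqref{opt_x_pop}, the insertion of $\widehat F(y_{t+1},\xi_{t+1})$ to isolate $\gamma_t\la \widehat F(y_t,\xi_t)-\widehat F(y_{t+1},\xi_{t+1}), x_{t+1}-y_{t+1}\ra$, Cauchy--Schwarz, Young's inequality with separate weights $w_1,\dots,w_4$, and the strong-convexity bound \eqref{breg-lb} to convert norms into Bregman terms with the coefficient $(2w_1+w_2+w_3+w_4)/\alpha$. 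The $\nu=0$ branch is also handled exactly as in the paper.

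The one genuine gap is in how you bound $\|F(y_t)-F(y_{t+1})\|_*$ for $\nu>0$. You apply Assumption~\ref{asum-holder} directly to the pair $(y_t,y_{t+1})$, obtaining $L_\nu\|y_{t+1}-y_t\|^\nu+M_\nu$, and then must control $\|y_{t+1}-y_t\|^{2\nu}\le(\|y_{t+1}-x_t\|+\|x_t-y_t\|)^{2\nu}$ after the Young split. You correctly flag that subadditivity of $s\mapsto s^{2\nu}$ fails once $2\nu>1$, but the weighted-convexity repair you sketch cannot close this: for $2\nu\ge 1$ the best one can do is $(a+b)^{2\nu}\le \lambda^{1-2\nu}a^{2\nu}+(1-\lambda)^{1-2\nu}b^{2\nu}$, so at least one of the two coefficients strictly exceeds $1$ (with $\lambda=1/2$ both pick up a factor $2^{2\nu-1}$), and the lemma's stated constants $\frac{\gamma_t^2L_\nu^2}{2w_2}\bigl(\frac{2}{\alpha}B_\psi(x_t,y_t)\bigr)^\nu$ and $\frac{\gamma_t^2L_\nu^2}{2w_3}\bigl(\frac{2}{\alpha}B_\psi(y_{t+1},x_t)\bigr)^\nu$ are not recovered; the loss worsens as $\nu$ grows, and the lemma is claimed for all $\nu\in(0,\infty)$. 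The paper avoids the issue entirely by splitting \emph{before} invoking the growth condition: insert $F(x_t)$ and use the triangle inequality on mapping values,
\begin{align}
\|F(y_t)-F(y_{t+1})\|_*\le \|F(y_t)-F(x_t)\|_*+\|F(y_{t+1})-F(x_t)\|_*\le L_\nu\|y_t-x_t\|^\nu+L_\nu\|y_{t+1}-x_t\|^\nu+2M_\nu,\nonumber
\end{align}
so that each distance is raised to the power $\nu$ separately and maps cleanly onto $B_\psi(x_t,y_t)^\nu$ and $B_\psi(y_{t+1},x_t)^\nu$ with the exact weights $w_2$, $w_3$ (and $2M_\nu$ yields the stated $2\gamma_t^2M_\nu^2/w_4$ term). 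Replacing your step with this one makes the proof go through verbatim.
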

\begin{proof}
    By using Lemma~\ref{lem-3pt} with $x=z_{t+1}$ and $y=x_t$, we obtain for any $z\in X$,
    \begin{align}
        B_{\psi}(z,x_{t+1}) &= B_{\psi}(z,x_t) - B_{\psi}(x_{t+1},x_{t}) + \la \nabla \psi(x_{t+1}) - \nabla \psi(x_t), x_{t+1} - z \ra . \nonumber
    \end{align}
    Applying Lemma~\ref{lem-3pt} again with $x=y_{t+1}$, $y=x_t$ and $z=x_{t+1}$ for the quantity $B_{\psi}(x_{t+1},x_{t})$ of the preceding relation, we obtain
    \begin{align}
        &B_{\psi}(z,x_{t+1}) = B_{\psi}(z,x_t) - B_{\psi}(x_{t+1},y_{t+1}) - B_{\psi}(y_{t+1},x_{t}) \nonumber\\
        & - \la \nabla \psi(y_{t+1}) - \nabla \psi(x_t), x_{t+1} - y_{t+1} \ra - \la \nabla \psi(x_{t+1}) - \nabla \psi(x_t), z - x_{t+1} \ra . \label{thm-conv-eq1}
    \end{align}
    The fourth and the fifth terms on the right hand side of \eqref{thm-conv-eq1} can estimated using the optimality conditions \eqref{opt_y_pop} and \eqref{opt_x_pop}, respectively, as follows:
    \begin{align}
        & - \la \nabla \psi(y_{t+1}) - \nabla \psi(x_t), x_{t+1} - y_{t+1} \ra \leq \gamma_t \la \widehat F(y_t, \xi_{t}), x_{t+1} - y_{t+1} \ra \nonumber\\
        & - \la \nabla \psi(x_{t+1}) - \nabla \psi(x_t), z - x_{t+1} \ra \leq \gamma_t \la \widehat F(y_{t+1}, \xi_{t+1}), z -x_{t+1} \ra . \nonumber
    \end{align}
    Using the preceding two inequalities in relation \eqref{thm-conv-eq1}, we have
    \begin{align}
        B_{\psi}(z,x_{t+1}) &\leq B_{\psi}(z,x_t) - B_{\psi}(x_{t+1},y_{t+1}) - B_{\psi}(y_{t+1},x_{t}) \nonumber\\
        &+ \gamma_t \la \widehat F(y_t, \xi_{t}), x_{t+1} - y_{t+1} \ra + \gamma_t \la \widehat F(y_{t+1}, \xi_{t+1}), z -x_{t+1} \ra . \nonumber
    \end{align}
Next, we add and subtract the quantity $\gamma_t \la \widehat F(y_{t+1}, \xi_{t+1}), x_{t+1} - y_{t+1} \ra$ on the right hand side of the preceding relation to obtain
\begin{align}\label{holder-eq1}
    &B_{\psi}(z,x_{t+1}) \leq B_{\psi}(z,x_t) - B_{\psi}(x_{t+1},y_{t+1}) - B_{\psi}(y_{t+1},x_{t}) \\
    &\ + \gamma_t \la \widehat F(y_t, \xi_{t}) - \widehat F(y_{t+1}, \xi_{t+1}), x_{t+1} - y_{t+1} \ra + \gamma_t \la \widehat F(y_{t+1}, \xi_{t+1}), z - y_{t+1} \ra . \nonumber
\end{align}
The last term on the right hand side of relation~\eqref{holder-eq1} can be written as
\begin{align}
    \gamma_t \la \widehat F(y_{t+1}, \xi_{t+1}), z - y_{t+1} \ra = \gamma_t \la F(y_{t+1}), z - y_{t+1} \ra + \gamma_t \la b_{t+1}, y_{t+1} - z \ra , \nonumber
\end{align}
where the quantity $b_{t+1} = F(y_{t+1}) - \widehat F(y_{t+1}, \xi_{t+1})$, as per definition~\eqref{epsilon-def}. The preceding relation when used in \eqref{holder-eq1} yields
\begin{align}
    & B_{\psi}(z,x_{t+1}) \leq B_{\psi}(z,x_t) \!-\! B_{\psi}(x_{t+1},y_{t+1}) \!-\! B_{\psi}(y_{t+1},x_{t}) + \gamma_t \la F(y_{t+1}), z \!-\! y_{t+1} \ra  \nonumber\\
    & \ + \gamma_t \la \widehat F(y_t, \xi_{t}) - \widehat F(y_{t+1}, \xi_{t+1}), x_{t+1} - y_{t+1} \ra + \gamma_t \la b_{t+1}, y_{t+1} - z \ra . \label{holder-eq1-next}
\end{align}
For the term $\gamma_t \la \widehat F(y_t, \xi_{t}) - \widehat F(y_{t+1}, \xi_{t+1}), x_{t+1} - y_{t+1} \ra$ of relation~\eqref{holder-eq1-next}, we have
\begin{align}
    &\gamma_t \la \widehat F(y_t, \xi_{t}) - \widehat F(y_{t+1}, \xi_{t+1}), x_{t+1} - y_{t+1} \ra = \gamma_t \la(F(y_t) - F(y_{t+1})) , x_{t+1} - y_{t+1} \ra \nonumber\\
        &\qquad+\gamma_t \la (\widehat F(y_t, \xi_{t}) - F(y_t)) - (\widehat F(y_{t+1}, \xi_{t+1})- F(y_{t+1})), x_{t+1} - y_{t+1} \ra . \nonumber
\end{align}
Next, using the Cauchy-Schwarz inequality in the preceding relation we obtain
    \begin{align}
        &\gamma_t \la \widehat F(y_t, \xi_{t}) - \widehat F(y_{t+1}, \xi_{t+1}), x_{t+1} - y_{t+1} \ra \leq \gamma_t \Big( \|\widehat F(y_t, \xi_{t}) - F(y_t)\|_* \nonumber\\
         &+ \|\widehat F(y_{t+1}, \xi_{t+1}) - F(y_{t+1})\|_* + \|F(y_t) - F(y_{t+1})\|_* \Big) \|x_{t+1} - y_{t+1}\| . \label{holder-eq3}
    \end{align}
    Using the definitions of $\epsilon_{t}$ and $\epsilon_{t+1}$ (see~\eqref{epsilon-def}), and using Young's inequality, we further have
    \begin{align}\label{holder-eq4}
        &\gamma_t \la \widehat F(y_t, \xi_{t}) - \widehat F(y_{t+1}, \xi_{t+1}), x_{t+1} - y_{t+1} \ra \\
        &\leq \frac{\gamma_t^2}{2 w_1} (\epsilon_{t}^2 + \epsilon_{t+1}^2 ) + w_1 \|x_{t+1} - y_{t+1}\|^2 + \gamma_t \|F(y_t) - F(y_{t+1})\|_* \|x_{t+1} - y_{t+1}\| , \nonumber
    \end{align}
    where $w_1$ is a positive constant, and we use  Assumption~\ref{asum-holder} to estimate the term $\|F(y_t) - F(y_{t+1})\|_*$. Now, we consider the cases $\nu>0$ and $\nu=0$, separately.
    
    \noindent {\it Case $\nu>0$:}
    To estimate the term $\|F(y_t) - F(y_{t+1})\|_*$ on the right hand side of~\eqref{holder-eq4}, we add and subtract $F(x_t)$, and use triangle inequality, thus giving us
    \begin{align*}
    \|F(y_t) - F(y_{t+1})\|_* 
    &\le \|F(y_t) - F(x_t)\|_* + \|F(y_{t+1})-F(x_t)\|_*\cr
    &\le L_\nu \|y_t-x_t\|^{\nu} + L_\nu \|y_{t+1}-x_t\|^{\nu} + 2M_\nu,    
    \end{align*}
    where in the last inequality we use Assumption~\ref{asum-holder}. Using the preceding relation and 
    Young's inequality, we have that
\begin{align}
        \gamma_t \|F(y_t) - F(y_{t+1})\|_* &\|x_{t+1} - y_{t+1}\| \leq \frac{\gamma_t^2 L_{\nu}^2}{2 w_2} \|y_t-x_t\|^{2\nu} +  \frac{\gamma_t^2 L_{\nu}^2}{2 w_3} \|y_{t+1}-x_t\|^{2 \nu} \nonumber\\
        &+ \frac{2 \gamma_t^2 M_{\nu}^2}{w_4} + \frac{w_2+w_3+w_4}{2} \, \|x_{t+1} - y_{t+1}\|^2, \nonumber
    \end{align}
    where $w_2$, $w_3$ and $w_4$ are positive constants. 
By the strong convexity of the Bregman distance (see~\eqref{breg-lb}),  it follows that 
    \begin{align}\label{holder-eq5}
        &\gamma_t \|F(y_t) - F(y_{t+1})\|_* \|x_{t+1} - y_{t+1}\| \leq \frac{\gamma_t^2 L_{\nu}^2}{2 w_2} \left(\frac{2}{\alpha} B_\psi(x_t,y_t) \right)^{\nu} \\
        &+ \frac{\gamma_t^2 L_{\nu}^2}{2 w_3} \left(\frac{2}{\alpha} B_\psi(y_{t+1},x_t) \right)^{\nu} + \frac{w_2+w_3+w_4}{\alpha} B_\psi(x_{t+1},y_{t+1}) + \frac{2 \gamma_t^2 M_{\nu}^2}{w_4} . \nonumber
    \end{align}
    Combining relations \eqref{holder-eq4} and \eqref{holder-eq5} with relation~\eqref{holder-eq1-next}, we obtain
    \begin{align*}
        &B_{\psi}(z,x_{t+1}) \leq B_{\psi}(z,x_t) - \left(1- \frac{w_2+w_3+w_4}{\alpha} \right) B_{\psi}(x_{t+1},y_{t+1}) \nonumber\\
    &+ w_1 \|x_{t+1} - y_{t+1}\|^2 - B_{\psi}(y_{t+1},x_{t}) + \gamma_t \la F(y_{t+1}), z - y_{t+1} \ra \nonumber\\
    & + \gamma_t \la b_{t+1}, y_{t+1} - z \ra +\frac{\gamma_t^2 L_{\nu}^2}{2 w_2} \left(\frac{2}{\alpha} B_\psi(x_t,y_t) \right)^\nu + \frac{\gamma_t^2 L_{\nu}^2}{2 w_3} \left(\frac{2}{\alpha} B_\psi(y_{t+1},x_t) \right)^\nu \nonumber\\
    &+ \frac{\gamma_t^2}{2 w_1} (\epsilon_{t}^2 + \epsilon_{t+1}^2 ) + \frac{2 \gamma_t^2 M_{\nu}^2}{w_4} . 
    \end{align*}
   The norm term on the right hand side of the preceding inequality can be bounded from above using the strong convexity of the Bregman divergence, which finally gives us the first relation of the lemma.

\noindent{\it Case $\nu=0$:}
   To obtain the second relation of the lemma, we use Assumption~\ref{asum-holder} for the last term on the right hand side of relation~\eqref{holder-eq4} and, thus, obtain
    \begin{align}
        \gamma_t \|F(y_t) - F(y_{t+1})\|_* \|x_{t+1} - y_{t+1}\| &\leq \gamma_t (L_0+M_0) \|x_{t+1} - y_{t+1}\| \nonumber\\
        &\leq \frac{\gamma_t^2 (L_0+M_0)^2}{2 w_2} + \frac{w_2}{2} \|x_{t+1} - y_{t+1}\|^2 . \nonumber
    \end{align}
    We combine the preceding estimate with relation~\eqref{holder-eq4}. Then, we follow the same line of analysis as in the case $\nu >0$ and obtain the stated relation. \hfill$\square$
\end{proof}

Lemma~\ref{lem-gen-stoc} is the key for the convergence rate analysis of the algorithm. It will be repeatedly used with  suitable selection of the step size $\gamma_t$ and the positive constants $w_1$, $w_2$, $w_3$ and $w_4$.

Next, we provide a lemma that will be useful for the convergence analysis of the stochastic version of our algorithm.
\begin{lemma}[\cite{juditsky2011solving}]\label{lem-juditsky}
    Let the sequence $\{h_t\}_{t=0}^T$, for any $T \geq 1$, be defined as
    \begin{align*}
        h_{t+1} = \argmin_{z \in X} [\la \gamma_{t}  b_{t+1} - \nabla \psi(h_{t}), z \ra + \psi(z)] ,\qquad \forall t=0,1,\ldots, T-1,
    \end{align*}
    with $h_0 \in X$. Then, the following relation is satisfied for all $t=0,1,\ldots, T-1$,
    \begin{align*}
        \gamma_t \la b_{t+1}, h_{t} - z \ra \leq B_\psi(z,h_{t}) - B_\psi(z,h_{t+1}) + \frac{\gamma_t^2 \e_{t+1}^2}{2 \alpha} ,
    \end{align*}
    where $b_t$ and $\e_t$ are defined in relation~\eqref{epsilon-def}.
\end{lemma}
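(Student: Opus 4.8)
The plan is to mirror the structure of the classical argument for the mirror-prox / dual-averaging recursion (cf.\ the proof of Lemma~\ref{lem-3pt}-type estimates in~\cite{juditsky2011solving, nemirovski2004prox}) applied to the auxiliary sequence $\{h_t\}$. First I would invoke the optimality condition for the subproblem defining $h_{t+1}$: since $h_{t+1} = \argmin_{z\in X}[\la \gamma_t b_{t+1} - \nabla\psi(h_t), z\ra + \psi(z)]$ and the objective is convex and differentiable, the first-order optimality principle gives
\begin{align*}
    \la \gamma_t b_{t+1} - \nabla\psi(h_t) + \nabla\psi(h_{t+1}), z - h_{t+1}\ra \geq 0, \qquad \forall z\in X,
\end{align*}
which rearranges to $\la \gamma_t b_{t+1}, h_{t+1} - z\ra \leq \la \nabla\psi(h_t) - \nabla\psi(h_{t+1}), h_{t+1} - z\ra$.

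Next I would apply the three-point identity of Lemma~\ref{lem-3pt} with the substitution $(x,y,z)\mapsto(h_{t+1}, h_t, z)$, namely $B_\psi(z,h_{t+1}) + B_\psi(h_{t+1},h_t) - B_\psi(z,h_t) = \la \nabla\psi(h_t) - \nabla\psi(h_{t+1}), z - h_{t+1}\ra$. Combining this with the optimality inequality above yields
\begin{align*}
    \gamma_t \la b_{t+1}, h_{t+1} - z\ra \leq B_\psi(z,h_t) - B_\psi(z,h_{t+1}) - B_\psi(h_{t+1},h_t).
\end{align*}
Then I would split $\la b_{t+1}, h_t - z\ra = \la b_{t+1}, h_t - h_{t+1}\ra + \la b_{t+1}, h_{t+1} - z\ra$, bound the first piece by Cauchy--Schwarz as $\gamma_t\la b_{t+1}, h_t - h_{t+1}\ra \leq \gamma_t \e_{t+1}\,\|h_t - h_{t+1}\|$, and apply Young's inequality in the form $\gamma_t \e_{t+1}\|h_t-h_{t+1}\| \leq \frac{\gamma_t^2 \e_{t+1}^2}{2\alpha} + \frac{\alpha}{2}\|h_t-h_{t+1}\|^2$. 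Finally, the term $\frac{\alpha}{2}\|h_t - h_{t+1}\|^2$ is absorbed by $-B_\psi(h_{t+1},h_t)$ using the strong-convexity lower bound~\eqref{breg-lb}, i.e.\ $B_\psi(h_{t+1},h_t)\geq \frac{\alpha}{2}\|h_{t+1}-h_t\|^2$, so that $\frac{\alpha}{2}\|h_t-h_{t+1}\|^2 - B_\psi(h_{t+1},h_t) \leq 0$. Putting the pieces together gives $\gamma_t\la b_{t+1}, h_t - z\ra \leq B_\psi(z,h_t) - B_\psi(z,h_{t+1}) + \frac{\gamma_t^2 \e_{t+1}^2}{2\alpha}$, which is the claim.

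The argument is essentially routine, so there is no serious obstacle; the only point requiring minor care is the bookkeeping in the Young's-inequality split — choosing the coefficient so that exactly $\frac{\alpha}{2}\|h_t-h_{t+1}\|^2$ appears and is exactly cancelled by the $-B_\psi(h_{t+1},h_t)$ term coming from the three-point identity, leaving the clean remainder $\frac{\gamma_t^2\e_{t+1}^2}{2\alpha}$ with no leftover constants. Since this is quoted verbatim from~\cite{juditsky2011solving}, the proof amounts to reproducing that derivation in the present notation.
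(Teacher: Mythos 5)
Your proposal is correct and follows essentially the same route as the paper's proof in Appendix~\ref{lem-juditsky-proof}: the optimality condition for $h_{t+1}$, the three-point identity of Lemma~\ref{lem-3pt} with $(x,y)=(h_{t+1},h_t)$, the split of $\la b_{t+1}, h_t - z\ra$ through $h_{t+1}$, Young's inequality with coefficient $\alpha$, and absorption of $\frac{\alpha}{2}\|h_t-h_{t+1}\|^2$ into $-B_\psi(h_{t+1},h_t)$ via~\eqref{breg-lb}. The only difference is cosmetic ordering of the steps, so nothing further is needed.
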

The proof of Lemma~\ref{lem-juditsky} can be found in~\cite[Lemma 3 and Corollary 2]{juditsky2011solving}. For the sake of completeness, we provide our version of the proof in Appendix~\ref{lem-juditsky-proof}, which uses Lemma~\ref{lem-3pt} and the optimality conditions for the iterates $\{h_{t}\}_{t=0}^T$.

\section{Convergence Rates of the Algorithm}\label{sec:stochastic_popov_conv}
In this section, we deal with a monotone mapping $F$.
\begin{assumption}\label{asum-monotone}
    The mapping $F: X \rightarrow \R^n$ is monotone over the set $X$, i.e.,
    \begin{align}
        \la F(x) - F(y), x-y \ra \geq 0 , \qquad \forall x,y \in X . \nonumber
    \end{align}
\end{assumption}

%
%

%
%
Using Lemma~\ref{lem-juditsky}, for a monotone mapping  
 we have a refinement of Lemma~\ref{lem-gen-stoc}.

\begin{lemma}\label{lem-error-control}
Let Assumptions~\ref{asum-set1}--\ref{asum-monotone} hold. Then, for the iterates of the stochastic Popov mirror-prox method for the case when $\nu>0$, we have
for any step size $\gamma_t>0$, $t\ge0$, and $z\in X$,
\begin{align*}
    &\gamma_t \left\la F(z), y_{t+1} - z \right\ra \leq B_{\psi}(z,x_t) - B_{\psi}(z,x_{t+1}) + B_\psi(z,h_{t}) - B_\psi(z,h_{t+1}) \cr 
       & +\frac{2\gamma_t^2 L_{\nu}^2}{\alpha} \left(\frac{2}{\alpha} B_\psi(x_t,y_t) \right)^\nu + \gamma_t \left\la b_{t+1}, y_{t+1} - h_t \right\ra \cr
    &+ \frac{2\gamma_t^2 L_{\nu}^2}{\alpha} \left(\frac{2}{\alpha} B_\psi(y_{t+1},x_t) \right)^\nu + \frac{\gamma_t^2}{\alpha} \left(4\epsilon_{t}^2 + \frac{9}{2}\epsilon_{t+1}^2 \right) + \frac{8 \gamma_t^2 M_{\nu}^2}{\alpha},
\end{align*}
where $b_t$ and $\e_t$ are defined in relation~\eqref{epsilon-def}, and $h_t$ is defined in Lemma~\ref{lem-juditsky}. When $\nu = 0$, the following relation holds
\begin{align*}
    &\gamma_t \left\la F(z), y_{t+1} - z \right\ra \leq B_{\psi}(z,x_t) - B_{\psi}(z,x_{t+1}) + B_\psi(z,h_{t}) - B_\psi(z,h_{t+1}) \cr 
    & + \gamma_t \left\la b_{t+1}, y_{t+1} - h_t \right\ra 
    +\frac{\gamma_t^2}{\alpha} \left(2\epsilon_{y_t}^2 + \frac{5}{2}\epsilon_{y_{t+1}}^2 \right) + \frac{\gamma_t^2 (L_0+M_0)^2}{\alpha} .
\end{align*}
\end{lemma}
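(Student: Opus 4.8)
The plan is to start from Lemma~\ref{lem-gen-stoc} and augment it using Lemma~\ref{lem-juditsky} to control the stochastic error term $\gamma_t \la b_{t+1}, y_{t+1} - z\ra$. The central idea is that the iterate $z$ in the inequality of Lemma~\ref{lem-gen-stoc} can be replaced by a monotone-mapping solution point, and then monotonicity of $F$ allows us to replace $\la F(y_{t+1}), y_{t+1} - z\ra$ by $\la F(z), y_{t+1} - z\ra$ at the cost of a nonnegative term that we discard. Concretely, since $\la F(y_{t+1}) - F(z), y_{t+1} - z\ra \ge 0$ by Assumption~\ref{asum-monotone}, we have $\gamma_t \la F(y_{t+1}), z - y_{t+1}\ra \le \gamma_t \la F(z), z - y_{t+1}\ra$, i.e. $\gamma_t \la F(z), y_{t+1} - z\ra \le \gamma_t \la F(y_{t+1}), y_{t+1} - z\ra$, which combined with the sign convention in Lemma~\ref{lem-gen-stoc} lets us move $\la F(z),\cdot\ra$ to the left side.

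Next I would handle the noise term. Split $\gamma_t \la b_{t+1}, y_{t+1} - z\ra = \gamma_t\la b_{t+1}, y_{t+1} - h_t\ra + \gamma_t\la b_{t+1}, h_t - z\ra$, where $\{h_t\}$ is the auxiliary (shadow) sequence defined in Lemma~\ref{lem-juditsky}, driven only by the noise directions $b_{t+1}$ and initialized at $h_0 = x_0$. By Lemma~\ref{lem-juditsky}, the second piece is bounded: $\gamma_t\la b_{t+1}, h_t - z\ra \le B_\psi(z,h_t) - B_\psi(z,h_{t+1}) + \frac{\gamma_t^2 \e_{t+1}^2}{2\alpha}$. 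The first piece, $\gamma_t\la b_{t+1}, y_{t+1} - h_t\ra$, is kept as is — it has zero conditional expectation in the monotone analysis after telescoping — so it appears verbatim in the statement. This yields exactly the $B_\psi(z,h_t) - B_\psi(z,h_{t+1})$ and $\gamma_t\la b_{t+1}, y_{t+1} - h_t\ra$ terms in the claimed inequality.

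The remaining work is bookkeeping on the curvature and constant terms. In Lemma~\ref{lem-gen-stoc} the coefficient of $-B_\psi(x_{t+1},y_{t+1})$ is $1 - \frac{2w_1 + w_2 + w_3 + w_4}{\alpha}$ for free positive constants $w_1,\dots,w_4$; the plan is to choose them (e.g. $w_1 = w_2 = w_3 = w_4 = \alpha/8$, or a similar explicit choice) so that this coefficient is nonnegative, allowing us to drop the $-B_\psi(x_{t+1},y_{t+1})$ term entirely, and simultaneously so that $\frac{\gamma_t^2 L_\nu^2}{2w_2}$ and $\frac{\gamma_t^2 L_\nu^2}{2w_3}$ collapse to $\frac{2\gamma_t^2 L_\nu^2}{\alpha}$, $\frac{2\gamma_t^2 M_\nu^2}{w_4}$ to $\frac{8\gamma_t^2 M_\nu^2}{\alpha}$, and $\frac{\gamma_t^2}{2w_1}(\e_t^2 + \e_{t+1}^2)$ to $\frac{4\gamma_t^2}{\alpha}(\e_t^2+\e_{t+1}^2)$; adding the extra $\frac{\gamma_t^2\e_{t+1}^2}{2\alpha}$ from Lemma~\ref{lem-juditsky} upgrades the $\e_{t+1}^2$ coefficient from $\frac{4}{\alpha}$ to $\frac{9}{2\alpha}$, matching the statement. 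We also discard $-B_\psi(y_{t+1},x_t) \le 0$. The $\nu = 0$ case is identical with only $w_1,w_2$ present (choose $w_1 = w_2 = \alpha/4$, giving the $\frac{2}{\alpha}, \frac{5}{2\alpha}$ noise coefficients and the $\frac{(L_0+M_0)^2}{\alpha}$ constant). The main obstacle is not conceptual but rather verifying that one consistent choice of the $w_i$ makes \emph{all} these coefficient identities hold simultaneously while keeping the $B_\psi(x_{t+1},y_{t+1})$ coefficient nonnegative; this is a small system of constraints that I would solve explicitly at the start before substituting.
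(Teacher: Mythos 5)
Your proposal is correct and follows essentially the same route as the paper: apply Lemma~\ref{lem-gen-stoc}, use monotonicity to pass from $\la F(y_{t+1}),\cdot\ra$ to $\la F(z),\cdot\ra$, split the noise term via the shadow sequence $\{h_t\}$, and absorb $\gamma_t\la b_{t+1},h_t-z\ra$ with Lemma~\ref{lem-juditsky}. Your coefficient identities do admit a consistent solution, but not the illustrative one you give: for $\nu>0$ the choice forced by your targets is $w_1=\alpha/8$, $w_2=w_3=w_4=\alpha/4$ (making the $B_\psi(x_{t+1},y_{t+1})$ coefficient exactly zero), not $w_i=\alpha/8$ for all $i$; and for $\nu=0$ you need $w_1=\alpha/4$, $w_2=\alpha/2$, since your $w_2=\alpha/4$ would give $\tfrac{2(L_0+M_0)^2}{\alpha}$ rather than the stated $\tfrac{(L_0+M_0)^2}{\alpha}$.
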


\begin{proof}
First, we consider the case $\nu>0$ and
     use Lemma~\ref{lem-gen-stoc},  where we
     set $w_1 = \alpha/8$ and $w_2 = w_3 = w_4 = \alpha/4$, so that 
     $1- \frac{2 w_1+w_2+w_3+w_4}{\alpha} = 0$ and obtain
     \begin{align}
         &B_{\psi}(z,x_{t+1}) \leq B_{\psi}(z,x_t) 
       +\frac{2\gamma_t^2 L_{\nu}^2}{\alpha} \left(\frac{2}{\alpha} B_\psi(x_t,y_t) \right)^\nu \!-\! B_{\psi}(y_{t+1},x_{t}) \cr 
       & + \gamma_t \left\la F(y_{t+1}), z - y_{t+1} \right\ra + \gamma_t \left\la b_{t+1}, y_{t+1} - z \right\ra 
    + \frac{2\gamma_t^2 L_{\nu}^2}{\alpha} \left(\frac{2}{\alpha} B_\psi(y_{t+1},x_t) \right)^\nu \cr
    &+ \frac{4\gamma_t^2}{\alpha} \left(\epsilon_{t}^2 + \epsilon_{t+1}^2 \right) + \frac{8 \gamma_t^2 M_{\nu}^2}{\alpha} . \nonumber
     \end{align}
     We use the monotonicity of the mapping $F$
     (Assumption~\ref{asum-monotone}) and by re-arranging the terms in the preceding relation, we obtain 
     \begin{align*}
         &\gamma_t \left\la F(z), y_{t+1} - z \right\ra \leq B_{\psi}(z,x_t) - B_{\psi}(z,x_{t+1}) +\frac{2\gamma_t^2 L_{\nu}^2}{\alpha} \left(\frac{2}{\alpha} B_\psi(x_t,y_t) \right)^\nu \cr 
       & - B_{\psi}(y_{t+1},x_{t}) + \gamma_t \left\la b_{t+1}, y_{t+1} - z \right\ra 
    + \frac{2\gamma_t^2 L_{\nu}^2}{\alpha} \left(\frac{2}{\alpha} B_\psi(y_{t+1},x_t) \right)^\nu \cr
    &+ \frac{4\gamma_t^2}{\alpha} \left(\epsilon_{t}^2 + \epsilon_{t+1}^2 \right) + \frac{8 \gamma_t^2 M_{\nu}^2}{\alpha} .
     \end{align*}
    Next, in the fifth term on the right-hand side of the preceding relation, we add add and subtract the quantity $\gamma_t \la b_{t+1}, h_t \ra$ to arrive at
     \begin{align}
         &\gamma_t \left\la F(z), y_{t+1} - z \right\ra \leq B_{\psi}(z,x_t) - B_{\psi}(z,x_{t+1}) +\frac{2\gamma_t^2 L_{\nu}^2}{\alpha} \left(\frac{2}{\alpha} B_\psi(x_t,y_t) \right)^\nu \cr 
       & - B_{\psi}(y_{t+1},x_{t}) + \gamma_t \left\la b_{t+1}, h_t - z \right\ra + \gamma_t \left\la b_{t+1}, y_{t+1} - h_t \right\ra \cr
    &+ \frac{2\gamma_t^2 L_{\nu}^2}{\alpha} \left(\frac{2}{\alpha} B_\psi(y_{t+1},x_t) \right)^\nu + \frac{4\gamma_t^2}{\alpha} \left(\epsilon_{t}^2 + \epsilon_{t+1}^2 \right) + \frac{8 \gamma_t^2 M_{\nu}^2}{\alpha} , \nonumber
     \end{align}
     where $h_t$ is defined in Lemma~\ref{lem-juditsky}. The fifth quantity on the right-hand side of the preceding relation can be upper estimated using Lemma~\ref{lem-juditsky}. Upper estimating the fourth quantity on the right-hand side of the preceding relation by $0$, we finally obtain the required relation of the lemma for the case when $\nu>0$.

     For the case when $\nu=0$, we use the second relation of Lemma~\ref{lem-gen-stoc}  with $w_1 = \alpha/4$ $w_2 = \alpha/2,$ and obtain  
    \begin{align*}
        &B_{\psi}(z,x_{t+1}) 
        \leq B_{\psi}(z,x_t) - B_{\psi}(y_{t+1},x_{t}) 
    +\frac{2\gamma_t^2}{\alpha} (\epsilon_{y_t}^2 + \epsilon_{y_{t+1}}^2 ) \cr 
    &+ \frac{\gamma_t^2 (L_0+M_0)^2}{\alpha} + \gamma_t \la F(y_{t+1}), z - y_{t+1} \ra + \gamma_t \la b_{t+1}, y_{t+1} - z \ra. \nonumber
    \end{align*}
    The rest of the proof proceeds analogously to the case when $\nu > 0$. \hfill$\square$
\end{proof}

There is a dependence of $\gamma_t^2$ on the right-hand side of the relation in Lemma~\ref{lem-error-control}, whereas the dependence on the left-hand side is on $\gamma_t$. Proper selection of the step size is required to obtain the best convergence guarantees which we analyze in the sequel. 
We use the notion of a merit function to quantify the convergence rate of the algorithm. As a merit function, we consider the dual gap function \cite{marcotte1998weak, nemirovski2004prox}, defined as follows:
\begin{align}
    G(x) = \sup_{z \in X} \la F(z), x-z \ra, \quad \forall x \in X. \label{dg-gap}
\end{align}
The dual gap function is always non-negative and becomes zero only when $x=x^*$ is a \textit{Minty} solution (cf. \eqref{minty-vi}) of the VI$(X,F)$ \cite{chakraborty2024popov, juditsky2016solving}. Now, going back to the solution concepts of VI$(X,F)$, we know that under Assumption~\ref{asum-monotone}, the solution set is a subset of the Minty solution set \cite[Lemma 2.2]{huang2023beyond}. In addition to Assumption~\ref{asum-monotone}, when $\nu \in (0,\infty)$ and $M_\nu = 0$ in Assumption~\ref{asum-holder}, the mapping $F$ is continuous. In this case, the solutions (cf. \eqref{strong-vi}) and the Minty solutions (cf. \eqref{minty-vi}) of VI$(X,F)$ coincide~\cite[Lemma 1.5]{kinderlehrer2000introduction}. 

For the stochastic version of the algorithm, we assume a compact set $X$.
\begin{assumption}\label{asum-set2}
    The constraint set $X$ is bounded.
\end{assumption}
Since the Bregman divergence is a continuous function, under Assumption~\ref{asum-set2}, there exists a scalar $D>0$ such that
    \begin{align}
        \max_{x,z \in X} B_\psi(z,x) \leq D. \label{bounded-set}
    \end{align}

We let $\cF_t$ denote the random realization of the initial iterate $x_0$ and the realizations of the random variables $\xi_t$ up to time $t$, inclusively, i.e., 
\begin{align}
    \cF_t = \{x_0\} \cup \{\xi_\tau\}_{\tau=0}^t\quad\hbox{for  $t\ge0$}, \qquad \hbox{and}\qquad
    \cF_{-1} = \{x_0\}. \label{sigma-alg}
\end{align}
We make the following assumptions on the bias and variance of the random samples of the mapping $\widehat F(\cdot,\xi)$.

\begin{assumption}\label{asum-stoc-bias-var}
   Given the sigma-algebra $\mathcal{F}_{t-1}$, for all $z \in X$ and $t \geq 0$,
   the sample mapping $\widehat{F}(z, \xi_{t})$ is conditionally unbiased
   and its conditional variance is bounded by $\sigma^2$, i.e.,
   for all $z \in X$ and $t \geq 0$,
    \[\EXP{\widehat F(z, \xi_{t}) \mid \cF_{t-1}} = F(z),
        \qquad\quad
        \EXP{\|\widehat F(z, \xi_{t}) - F(z) \|_*^2 \mid \cF_{t-1}} \leq \sigma^2 . \]
\end{assumption}

\subsection{Weighted-averages of the iterates using step sizes as weights}
We analyze the convergence rate of the method using the weighted averages of the iterates with weights $\omega_\tau = \gamma_\tau$ in~\eqref{avg-itr}.
The following lemma is used to estimate the expected value of the dual gap function in terms of the step sizes.

\begin{lemma}\label{lem-avg}
    Let Assumptions~\ref{asum-set1}--\ref{asum-stoc-bias-var} hold. Then, for the iterates of the stochastic Popov mirror-prox we have for any $T\ge \tilde t\ge0$,
    \begin{align}
        \EXP{G \left( \frac{\sum_{t=\Tilde{t}}^T \gamma_t y_{t+1}}{\sum_{t=\Tilde{t}}^T \gamma_t} \right)} \leq \frac{2 D + \widehat D \sum_{t=\Tilde{t}}^T \gamma_t^2 }{\sum_{t=\Tilde{t}}^T \gamma_t} , \nonumber
    \end{align}
    where $D\ge \max_{x,z \in X} B_\psi(z,x)$ (see~\eqref{bounded-set}) and the constant $\widehat D$ is defined by
    \begin{align}
        \widehat D = \begin{cases}
        \frac{2^{2+\nu} L_{\nu}^2 D^\nu}{\alpha^{1+\nu}} + \frac{17\sigma^2 + 16 M_{\nu}^2}{2 \alpha} \quad &\text{when $\nu > 0$},\\
         \frac{9\sigma^2 + 2(L_0+M_0)^2}{2\alpha} \quad &\text{when $\nu = 0$}.
        \end{cases} \label{D_hat}
    \end{align}
\end{lemma}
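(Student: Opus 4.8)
The plan is to start from Lemma~\ref{lem-error-control}, sum the per-iteration inequalities over $t = \tilde t, \ldots, T$, take expectations, and then divide by $\sum_{t=\tilde t}^T \gamma_t$ after recognizing that the left-hand side, combined with a supremum over $z$, produces the dual gap function evaluated at the weighted average. The first step is to handle the term $\gamma_t \la F(z), y_{t+1} - z\ra$: since $\sum_t \gamma_t \la F(z), y_{t+1}-z\ra = \left(\sum_t \gamma_t\right)\la F(z), y^{(T+1)} - z\ra$ by linearity, dividing by $\sum_t \gamma_t$ and taking the supremum over $z \in X$ gives exactly $G(y^{(T+1)})$ on the left. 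The key subtlety here is the interchange of the supremum with the expectation; I would take the supremum \emph{before} the expectation on the summed inequality (the right-hand side after summing will be bounded by a $z$-independent quantity plus the telescoping Bregman terms $B_\psi(z, x_{\tilde t}) + B_\psi(z, h_{\tilde t})$, both of which are at most $D$ by~\eqref{bounded-set}), so that $\sup_z$ can be passed through legitimately and then $\EXP{\cdot}$ applied.

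Next I would bound the right-hand side of the summed inequality term by term. The Bregman differences $B_\psi(z,x_t) - B_\psi(z,x_{t+1})$ and $B_\psi(z,h_t) - B_\psi(z,h_{t+1})$ telescope to at most $B_\psi(z,x_{\tilde t}) + B_\psi(z, h_{\tilde t}) \le 2D$. The noise cross-term $\gamma_t \la b_{t+1}, y_{t+1} - h_t\ra$ requires care: $y_{t+1}$ and $h_t$ are both $\cF_t$-measurable (note $h_{t+1}$ depends on $b_{t+1}$, hence on $\xi_{t+1}$, but $h_t$ does not), while $\EXP{b_{t+1} \mid \cF_t} = 0$ by the unbiasedness in Assumption~\ref{asum-stoc-bias-var}; therefore $\EXP{\gamma_t \la b_{t+1}, y_{t+1} - h_t\ra} = 0$ by the tower property. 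The terms involving $B_\psi(x_t, y_t)^\nu$ and $B_\psi(y_{t+1}, x_t)^\nu$ are bounded above using~\eqref{bounded-set}: each Bregman divergence is at most $D$, so $\left(\tfrac{2}{\alpha} B_\psi(\cdot,\cdot)\right)^\nu \le \left(\tfrac{2D}{\alpha}\right)^\nu$, giving $\tfrac{2\gamma_t^2 L_\nu^2}{\alpha}\left(\tfrac{2D}{\alpha}\right)^\nu = \tfrac{2^{1+\nu} L_\nu^2 D^\nu}{\alpha^{1+\nu}}\gamma_t^2$ for each such term, so the two together contribute $\tfrac{2^{2+\nu}L_\nu^2 D^\nu}{\alpha^{1+\nu}}\gamma_t^2$. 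The variance terms $\tfrac{\gamma_t^2}{\alpha}\left(4\epsilon_t^2 + \tfrac{9}{2}\epsilon_{t+1}^2\right)$ have expectation at most $\tfrac{\gamma_t^2}{\alpha}\left(4 + \tfrac{9}{2}\right)\sigma^2 = \tfrac{17\sigma^2}{2\alpha}\gamma_t^2$ using $\EXP{\epsilon_\tau^2} \le \sigma^2$ (via the tower property and Assumption~\ref{asum-stoc-bias-var}); here one should be slightly careful that $\epsilon_{\tilde t}$ and $\epsilon_{T+1}$ appear with possibly different coefficients after reindexing the sum, but since all coefficients are dominated by $4$ and $9/2$ respectively the bound $\tfrac{17\sigma^2}{2\alpha}\sum_t\gamma_t^2$ still holds. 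Finally the term $\tfrac{8\gamma_t^2 M_\nu^2}{\alpha}$ sums to $\tfrac{16 M_\nu^2}{2\alpha}\sum_t \gamma_t^2$. Collecting the $\gamma_t^2$ coefficients yields exactly $\widehat D$ as defined in~\eqref{D_hat} for the case $\nu > 0$; the case $\nu = 0$ is entirely analogous using the second relation of Lemma~\ref{lem-error-control}, with $B_\psi$-power terms absent and the $(L_0+M_0)^2$ term replacing them, and with coefficients $2$ and $5/2$ on the variance terms summing to $\tfrac{9}{2}$.

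Putting the pieces together, after summing, taking $\sup_z$ (which affects only the $2D$ Bregman bound since everything else is $z$-free), and taking expectations, we arrive at
\begin{align*}
\left(\sum_{t=\tilde t}^T \gamma_t\right) \EXP{G\!\left(\frac{\sum_{t=\tilde t}^T \gamma_t y_{t+1}}{\sum_{t=\tilde t}^T \gamma_t}\right)} \le 2D + \widehat D \sum_{t=\tilde t}^T \gamma_t^2,
\end{align*}
and dividing through by $\sum_{t=\tilde t}^T \gamma_t$ gives the claim. The main obstacle I anticipate is the measurability bookkeeping around the cross-term $\gamma_t\la b_{t+1}, y_{t+1}-h_t\ra$: one must verify that the auxiliary sequence $\{h_t\}$ from Lemma~\ref{lem-juditsky} is driven by the \emph{same} noise realizations $b_{t+1}$ and initialized compatibly (e.g. $h_{\tilde t} = x_{\tilde t}$ or some fixed point in $X$), so that $h_t$ is $\cF_{t-1}$-measurable (or at least $\cF_t$-measurable with $\EXP{b_{t+1}\mid\cF_t} = 0$ still applicable), and that Jensen's inequality legitimately yields $G$ of the average rather than the average of $G$ — but since $G$ is convex (a supremum of affine functions of $x$) this last point is immediate, and in fact we do not even need Jensen because $\la F(z), \cdot\ra$ is affine so the identity $\sum_t \gamma_t \la F(z), y_{t+1}-z\ra = \left(\sum_t\gamma_t\right)\la F(z), y^{(T+1)}-z\ra$ is exact.
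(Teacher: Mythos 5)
Your proposal is correct and follows essentially the same route as the paper's proof: summing the per-iteration bound from Lemma~\ref{lem-error-control} over $t=\tilde t,\ldots,T$, telescoping the Bregman terms to $2D$, killing the cross-term $\gamma_t\la b_{t+1}, y_{t+1}-h_t\ra$ via the tower property, bounding the $B_\psi$-power and variance terms by $D^\nu$ and $\sigma^2$ respectively, and collecting the $\gamma_t^2$ coefficients into $\widehat D$ (your arithmetic matches~\eqref{D_hat} in both cases). Your extra care about taking $\sup_z$ before the expectation and about the $\cF_t$-measurability of $y_{t+1}$ and $h_t$ is sound and, if anything, slightly tidier than the paper's presentation.
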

\begin{proof}
    \noindent {\it Case $\nu>0$:} 
    Under Assumption~\ref{asum-set2}, 
    from Lemma~\ref{lem-error-control}, where we bound the fifth and the seventh term on the right-hand side of the relation given in the lemma, we obtain for any $z\in X$,
    \begin{align*}
        &\gamma_t \left\la F(z), y_{t+1} - z \right\ra \leq B_{\psi}(z,x_t) - B_{\psi}(z,x_{t+1}) + B_\psi(z,h_{t}) - B_\psi(z,h_{t+1}) \cr 
       & +\frac{2^{2+\nu} L_{\nu}^2 D^\nu}{\alpha^{1+\nu}} \gamma_t^2 + \gamma_t \left\la b_{t+1}, y_{t+1} - h_t \right\ra + \frac{\gamma_t^2}{\alpha} \left(4\epsilon_{t}^2 + \frac{9}{2}\epsilon_{t+1}^2 \right) + \frac{8 \gamma_t^2 M_{\nu}^2}{\alpha} .
    \end{align*}
    We sum the preceding relations for $t = \Tilde{t}, \ldots, T$, for some $T\ge \Tilde{t} \geq 0$  to obtain
    \begin{align}
        &\sum_{t=\Tilde{t}}^T \gamma_t \left\la F(z), y_{t+1} - z \right\ra \leq B_{\psi}(z,x_{\Tilde{t}}) - B_{\psi}(z,x_{T+1}) + B_\psi(z,h_{\Tilde{t}}) - B_\psi(z,h_{T+1}) \cr 
       & + \sum_{t=\Tilde{t}}^T \gamma_t \left\la b_{t+1}, y_{t+1} - h_t \right\ra + \left(\frac{2^{2+\nu} L_{\nu}^2 D^\nu}{\alpha^{1+\nu}} + \frac{4\epsilon_{t}^2 + \frac{9}{2}\epsilon_{t+1}^2 + 8 M_{\nu}^2}{\alpha} \right) \sum_{t=\Tilde{t}}^T \gamma_t^2 . \nonumber 
    \end{align}
    Wee upper estimate the second and the fourth terms on the right-hand side of the preceding relation with $0$ and, thus, we have
    \begin{align}
        \sum_{t=\Tilde{t}}^T \gamma_t \left\la F(z), y_{t+1} - z \right\ra \leq B_{\psi}(z,x_{\Tilde{t}}) + B_\psi(z,h_{\Tilde{t}}) + \sum_{t=\Tilde{t}}^T \gamma_t \left\la b_{t+1}, y_{t+1} - h_t \right\ra \cr
        + \left(\frac{2^{2+\nu} L_{\nu}^2 D^\nu}{\alpha^{1+\nu}} + \frac{4\epsilon_{t}^2 + \frac{9}{2}\epsilon_{t+1}^2 + 8 M_{\nu}^2}{\alpha} \right) \sum_{t=\Tilde{t}}^T \gamma_t^2. \label{hol-lem-smpl}
    \end{align}
    By linearity, the left-hand side of \eqref{hol-lem-smpl} can be expressed as
    \begin{align*}
        \sum_{t=\Tilde{t}}^T \gamma_t \la F(z), y_{t+1}- z \ra
        &=\left( \sum_{t=\Tilde{t}}^T \gamma_t \right) \left\la F(z), \frac{\sum_{t=\Tilde{t}}^T \gamma_t y_{t+1}}{\sum_{t=\Tilde{t}}^T \gamma_t} - z \right\ra.
    \end{align*}
    We use the preceding relation back in~\eqref{hol-lem-smpl},  simplify the terms, take the maximum over $z \in X$ on both sides of the relation, and obtain the following upper bound on the dual gap function (cf.~\eqref{dg-gap})
    \begin{align*}
        &G \left(\frac{\sum_{t=\Tilde{t}}^T \gamma_t y_{t+1}}{\sum_{t=\Tilde{t}}^T \gamma_t} \right) \leq \frac{1}{\left( \sum_{t=\Tilde{t}}^T \gamma_t \right)} \Bigg[\max_{z \in X} \left[ B_{\psi}(z,x_{\Tilde{t}}) + B_\psi(z,h_{\Tilde{t}}) \right] \cr
        &+ \sum_{t=\Tilde{t}}^T \gamma_t \left\la b_{t+1}, y_{t+1} - h_t \right\ra + \left(\frac{2^{2+\nu} L_{\nu}^2 D^\nu}{\alpha^{1+\nu}} + \frac{4\epsilon_{t}^2 + \frac{9}{2}\epsilon_{t+1}^2 + 8 M_{\nu}^2}{\alpha} \right) \sum_{t=\Tilde{t}}^T \gamma_t^2 \Bigg].
    \end{align*}
    We use Assumption~\ref{asum-set2} to bound the first two quantities on the right-hand side of the preceding relation as $\max_{z \in X} \left[ B_{\psi}(z,x_{\Tilde{t}}) + B_\psi(z,h_{\Tilde{t}}) \right] \leq 2D$. Next, we take total expectation on both sides of the preceding relation and use the linearity of expectation to write
    \begin{align}
        \EXP{G \left(\frac{\sum_{t=\Tilde{t}}^T \gamma_t y_{t+1}}{\sum_{t=\Tilde{t}}^T \gamma_t} \right)} \leq \frac{1}{\left( \sum_{t=\Tilde{t}}^T \gamma_t \right)} \Bigg[2D + \EXP{\sum_{t=\Tilde{t}}^T \gamma_t \left\la b_{t+1}, y_{t+1} - h_t \right\ra} \cr
        + \left(\frac{2^{2+\nu} L_{\nu}^2 D^\nu}{\alpha^{1+\nu}} + \frac{4 \EXP{\epsilon_{t}^2} + \frac{9}{2} \EXP{\epsilon_{t+1}^2} + 8 M_{\nu}^2}{\alpha} \right) \sum_{t=\Tilde{t}}^T \gamma_t^2 \Bigg] . \label{hol-lem-smpl2}
    \end{align}
    The second quantity on the right-hand side of relation~\eqref{hol-lem-smpl2} can simplified using the law of iterated expectation, i.e.,
    \begin{align*}
        \EXP{\sum_{t=\Tilde{t}}^T \gamma_t \left\la b_{t+1}, y_{t+1} - h_t \right\ra} = \EXP{\sum_{t=\Tilde{t}}^T \gamma_t \EXP{\left\la b_{t+1}, y_{t+1} - h_t \right\ra \mid \cF_t}} = 0.
    \end{align*}
    Using the preceding relation in~\eqref{hol-lem-smpl2} and Assumption~\ref{asum-stoc-bias-var} to estimate the error terms by $\EXP{\epsilon_{t}^2} \leq \sigma^2$ and $\EXP{\epsilon_{t+1}^2} \leq \sigma^2$, we obtain the desired relation for $\nu>0$.

    \noindent {\it Case $\nu=0$:} We use Lemma~\ref{lem-error-control} for the case $\nu=0$ and follow the same analysis as done for the case $\nu > 0$. 
    \hfill$\square$
\end{proof}

Now, we establish the convergence rates for different step sizes assuming that the total number $T$ of iterations is given in advance.
\begin{theorem}\label{thm-holder-known-itr}
    Let Assumptions~\ref{asum-set1}--\ref{asum-stoc-bias-var} hold 
    and the number $T\ge 0$ of iterations  be given.
    Then,  for the stochastic Popov mirror-prox algorithm, we have
    \begin{align}
        &\EXP{G \left( \frac{1}{T+1} \sum_{t=0}^T y_{t+1} \right)} \leq \frac{2D + c^2 \widehat D}{c \sqrt{T+1}} \ \hbox{when $\gamma_t \!=\! \gamma \!=\! \frac{c}{\sqrt{T+1}}$ for all $t=0,\ldots T$}, \nonumber\\
        &\EXP{G \left( \frac{\sum_{t= \lceil \frac{T}{2} \rceil}^T \gamma_t y_{t+1}}{\sum_{t= \lceil \frac{T}{2} \rceil}^T \gamma_t} \right)} \leq \frac{4D + 2  \widehat D c^2\ln 4 }{c \sqrt{T+1}} 
        \ \hbox{when $\gamma_t =\frac{c}{\sqrt{t+1}}$ for $t=0,\ldots T$}, \nonumber
    \end{align}
    where $c>0$, $\max_{x,z \in X} B_\psi(z,x)\le D$ (see~\eqref{bounded-set}), and $\widehat D$ is as in Lemma~\ref{lem-avg}.
\end{theorem}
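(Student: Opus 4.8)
The plan is to obtain both bounds as immediate specializations of Lemma~\ref{lem-avg}, which already packages every substantive estimate needed: the per-iteration inequality of Lemma~\ref{lem-gen-stoc}, the telescoping of the noise-induced terms via Lemma~\ref{lem-juditsky}, the use of monotonicity, and the martingale argument that makes the inner-product error term vanish in expectation. Since Assumptions~\ref{asum-set1}--\ref{asum-stoc-bias-var} are exactly the hypotheses of the theorem, Lemma~\ref{lem-avg} applies verbatim and yields, for every $T \ge \tilde t \ge 0$,
\[
  \EXP{G\!\left(\frac{\sum_{t=\tilde t}^T \gamma_t y_{t+1}}{\sum_{t=\tilde t}^T \gamma_t}\right)} \le \frac{2D + \widehat D \sum_{t=\tilde t}^T \gamma_t^2}{\sum_{t=\tilde t}^T \gamma_t},
\]
with $\widehat D$ as defined in~\eqref{D_hat}, covering both $\nu>0$ and $\nu=0$ simultaneously. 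All that remains is to evaluate or bound the two scalar sums for each prescribed step size and choice of the starting index $\tilde t$ in the averaged iterate~\eqref{avg-itr}.

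For the constant step size $\gamma_t \equiv \gamma = c/\sqrt{T+1}$ I would take $\tilde t = 0$. Then the weights $\omega_\tau = \gamma_\tau$ are all equal, so the output averaged iterate reduces to the plain arithmetic mean $\tfrac{1}{T+1}\sum_{t=0}^T y_{t+1}$, which explains why the first claim is stated without weights. A one-line computation gives $\sum_{t=0}^T \gamma_t = (T+1)\gamma = c\sqrt{T+1}$ and $\sum_{t=0}^T \gamma_t^2 = (T+1)\gamma^2 = c^2$; substituting these into the displayed estimate produces $\dfrac{2D + c^2\widehat D}{c\sqrt{T+1}}$, which is the first bound.

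For the diminishing step size $\gamma_t = c/\sqrt{t+1}$ I would instead take $\tilde t = \lfloor T/2\rfloor$, so the output is precisely the tail-weighted average appearing in the statement — the truncation is what keeps $\sum\gamma_t^2$ bounded while $\sum\gamma_t$ still grows like $\sqrt{T}$. Here I invoke Lemma~\ref{lem-step-bound}, which gives $\sum_{t=\lfloor T/2\rfloor}^T \gamma_t^2 \le c^2\ln 6$ and $\sum_{t=\lfloor T/2\rfloor}^T \gamma_t \ge \tfrac12 c\sqrt{T+1}$. Plugging the first into the numerator and the second into the denominator of the Lemma~\ref{lem-avg} bound yields $\dfrac{2D + \widehat D c^2\ln 6}{\tfrac12 c\sqrt{T+1}} = \dfrac{4D + 2\widehat D c^2\ln 6}{c\sqrt{T+1}}$, the second bound. (The case $T=0$, where Lemma~\ref{lem-step-bound} is not stated, is checked directly: the sums reduce to $\gamma_0^2 = c^2 \le c^2\ln 6$ and $\gamma_0 = c \ge c/2$.)

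There is no genuine obstacle: the theorem is a corollary, and the heavy lifting lives in Lemmas~\ref{lem-gen-stoc}, \ref{lem-error-control}, and~\ref{lem-avg}. The only points requiring a moment's attention are (i) recognizing that equal weights collapse the weighted average into the arithmetic mean, (ii) selecting $\tilde t = 0$ for the constant step and $\tilde t = \lfloor T/2\rfloor$ for the diminishing step, and (iii) confirming that the hypotheses of Lemma~\ref{lem-avg} and Lemma~\ref{lem-step-bound} match those assumed here. The constant $\widehat D$ is carried through unchanged.
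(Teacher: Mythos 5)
Your proposal is correct and follows essentially the same route as the paper: both bounds are obtained by specializing Lemma~\ref{lem-avg} with $\tilde t = 0$ for the constant step and $\tilde t = \lfloor T/2\rfloor$ for the diminishing step, then evaluating the sums directly in the first case and invoking Lemma~\ref{lem-step-bound} in the second. Your explicit check of the $T=0$ edge case for Lemma~\ref{lem-step-bound} is a small point of care beyond what the paper records, but the argument is otherwise identical.
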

\begin{proof}
    We use Lemma~\ref{lem-avg}, with $\tilde t=0$ and a fixed step size $\gamma_t = \gamma = \frac{c}{\sqrt{T+1}}$. Thus, we obtain
    \begin{align}
        \EXP{G \left( \frac{1}{T+1} \sum_{t=0}^T y_{t+1} \right)} \leq \frac{2 D + \widehat D (T+1) \gamma^2 }{(T+1)\gamma} =\frac{2D + \widehat D c^2 }{c\sqrt{T+1}}. \nonumber
    \end{align}
    For the second relation, we use Lemma~\ref{lem-avg} with 
    $\Tilde{t} = \lceil T/2 \rceil$ and $\gamma_t = \frac{c}{\sqrt{t+1}}$. Then, we use Lemma~\ref{lem-step-bound} to bound the stepsize related sums.
    \hfill$\square$
\end{proof}
Theorem~\ref{thm-holder-known-itr} shows that the expected value of the dual gap function evaluated at suitably averaged iterates converges at the rate of $O(\frac{1}{\sqrt{T+1}})$, which is a standard rate for stochastic methods. 
The above theorem also shows that, instead of using constant step sizes $O(\frac{c}{\sqrt{T+1}})$ which could be small, one can use larger step sizes $\gamma_t = \frac{c}{\sqrt{t+1}}$ 
and still achieve the same rate of $O(\frac{1}{\sqrt{T+1}})$ but with a different averaging window. Instead of $\left\lceil \frac{T}{2} \right\rceil$ we can choose any generic $\left\lceil \frac{T}{m} \right\rceil$, with $m>2$, and Theorem~\ref{thm-holder-known-itr} will hold with a slight modification of the constants. For simplicity, we used $m=2$.

When the number of iterations is not chosen in advance, then we can select diminishing step sizes for which we have the following result.
\begin{theorem}\label{thm-hol-conv-unknown-itr}
    Let Assumptions~\ref{asum-set1}--\ref{asum-stoc-bias-var} hold.
    Then, for the stochastic Popov mirror-prox with a diminishing step size $\gamma_t = \frac{c}{(t+1)^a}$, where the constants $c>0$ and $0<a<1$, we have the following rate result, for all $T\ge0$,
    \begin{align}
        \EXP{G \left( \frac{\sum_{t=0}^T \gamma_t y_{t+1}}{\sum_{t=0}^T \gamma_t} \right)} \leq
        \begin{cases}
             \frac{2D}{c(T+1)^{1-a}} + \frac{\widehat D c}{(1-2a) (T+1)^a} \quad &\text{when $0<a<\frac{1}{2}$},\\
             \frac{2D}{c(T+1)^{1-a}} + \frac{\widehat D c (1+\ln(T+1)) }{(T+1)^{1-a}}  \quad &\text{when $a=\frac{1}{2}$},\\
             \frac{2D}{c(T+1)^{1-a}} + \frac{ 2a \widehat D c }{(2a-1) (T+1)^{1-a}}  \quad &\text{when $\frac{1}{2}<a<1$} ,
        \end{cases} \nonumber
    \end{align}
    where $\max_{x,z \in X} B_\psi(z,x)\le D$ and $\widehat D$ is as in Lemma~\ref{lem-error-control} (cf.~\eqref{D_hat}).
\end{theorem}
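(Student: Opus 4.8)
The plan is to invoke Lemma~\ref{lem-avg} with $\tilde t = 0$, which yields, for any $T \ge 0$,
\[
\EXP{G \left( \frac{\sum_{t=0}^T \gamma_t y_{t+1}}{\sum_{t=0}^T \gamma_t} \right)} \leq \frac{2D + \widehat D \sum_{t=0}^T \gamma_t^2}{\sum_{t=0}^T \gamma_t},
\]
and then to split this single fraction into two pieces, $\frac{2D}{\sum_{t=0}^T \gamma_t}$ and $\widehat D \cdot \frac{\sum_{t=0}^T \gamma_t^2}{\sum_{t=0}^T \gamma_t}$, each of which is bounded using the scalar-sequence estimates of Lemma~\ref{lem-step-bd2} with $\gamma_t = c/(t+1)^a$.

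For the first piece, part (i) of Lemma~\ref{lem-step-bd2} gives $\sum_{t=0}^T \gamma_t \ge c(T+1)^{1-a}$, so $\frac{2D}{\sum_{t=0}^T \gamma_t} \le \frac{2D}{c(T+1)^{1-a}}$ — this is the common first summand in all three cases. For the second piece I would use part (ii) with $p = 0$, so that $\frac{2}{1-p} = 2$ and the three sub-cases of part (ii) correspond exactly to the three regimes $0 < a < \tfrac12$, $a = \tfrac12$, and $\tfrac12 < a < 1$ in the statement. Concretely: when $0 < a < \tfrac12$, $\sum \gamma_t^2 \le \frac{c^2}{1-2a}(T+2)^{1-2a}$, and dividing by $c(T+1)^{1-a}$ gives a bound of order $\frac{c}{(1-2a)}\cdot\frac{(T+2)^{1-2a}}{(T+1)^{1-a}}$; using $(T+2)^{1-2a} \le 2^{1-2a}(T+1)^{1-2a}$ (valid since $T+2 \le 2(T+1)$ and $1-2a > 0$) and $\frac{(T+1)^{1-2a}}{(T+1)^{1-a}} = (T+1)^{-a}$ produces exactly $\frac{\widehat D c\, 2^{1-2a}}{(1-2a)(T+1)^a}$. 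When $a = \tfrac12$, $\sum \gamma_t^2 \le c^2 \ln(T+2)$, and dividing by $c(T+1)^{1/2} = c(T+1)^{1-a}$ gives $\frac{\widehat D c\ln(T+2)}{(T+1)^{1-a}}$. When $\tfrac12 < a < 1$, $\sum \gamma_t^2 \le \frac{c^2}{2a-1}$ (a constant), and dividing by $c(T+1)^{1-a}$ gives $\frac{\widehat D c}{(2a-1)(T+1)^{1-a}}$. Adding the two pieces in each regime reproduces the three displayed bounds.

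This argument is essentially a bookkeeping exercise: the substantive content is already contained in Lemma~\ref{lem-avg} (the per-iteration descent plus the martingale cancellation of the noise term) and Lemma~\ref{lem-step-bd2} (the elementary integral comparisons for the step-size sums). The only mildly delicate point is the constant tracking in the case $0 < a < \tfrac12$, where one must be careful to replace $(T+2)^{1-2a}$ by a multiple of $(T+1)^{1-2a}$ so that the final exponent on $T+1$ comes out as $-a$ rather than $-(1-2a)/\text{something}$; this is where I expect a reader to have to pause, though it is entirely routine. No monotonicity of $F$ beyond what Lemma~\ref{lem-avg} already assumes, and no advance knowledge of $T$, is needed, which is the whole point of using the diminishing schedule.
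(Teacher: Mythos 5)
Your proposal is correct and follows essentially the same route as the paper's proof: invoke Lemma~\ref{lem-avg} with $\tilde t = 0$, bound the denominator via Lemma~\ref{lem-step-bd2}(i) and the numerator via Lemma~\ref{lem-step-bd2}(ii) with $p=0$, and in the regime $0<a<\tfrac12$ absorb $(T+2)^{1-2a}$ into $2^{1-2a}(T+1)^{1-2a}$ to land the exponent $-a$. The constant tracking matches the paper's exactly.
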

\begin{proof}
    Using Lemma~\ref{lem-avg}, with $\Tilde{t} = 0$, we obtain
    \begin{align}
        \EXP{G \left( \frac{\sum_{t=0}^T \gamma_t y_{t+1}}{\sum_{t=0}^T \gamma_t} \right)} \leq \frac{2D + \widehat D \sum_{t=0}^T \gamma_t^2 }{\sum_{t=0}^T \gamma_t} . \label{hol-thm2-eq1}
    \end{align}
    For the step size $\gamma_t = \frac{c}{(t+1)^a}$, with $c>0$ and $0<a<1$, we use Lemma~\ref{lem-step-bd2}(i) to bound the denominator and Lemma~\ref{lem-step-bd2}(ii) with $p=0$ to bound the numerator on the right-hand side of relation~\eqref{hol-thm2-eq1}. Thus, we have
    \begin{align*}
        \EXP{G \left( \frac{\sum_{t=0}^T \gamma_t y_{t+1}}{\sum_{t=0}^T \gamma_t} \right)} &\leq
        \begin{cases}
             \frac{2D}{c(T+1)^{1-a}} + \frac{\widehat D c (T+1)^{1-2a} }{(1-2a) (T+1)^{1-a}}  \quad &\text{when $0<a<\frac{1}{2}$,}\\
             \frac{2D}{c(T+1)^{1-a}} + \frac{\widehat D c (1+\ln(T+1)) }{(T+1)^{1-a}}  \quad &\text{when $a=\frac{1}{2}$,}\\
             \frac{2D}{c(T+1)^{1-a}} + \frac{2a \widehat D c }{(2a-1) (T+1)^{1-a}}  \quad &\text{when $\frac{1}{2}<a<1$} .
        \end{cases} \nonumber\\
    \end{align*}
    %
    Simplifying the preceding relation yields the desired result of the theorem.
    \hfill$\square$
\end{proof}
%
%
%
%
    From Theorem~\ref{thm-hol-conv-unknown-itr} we can see that $a = \frac{1}{2}$ is not the best choice since the rate has an $\ln(T+1)$ term in the numerator which can deteriorate the convergence rate. Next, focusing on the case $0<a<\frac{1}{2}$, we see that the order of convergence is $O(1/T^a)$. Now, setting the value of $a = \frac{m-1}{2m}$, where $m>>1$ is a constant, then we get the rate as
    \begin{align}
        \EXP{G\left(\frac{\sum_{t=0}^T \gamma_t y_{t+1}}{\sum_{t=0}^T \gamma_t} \right)} \leq \frac{2D}{c(T+1)^{\frac{m+1}{2m}}} + \frac{ m \widehat D c}{(T+1)^{\frac{m-1}{2m}}} . \nonumber
    \end{align}
    If we choose for example $m=50$, then the rate is $O(1/T^{0.49})$ and with a larger values of $m$, we can get rate arbitrarily close to $O(T^{0.5})$. On the other hand, the second quantity in the preceding relation has a dependence on $m$ in the numerator so we should not choose extremely large $m$. Now, looking into the case when $\frac{1}{2}<a<1$, the convergence rate is of the order $O(1/T^{1-a})$, which is good for $a$ close to $1/2$. If we choose $a = \frac{m+1}{2m}$, we obtain 
    \begin{align}
        \EXP{G\left(\frac{\sum_{t=0}^T \gamma_t y_{t+1}}{\sum_{t=0}^T \gamma_t} \right)} \leq \frac{2D}{c(T+1)^{\frac{m-1}{2m}}} + \frac{m \widehat D c}{(T+1)^{\frac{m-1}{2m}}} . \nonumber
    \end{align}
    Similar rates to  the preceding case can be obtained with the selection of some $m>>1$, but not too large values of $m$.


However, an optimal convergence rate of $O(1/\sqrt{T+1})$ can be obtained using different weights in the averaging, as seen in the next subsection.

\subsection{Weighted-averages of the iterates using inverse step sizes as weights}
Here, we consider the weighted averages of the iterates (cf.~\eqref{avg-itr}) with the weights $\omega_\tau = \frac{1}{\gamma_\tau}$. In such averages,  we use a diminishing step size which gives larger weights to the more recent iterates than to those computed further in the past. 
Ultimately, this is beneficial to obtain the optimal convergence rate for the stochastic case, as seen in the following result.
\begin{theorem}\label{thm-inv-avg}
    Let Assumptions~\ref{asum-set1}--\ref{asum-stoc-bias-var} hold. Then, for the stochastic Popov mirror-prox algorithm with a diminishing step size $\gamma_t = \frac{c}{(t+1)^a}$, where $c>0$ and $0<a<1$, we have for any $T\ge1$,
    \begin{align}
        \EXP{G \left(\frac{\sum_{t=0}^T \gamma_t^{-1} y_{t+1}}{\sum_{t=0}^T \gamma_t^{-1}} \right)} \leq \frac{2 \times 4^{a} (1+a) D}{c T^{1-a}} + \frac{2 \widehat D c (1+a)}{T^a} , \nonumber
    \end{align}
    with the constant $\widehat D$ given in \eqref{D_hat} and $\max_{x,z \in X} B_\psi(z,x)\le D$.
\end{theorem}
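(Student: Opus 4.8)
The plan is to follow the template of the proof of Lemma~\ref{lem-avg}, but to \emph{reweight} the per-iteration inequality of Lemma~\ref{lem-error-control} by $1/\gamma_t^2$ before summing, so that the coefficient multiplying $\langle F(z),y_{t+1}-z\rangle$ becomes $1/\gamma_t$, i.e. precisely the desired averaging weight $\omega_t$. Concretely, I would take the $\nu>0$ relation of Lemma~\ref{lem-error-control} (the $\nu=0$ branch is identical, using the second relation there and the corresponding $\widehat D$ in~\eqref{D_hat}), divide both sides by $\gamma_t^2$, and sum over $t=0,\dots,T$. The left-hand side becomes $\sum_{t=0}^T\gamma_t^{-1}\langle F(z),y_{t+1}-z\rangle$, which by linearity equals $\bigl(\sum_{t=0}^T\gamma_t^{-1}\bigr)\langle F(z),\bar y_{T+1}-z\rangle$ with $\bar y_{T+1}=\bigl(\sum_{t=0}^T\gamma_t^{-1}y_{t+1}\bigr)/\bigl(\sum_{t=0}^T\gamma_t^{-1}\bigr)$.

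The key step — and the main obstacle, since this is exactly where the argument departs from that of Lemma~\ref{lem-avg} — is handling the two Bregman telescoping blocks $\sum_{t=0}^T\gamma_t^{-2}\bigl(B_\psi(z,x_t)-B_\psi(z,x_{t+1})\bigr)$ and its $h_t$-analogue, which no longer telescope because $\gamma_t^{-2}=(t+1)^{2a}/c^2$ depends on $t$. I would resolve this by an Abel summation (summation by parts): rewrite the first block as $\gamma_0^{-2}B_\psi(z,x_0)-\gamma_T^{-2}B_\psi(z,x_{T+1})+\sum_{t=1}^T(\gamma_t^{-2}-\gamma_{t-1}^{-2})B_\psi(z,x_t)$, then use that $t\mapsto\gamma_t^{-2}$ is nondecreasing together with $0\le B_\psi(z,\cdot)\le D$ (Assumption~\ref{asum-set2}, relation~\eqref{bounded-set}) to collapse the whole block to $\le D/\gamma_T^2$, uniformly in $z$; likewise for the $h_t$ block (whatever $h_0\in X$ is). Hence these two contribute at most $2D/\gamma_T^2$.

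For the remaining terms, after dividing by $\gamma_t^2$ each ``growth/noise'' term of Lemma~\ref{lem-error-control} becomes a bounded constant once $B_\psi(\cdot,\cdot)\le D$ is invoked (e.g. $\tfrac{2L_\nu^2}{\alpha}(\tfrac{2}{\alpha}B_\psi)^\nu\le \tfrac{2^{1+\nu}L_\nu^2 D^\nu}{\alpha^{1+\nu}}$), so summing over the $T+1$ indices and taking expectations with $\EXP{\epsilon_t^2},\EXP{\epsilon_{t+1}^2}\le\sigma^2$ (Assumption~\ref{asum-stoc-bias-var}) reassembles exactly $(T+1)\widehat D$ with $\widehat D$ as in~\eqref{D_hat}. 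The stochastic term $\sum_t\gamma_t^{-1}\langle b_{t+1},y_{t+1}-h_t\rangle$ vanishes in expectation by the tower property, since $y_{t+1}$ and $h_t$ are $\cF_t$-measurable (they depend on $\xi_0,\dots,\xi_t$ only) while $\EXP{b_{t+1}\mid\cF_t}=0$. Taking the supremum over $z\in X$ converts the left side into $\EXP{G(\bar y_{T+1})}$ and yields $\EXP{G(\bar y_{T+1})}\le\bigl(\sum_{t=0}^T\gamma_t^{-1}\bigr)^{-1}\bigl(2D/\gamma_T^2+(T+1)\widehat D\bigr)$.

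Finally I would substitute $\gamma_t=c/(t+1)^a$: lower-bound the denominator by $\sum_{t=0}^T\gamma_t^{-1}\ge T^{1+a}/(c(1+a))$ via Lemma~\ref{lem-step-bd2}(iv), use $\gamma_T^{-2}=(T+1)^{2a}/c^2$, and clean up with the elementary estimates $\frac{(T+1)^{2a}}{T^{1+a}}\le\frac{(1+1/T)^{2a}}{T^{1-a}}\le\frac{4^a}{T^{1-a}}$ and $\frac{T+1}{T^{1+a}}\le\frac{2}{T^a}$, both valid for $T\ge1$. This produces the claimed bound $\dfrac{2\cdot 4^{a}(1+a)D}{cT^{1-a}}+\dfrac{2\widehat D c(1+a)}{T^{a}}$. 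No step beyond the summation-by-parts is expected to be delicate; the rest is bookkeeping of constants already verified in Lemma~\ref{lem-avg}.
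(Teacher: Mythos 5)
Your proposal is correct and follows essentially the same route as the paper: the paper likewise multiplies the relation of Lemma~\ref{lem-error-control} by $\gamma_t^{-2}$, handles the non-telescoping Bregman blocks by adding and subtracting $\gamma_{t-1}^{-2}B_\psi(z,x_t)$ and $\gamma_{t-1}^{-2}B_\psi(z,h_t)$ (your Abel summation, using $\gamma_t^{-2}-\gamma_{t-1}^{-2}>0$ and $B_\psi\le D$) to reach $2D\gamma_T^{-2}+(T+1)\widehat D$ over $\sum_{t=0}^T\gamma_t^{-1}$, and then applies Lemma~\ref{lem-step-bd2}(iv) together with $(1+1/T)^{2a}\le 4^a$ and $1+1/T\le 2$. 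All constants and measurability arguments match the paper's.
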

\begin{proof}
    \noindent \textit{Case $\nu > 0$:} We use Lemma~\ref{lem-error-control}, where 
    we multiply both sides of the given relation by $\gamma_t^{-2}$ and obtain for any $t \geq 0$,
    \begin{align}
        &\gamma_t^{-1} \left\la F(z), y_{t+1} - z \right\ra \leq \gamma_t^{-2} B_{\psi}(z,x_t) - \gamma_t^{-2} B_{\psi}(z,x_{t+1}) + \gamma_t^{-2} B_\psi(z,h_{t}) \cr 
       & - \gamma_t^{-2} B_\psi(z,h_{t+1}) +\frac{2^{1+\nu} L_{\nu}^2}{\alpha^{1+\nu}} \left[\left(B_\psi(x_t,y_t) \right)^\nu + \left(B_\psi(y_{t+1},x_t) \right)^\nu \right] \cr
    & + \gamma_t^{-1} \left\la b_{t+1}, y_{t+1} - h_t \right\ra + \frac{4\epsilon_{t}^2 + \frac{9}{2}\epsilon_{t+1}^2}{\alpha} + \frac{8 M_{\nu}^2}{\alpha} . \nonumber
    \end{align}
    We bound the fifth quantity on the right-hand side of the preceding relation using Assumption~\ref{asum-set2} (compact set $X$), giving us $\left(B_\psi(x_t,y_t) \right)^\nu + \left(B_\psi(y_{t+1},x_t) \right)^\nu \leq 2D^\nu$, which when substituted in the preceding relation yields for all $t \geq 0$,
    \begin{align}
        &\gamma_t^{-1} \left\la F(z), y_{t+1} - z \right\ra \leq \gamma_t^{-2} B_{\psi}(z,x_t) - \gamma_t^{-2} B_{\psi}(z,x_{t+1}) + \gamma_t^{-2} B_\psi(z,h_{t}) \cr 
       & - \gamma_t^{-2} B_\psi(z,h_{t+1}) + \gamma_t^{-1} \left\la b_{t+1}, y_{t+1} - h_t \right\ra +\frac{2^{2+\nu} L_{\nu}^2 D^\nu}{\alpha^{1+\nu}} \cr
       & + \frac{4\epsilon_{t}^2 + \frac{9}{2}\epsilon_{t+1}^2}{\alpha} + \frac{8 M_{\nu}^2}{\alpha} . \label{inv-avg-basic-rel}
    \end{align}
    We add and subtract the quantities $\gamma_{t-1}^{-2} \EXP{B_{\psi}(z,x_t)}$ and $\gamma_{t-1}^{-2} \EXP{B_{\psi}(z,h_t)}$ on the right-hand side of relation~\eqref{inv-avg-basic-rel}, and obtain for any $z\in X$ and $t\ge1$,
    \begin{align}
        &\gamma_t^{-1} \left\la F(z), y_{t+1} - z \right\ra \leq \gamma_{t-1}^{-2} B_{\psi}(z,x_t) - \gamma_{t}^{-2} B_{\psi}(z,x_{t+1}) + \gamma_{t-1}^{-2} B_\psi(z,h_{t}) \cr 
       & - \gamma_{t}^{-2} B_\psi(z,h_{t+1}) + \left( \gamma_t^{-2} - \gamma_{t-1}^{-2} \right) \EXP{B_{\psi}(z,x_t)} + \left( \gamma_t^{-2} - \gamma_{t-1}^{-2} \right) \EXP{B_{\psi}(z,h_t)} \cr
       &+ \gamma_t^{-1} \left\la b_{t+1}, y_{t+1} - h_t \right\ra +\frac{2^{2+\nu} L_{\nu}^2 D^\nu}{\alpha^{1+\nu}} + \frac{4\epsilon_{t}^2 + \frac{9}{2}\epsilon_{t+1}^2}{\alpha} + \frac{8 M_{\nu}^2}{\alpha} . \nonumber
    \end{align}
    Since $\gamma_t <\gamma_{t-1}$, we have $\gamma_{t}^{-2} - \gamma_{t-1}^{-2} > 0$. Moreover, $\EXP{B_{\psi}(z,x_t)}\le D$ and $\EXP{B_{\psi}(z,h_t)}\le D$ by the boundedness of $X$ (Assumption~\ref{asum-set2}), implying that for any $z\in X$ and $t\ge1$,
    \begin{align*}
        &\gamma_t^{-1} \left\la F(z), y_{t+1} - z \right\ra \leq \gamma_{t-1}^{-2} B_{\psi}(z,x_t) - \gamma_{t}^{-2} B_{\psi}(z,x_{t+1}) + \gamma_{t-1}^{-2} B_\psi(z,h_{t}) \cr 
       & - \gamma_{t}^{-2} B_\psi(z,h_{t+1}) + 2 \left( \gamma_t^{-2} - \gamma_{t-1}^{-2} \right) D + \gamma_t^{-1} \left\la b_{t+1}, y_{t+1} - h_t \right\ra \cr
       &+\frac{2^{2+\nu} L_{\nu}^2 D^\nu}{\alpha^{1+\nu}} + \frac{4\epsilon_{t}^2 + \frac{9}{2}\epsilon_{t+1}^2}{\alpha} + \frac{8 M_{\nu}^2}{\alpha} .
    \end{align*}
    Summing the preceding relation from $t = 1$ to $T$ for any $T \geq 1$, we obtain
    \begin{align}
        &\sum_{t=1}^T \gamma_t^{-1} \left\la F(z), y_{t+1} - z \right\ra \leq \gamma_{0}^{-2} B_{\psi}(z,x_1) - \gamma_{T}^{-2} B_{\psi}(z,x_{T+1}) + \gamma_{0}^{-2} B_\psi(z,h_{1}) \cr 
       & - \gamma_{T}^{-2} B_\psi(z,h_{T+1}) + 2 \left( \gamma_T^{-2} - \gamma_{0}^{-2} \right) D + \sum_{t=1}^T \gamma_t^{-1} \left\la b_{t+1}, y_{t+1} - h_t \right\ra \cr
       &+ \left( \frac{2^{2+\nu} L_{\nu}^2 D^\nu}{\alpha^{1+\nu}} + \frac{4\epsilon_{t}^2 + \frac{9}{2}\epsilon_{t+1}^2}{\alpha} + \frac{8 M_{\nu}^2}{\alpha} \right) T . \label{inv-avg-basic-rel2}
    \end{align}
    Letting $t=0$ in relation~\eqref{inv-avg-basic-rel}, we obtain for $t =0$,
    \begin{align*}
        &\gamma_0^{-1} \left\la F(z), y_{1} - z \right\ra \leq \gamma_0^{-2} B_{\psi}(z,x_0) - \gamma_0^{-2} B_{\psi}(z,x_{1}) + \gamma_0^{-2} B_\psi(z,h_{0}) \cr 
       & - \gamma_0^{-2} B_\psi(z,h_{1}) + \gamma_0^{-1} \left\la b_{1}, y_{1} - h_0 \right\ra +\frac{2^{2+\nu} L_{\nu}^2 D^\nu}{\alpha^{1+\nu}} \cr
       & + \frac{4\epsilon_{t}^2 + \frac{9}{2}\epsilon_{t+1}^2}{\alpha} + \frac{8 M_{\nu}^2}{\alpha} .
    \end{align*}
    Summing the preceding relation and the relation in~\eqref{inv-avg-basic-rel2}, we obtain
    \begin{align}
        &\sum_{t=0}^T \gamma_t^{-1} \left\la F(z), y_{t+1} - z \right\ra \leq \gamma_{0}^{-2} B_{\psi}(z,x_0) - \gamma_{T}^{-2} B_{\psi}(z,x_{T+1}) + \gamma_{0}^{-2} B_\psi(z,h_0) \cr 
       & - \gamma_{T}^{-2} B_\psi(z,h_{T+1}) + 2 \left( \gamma_T^{-2} - \gamma_{0}^{-2} \right) D + \sum_{t=0}^T \gamma_t^{-1} \left\la b_{t+1}, y_{t+1} - h_t \right\ra \cr
       &+ \left( \frac{2^{2+\nu} L_{\nu}^2 D^\nu}{\alpha^{1+\nu}} + \frac{4\epsilon_{t}^2 + \frac{9}{2}\epsilon_{t+1}^2}{\alpha} + \frac{8 M_{\nu}^2}{\alpha} \right) (T+1) . \nonumber
    \end{align}
    Using the compactness of set $X$ (Assumption~\ref{asum-set2}) we estimate the first and third terms,  and dropping the non-positive terms on the right-hand side of the preceding relation, after simplifying the resulting relation, we obtain
    \begin{align}
        &\sum_{t=0}^T \gamma_t^{-1} \left\la F(z), y_{t+1} - z \right\ra \leq 2D \gamma_T^{-2} + \sum_{t=0}^T \gamma_t^{-1} \left\la b_{t+1}, y_{t+1} - h_t \right\ra \cr
       &+ \left( \frac{2^{2+\nu} L_{\nu}^2 D^\nu}{\alpha^{1+\nu}} + \frac{4\epsilon_{t}^2 + \frac{9}{2}\epsilon_{t+1}^2}{\alpha} + \frac{8 M_{\nu}^2}{\alpha} \right) (T+1) . \label{inv-avg-basic-rel3}
    \end{align}
    The left-hand side of the preceding relation can be simplified as
    \begin{align}
        \sum_{t=0}^T \gamma_t^{-1} \left\la F(z), y_{t+1} - z \right\ra = \left(\sum_{t=0}^T \gamma_t^{-1} \right) \left\la F(z), \frac{\sum_{t=0}^T \gamma_t^{-1} y_{t+1}}{\sum_{t=0}^T \gamma_t^{-1}} - z \right\ra . \nonumber
    \end{align}
    Using the preceding relation back in~\eqref{inv-avg-basic-rel3} and taking the  maximum over $z \in X$, we get a bound for the dual gap function value (cf.~\eqref{dg-gap}), which after taking the total expectation yields
    \begin{align}
       &\EXP{G \left(\frac{\sum_{t=0}^T \gamma_t^{-1} y_{t+1}}{\sum_{t=0}^T \gamma_t^{-1}} \right)} \leq \frac{1}{\left(\sum_{t=0}^T \gamma_t^{-1} \right)} \Bigg[2D \gamma_T^{-2} + \EXP{\sum_{t=0}^T \gamma_t^{-1} \!\left\la b_{t+1}, y_{t+1} \!-\! h_t \right\ra \!} \cr
       &+ \left( \frac{2^{2+\nu} L_{\nu}^2 D^\nu}{\alpha^{1+\nu}} + \frac{4 \EXP{\epsilon_{t}^2} + \frac{9}{2}\EXP{\epsilon_{t+1}^2}}{\alpha} + \frac{8 M_{\nu}^2}{\alpha} \right) (T+1) \Bigg] . \nonumber
    \end{align}
By law of iterated expectation, the second term on the right-hand side of the preceding relation is zero. Using Assumption~\ref{asum-stoc-bias-var} we have $\EXP{\epsilon_{t}^2} \leq \sigma^2$ and $\EXP{\epsilon_{t+1}^2} \leq \sigma^2$, thus leading to
\begin{align}
    \EXP{G \left(\frac{\sum_{t=0}^T \gamma_t^{-1} y_{t+1}}{\sum_{t=0}^T \gamma_t^{-1}} \right)} \leq \frac{2D \gamma_T^{-2} + \widehat D (T+1)}{\left(\sum_{t=0}^T \gamma_t^{-1} \right)} , \nonumber
\end{align}
where the quantity $\widehat D$ for $\nu>0$ is defined in Lemma~\ref{lem-avg} (cf. \eqref{D_hat}).
We take a diminishing step size $\gamma_t = \frac{c}{(t+1)^a}$ with some constant $c>0$ and $0<a<1$. Hence, $\frac{1}{\gamma_T^2} = \frac{(T+1)^{2a}}{c^2}$. Using these relations and Lemma~\ref{lem-step-bd2}(iv), we estimate the denominator on the right-hand side of the preceding relation and obtain
\begin{align}
    \EXP{G \left(\frac{\sum_{t=0}^T \gamma_t^{-1} y_{t+1}}{\sum_{t=0}^T \gamma_t^{-1}} \right)} \leq \frac{2D (1+a) \left(1+\frac{1}{T}\right)^{2a}}{c T^{1-a}} + \frac{\widehat D c (1+a) \left(1+\frac{1}{T}\right)}{T^{a}} . \nonumber
\end{align}
Since $T \geq 1$, we have  $1+\frac{1}{T} \leq 2$ and the stated relation follows.\\
\noindent \textit{Case $\nu = 0$:} We use Lemma~\ref{lem-error-control} with  $\nu = 0$ and follow the same line of analysis as for the case $\nu > 0$. \hfill$\square$
\end{proof}
Theorem~\ref{thm-inv-avg} provides the convergence rate of the dual gap function with respect to the number of iterates $T$, which depends on the terms $O(1/T^{1-a})$ and $O(1/T^a)$. The convergence rate of the first term increases when the constant $a$ decreases and vice versa for the second term. Next, we state a remark showing the optimal convergence rate and the step size selection to achieve it. 
\begin{remark}\label{rem-inv-avg}
    Letting $a = \frac{1}{2}$ in Theorem~\ref{thm-inv-avg}, the step size reduces to $
        \gamma_t = \frac{c}{\sqrt{t+1}}$ with $c > 0$. With such a step size selection, the convergence rate is
    \begin{align}
        \EXP{G \left(\frac{\sum_{t=0}^T \frac{y_{t+1}}{\gamma_t}}{\sum_{t=0}^T \frac{1}{\gamma_t}} \right)} \leq \frac{6 D}{c T^{\frac{1}{2}}} + \frac{3 \widehat D c}{T^{\frac{1}{2}}} , \nonumber
    \end{align}
    where $\max_{x,z \in X} B_\psi(z,x)\le D$ (cf. \eqref{bounded-set}) and the constant $\widehat D$ is given in relation~\eqref{D_hat}. Compared to \cite{klimza2024universal} and \cite{dang2015convergence}, which analyze the Korpelevich method, our algorithm employs the Popov method and addresses scenarios where the mapping $F$ satisfies relation~\eqref{mapping_growth}, yet maintains the same convergence rate $O(1/\sqrt{T})$ with parameter-free diminishing step sizes. Our analysis accommodates arbitrary growth conditions (including discontinuities) of the mapping $F$ over a compact set $X$. 
    \hfill$\square$
\end{remark}
The impact of the constant $c$ in the step size on the convergence rates is dicussed in the following remark.
\begin{remark}\label{rem-optimal-c}
The upper bounds on the convergence rate of the expected dual gap function in Theorems~\ref{thm-holder-known-itr}, \ref{thm-hol-conv-unknown-itr}, and \ref{thm-inv-avg} have the following form
\begin{align}
    \frac{f_1(D,a)}{c f_2(T,a)} + \frac{f_3(\sigma, M_\nu, L_\nu, \nu, a)\, c}{f_4(T,a)}, \label{exp_dg_up_bd}
\end{align}
where $f_1(\cdot), f_2(\cdot), f_3(\cdot)$, and $f_4(\cdot)$ are functions with the exact forms depending on the respective theorems. To obtain optimal value of $c$, we differentiate with respect to $c$ and set the derivative to zero, yielding
\begin{align}
    c^* = \sqrt{\frac{f_1(D,a)}{f_3(\sigma, M_\nu, L_\nu, \nu, a)} \cdot \frac{f_4(T,a)}{f_2(T,a)}}. \nonumber
\end{align}
While $f_1$, $f_2$, and $f_4$ are computable using the known quantities such as the step-size parameter $a$, the iteration number $T$, and the set $X$ diameter $D$, the function $f_3$ may be difficult to evaluate in practice due to possibly unknown parameters such as $\sigma$, $L_\nu$, $M_\nu$, and $\nu$. 

From~\eqref{exp_dg_up_bd}, we see that smaller values of $c$ worsen the first term but improve the second, while larger values have the opposite effect. Intuitively, when the noise parameter $\sigma$ is high, a smaller step size (i.e., a smaller $c$) is preferable. If all problem parameters are known, one can use the optimal $c^*$. Finally, we can choose $a$ so that $f_2(T,a) \geq f_4(T,a)$, which ensures $\frac{f_4(T,a)}{f_2(T,a)} \leq 1$, thus obtaining a bound for the the optimal value $c^*$.
\hfill$\square$
\end{remark}

\section{Deterministic Popov Mirror-Prox}\label{sec:deterministic_popov}

In this section, we study the deterministic Popov mirror-prox algorithm and its convergence properties. The method chooses an arbitrary initialization $x_0 = y_0 \in X$ and generates two iterate sequences, ${y_t}$ and ${x_t}$, similar to the stochastic case (cf.~\eqref{stoc-pov1}, \eqref{stoc-pov2}), with the stochastic mapping $\widehat F(\cdot, \xi)$ replaced by the deterministic mapping $F(\cdot)$, i.e.,
\begin{align}
    &y_{t+1} = P_{x_t}(\gamma_t F(y_t)) = \argmin_{z \in X} [\la \gamma_t F(y_{t}) - \nabla \psi(x_{t}), z \ra + \psi(z)] , \label{det-pov1}\\
    &x_{t+1} = P_{x_t}(\gamma_t F(y_{t+1}))  = \argmin_{z \in X} [\la \gamma_t F(y_{t+1}) - \nabla \psi(x_{t}), z \ra + \psi(z)] , \label{det-pov2}
\end{align}
where $\gamma_t > 0$ is the step size of the method and $P_x(\cdot): \R^n \rightarrow X$ is the proximal mapping defined as
\begin{align}
    P_x(\zeta) = \argmin_{z \in X} \{\psi(z) + \la z, \zeta - \nabla\psi(x) \ra\}, \;\; \forall x \in X \; \text{and } \zeta\in \R^n . \label{prox-mapping}
\end{align}


\subsection{Convergence of the dual gap function for mappings with $\nu \in [0,\infty)$}
We present the convergence rate of the deterministic Popov mirror-prox algorithm in~\eqref{det-pov1}--\eqref{det-pov2} with respect to the dual gap function, under the assumptions of a bounded set \( X \) (Assumption~\ref{asum-set2}) and a monotone mapping \( F \) (Assumption~\ref{asum-monotone}). The rate follows from the stochastic case by setting \( \sigma = 0 \), though some constants can be refined. In the deterministic setting, Lemma~\ref{lem-gen-stoc} simplifies as the terms involving \( w_1 \), \( b_t \), and \( \epsilon_t \) vanish for all \( t \geq 0 \). Since the derivations require only minor modifications, we state the result directly as a corollary.
\begin{corollary}\label{cor_det}
    Under Assumptions~\ref{asum-set1}--\ref{asum-set2}, with the constants $D$ and $\bar D$ given as
    \begin{align*}
        D\ge \max_{x,z \in X} B_\psi(z,x) \quad \text{and} \quad \bar D = \begin{cases}
        \frac{3 \times 2^{\nu} L_{\nu}^2 D^{\nu}}{\alpha^{1+\nu}} + \frac{6 M_{\nu}^2}{\alpha}  \quad &\text{when $\nu >0 $,}\\
         \frac{(L_0+M_0)^2}{2 \alpha}  \quad &\text{when $\nu = 0$}. 
        \end{cases} \nonumber
    \end{align*}
    for the iterates of the deterministic Popov mirror-prox, we have:\\
    (i) With known total number of iterations $T \geq 0$ and constant $c>0$,
    \begin{align*}
        &G \left( \frac{1}{T+1} \sum_{t=0}^T y_{t+1} \right) \leq \frac{D + c^2 \bar D}{c \sqrt{T+1}} \ \hbox{when $\gamma_t =\gamma=\frac{c}{\sqrt{T+1}}$ for all $t=0,\ldots T$}, \nonumber\\
        & G \left( \frac{\sum_{t= \lceil \frac{T}{2} \rceil}^T \gamma_t y_{t+1}}{\sum_{t= \lceil \frac{T}{2} \rceil}^T \gamma_t} \right) \leq \frac{2D + 2 \ln(4) c^2 \bar D}{c \sqrt{T+1}} \ \hbox{when $\gamma_t =\frac{c}{\sqrt{t+1}}$ for $t=0,\ldots T$}. \nonumber
    \end{align*}
    (ii) With diminishing step size $\gamma_t = \frac{c}{(t+1)^a}$, where constants $c>0$ and $0<a<1$, the averaged iterate with $\gamma_t$ as weights yields the following rate for all $T \geq 0$,
    \begin{align*}
        G \left( \frac{\sum_{t=0}^T \gamma_t y_{t+1}}{\sum_{t=0}^T \gamma_t} \right) \leq
        \begin{cases}
            \frac{D}{c(T+1)^{1-a}} + \frac{\bar D c }{(1-2a) (T+1)^a} \quad &\text{when $0<a<\frac{1}{2}$,}\\
            \frac{D}{c(T+1)^{1-a}} + \frac{\bar D c (1+\ln(T+1)) }{(T+1)^{1-a}}  \quad &\text{when $a=\frac{1}{2}$,}\\
            \frac{D}{c(T+1)^{1-a}} + \frac{2a \bar D c }{(2a-1) (T+1)^{1-a}}  \quad &\text{when $\frac{1}{2}<a<1$} .
        \end{cases} \nonumber
    \end{align*}
    (iii) Using the same step size $\gamma_t = \frac{c}{(t+1)^a}$ with constants $c>0$ and $0<a<1$, the averaged iterate with $\gamma_t^{-1}$ as weights yields the following rate for all $T \geq 1$,
    \begin{align*}
        G \left(\frac{\sum_{t=0}^T \gamma_t^{-1} y_{t+1}}{\sum_{t=0}^T \gamma_t^{-1}} \right) \leq \frac{4^a (1+a) D}{c T^{1-a}} + \frac{2 \bar D c (1+a)}{T^a} .
    \end{align*}
\end{corollary}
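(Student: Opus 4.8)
The plan is to run, essentially verbatim, the arguments of Theorems~\ref{thm-holder-known-itr}, \ref{thm-hol-conv-unknown-itr}, and \ref{thm-inv-avg} with $\sigma = 0$, while tracking constants more carefully to obtain the sharper $\bar D$.

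First I would record the deterministic analog of Lemma~\ref{lem-error-control}. Since $\widehat F(\cdot,\xi) = F(\cdot)$, the sample errors vanish, $b_t = 0$ and $\epsilon_t = 0$ for all $t$, so the Young-inequality split that produced the $w_1$-term in Lemma~\ref{lem-gen-stoc} is unnecessary, and the auxiliary sequence $\{h_t\}$ of Lemma~\ref{lem-juditsky} is not needed at all. Repeating the proof of Lemma~\ref{lem-gen-stoc} with $w_1$ deleted (so the coefficient of $B_\psi(x_{t+1},y_{t+1})$ is $-(1-\tfrac{w_2+w_3+w_4}{\alpha})$ when $\nu>0$ and $-(1-\tfrac{w_2}{\alpha})$ when $\nu=0$), then choosing $w_2 = w_3 = w_4 = \alpha/3$ (resp.\ $w_2 = \alpha$) to annihilate that coefficient, bounding the $\nu$-th powers by $B_\psi(\cdot,\cdot) \le D$ (cf.~\eqref{bounded-set}), dropping the non-positive term $-B_\psi(y_{t+1},x_t)$, and invoking monotonicity (Assumption~\ref{asum-monotone}) to convert $\gamma_t\la F(y_{t+1}), z-y_{t+1}\ra$ into $-\gamma_t\la F(z), y_{t+1}-z\ra$ and rearranging, I arrive at the one-step estimate
\[
    \gamma_t \la F(z), y_{t+1} - z \ra \le B_\psi(z,x_t) - B_\psi(z,x_{t+1}) + \bar D\,\gamma_t^2, \qquad \forall z \in X,\ t \ge 0,
\]
with $\bar D$ exactly as stated. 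The budget freed by removing $w_1$ is what lets $w_2,w_3,w_4$ be $\alpha/3$ rather than $\alpha/4$ (giving $3\times2^\nu$ and $6$ in place of $2^{2+\nu}$ and $8$), and the absence of the $\la b_{t+1},\cdot\ra$ term removes the $h_t$-contribution, hence there is no doubling of the $D$-term.

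Given this inequality, the three parts follow by the same summation and step-size bookkeeping as in the stochastic proofs. For (i): sum over $t = 0,\dots,T$ (constant $\gamma$) or $t = \lfloor T/2\rfloor,\dots,T$, telescope the $B_\psi$ terms, discard $-B_\psi(z,x_{T+1}) \le 0$, factor $\sum_t\gamma_t$ out of the left side by linearity, take $\sup_{z\in X}$, bound $B_\psi(z,x_{\tilde t}) \le D$, and apply Lemma~\ref{lem-step-bound} for the $\gamma_t = c/\sqrt{t+1}$ case; the $D$-coefficient is now $1$ (resp.\ $2$) because there is no $h_t$-term. For (ii): sum over $t = 0,\dots,T$, telescope, divide by $\sum_{t=0}^T\gamma_t$, and plug in Lemma~\ref{lem-step-bd2}(i) for the denominator and Lemma~\ref{lem-step-bd2}(ii) with $p = 0$ for $\sum_{t=0}^T\gamma_t^2$, then use $(T+2)^{1-2a}/(T+1)^{1-a} \le 2^{1-2a}/(T+1)^a$ as in the proof of Theorem~\ref{thm-hol-conv-unknown-itr}. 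For (iii): multiply the one-step estimate by $\gamma_t^{-2}$, apply the add-and-subtract trick with $\gamma_{t-1}^{-2}B_\psi(z,x_t)$ using $\gamma_t^{-2} - \gamma_{t-1}^{-2} > 0$ and $B_\psi(z,x_t) \le D$, telescope from $t = 0$ to $T$, retain $2D\gamma_T^{-2} + \bar D(T+1)$, divide by $\sum_{t=0}^T\gamma_t^{-1}$, lower-bound the denominator via Lemma~\ref{lem-step-bd2}(iv), and use $1 + 1/T \le 2$ for $T \ge 1$ --- this is the argument of Theorem~\ref{thm-inv-avg} with every $\epsilon_t$ and every expectation erased.

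The main obstacle is purely bookkeeping: one must verify that deleting the noise machinery genuinely releases the budget $2w_1$ in Lemma~\ref{lem-gen-stoc} so that the choice $w_2 = w_3 = w_4 = \alpha/3$ is admissible, and that the absence of the $\{h_t\}$ sequence really does remove the extra factor on $D$; only then are the constant $\bar D$ and the $D$-coefficients reported in the corollary the correct sharpened values rather than the naive $\sigma\to0$ limit of the stochastic bounds. All remaining steps are routine specializations of the proofs already given.
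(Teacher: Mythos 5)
Your proposal is correct and follows exactly the route the paper intends: the paper itself omits the proof, noting only that Lemma~\ref{lem-gen-stoc} simplifies because the $w_1$-, $b_t$-, and $\epsilon_t$-terms vanish, and your reconstruction (choosing $w_2=w_3=w_4=\alpha/3$ for $\nu>0$, $w_2=\alpha$ for $\nu=0$, dropping the $\{h_t\}$ sequence, then repeating the summation arguments of Theorems~\ref{thm-holder-known-itr}--\ref{thm-inv-avg}) reproduces precisely the stated $\bar D$ and the halved $D$-coefficients. The bookkeeping checks out in all three parts.
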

Corollary~\ref{cor_det} shows that the rate with respect to the dual gap function is at best \( O(1/\sqrt{T}) \) when \( M_\nu > 0 \) for any \( \nu > 0 \). This rate follows directly from Corollary~\ref{cor_det}(i) or from Corollary~\ref{cor_det}(iii) by setting \( a = 1/2 \) (cf. Remark~\ref{rem-inv-avg}), corresponding to a step size independent of problem parameters.

\subsection{Convergence rate for mappings with $M_\nu =0$ and $\nu \in (0,1]$}\label{sub-sec-special-case}

For this case, an improved convergence rate can be established with respect to the dual gap function. Additionally, the performance of the algorithm can be evaluated using the residual function as a  merit function, defined by~\cite{dang2015convergence}
\begin{align}
    R_\gamma(x) = \frac{1}{\gamma} [x - P_x (\gamma F(x))], \quad \forall x \in X , \label{residual}
\end{align}
where $\gamma > 0$ is  a step size and $P_x(\cdot)$ is the proximal function (cf.~\eqref{prox-mapping}). For example, when $\psi(\cdot) = \frac{1}{2}\|\cdot\|^2$, then the residual function is
\begin{align}
    R_\gamma(x) = \frac{1}{\gamma} [x- \Pi_X[x-\gamma F(x)]] , \quad \forall x \in X, \nonumber
\end{align}
where $\Pi_X[\cdot]$ denotes the projection operator on the set $X$. Note that $x=x^*$ is a solution of the VI$(X,F)$ (cf.~\eqref{strong-vi}) if and only if $x^* = P_x (\gamma F(x^*))$ \cite[Lemma 1]{dang2015convergence}, which further implies $\|R_\gamma(x^*)\| = 0$ \cite[Lemma 2]{dang2015convergence}. 

Next, we establish a lemma that refines Lemma~\ref{lem-gen-stoc} to be used later in the convergence rate analysis. To do so, we make use of the following auxiliary result.
\begin{lemma}\label{lem_max_val}
    Let $q \geq 0$, $s > 0$, and $\nu \in (0,1)$ be constants. 
    Then, for the function $D(d):=q d^\nu - s d$, we have
    \begin{align*}
        D(\bar d) \leq \max_{d \ge0} D(d) = (1-\nu) \left( \frac{q \nu^\nu}{s^\nu} \right)^{\frac{1}{1-\nu}} \qquad \text{for any $\bar d \ge0$}.
    \end{align*}
\end{lemma}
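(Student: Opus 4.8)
The plan is to prove Lemma~\ref{lem_max_val} by a straightforward single-variable calculus argument: maximize the function $D(d) = qd^\nu - sd$ over $d \ge 0$. First I would dispose of the degenerate case $q = 0$, where $D(d) = -sd \le 0$ for all $d \ge 0$, so $\max_{d \ge 0} D(d) = 0 = D(0)$, and the claimed formula $(1-\nu)(q\nu^\nu/s^\nu)^{1/(1-\nu)}$ indeed evaluates to $0$ when $q = 0$; so the bound $D(\bar d) \le \max_{d \ge 0} D(d)$ holds trivially for any $\bar d \ge 0$.

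For $q > 0$, I would observe that $D$ is continuous on $[0,\infty)$ with $D(0) = 0$, and since $\nu \in (0,1)$ we have $D(d) = qd^\nu - sd \to -\infty$ as $d \to \infty$ (the linear term dominates the sublinear term). Hence $D$ attains a global maximum on $[0,\infty)$ at some interior point $d^*> 0$. Differentiating, $D'(d) = \nu q d^{\nu - 1} - s$, which vanishes exactly at $d^* = (\nu q / s)^{1/(1-\nu)}$; since $D'$ is strictly decreasing on $(0,\infty)$ (because $\nu - 1 < 0$), this critical point is the unique maximizer. Substituting back, $D(d^*) = q (d^*)^\nu - s d^* = (d^*)^\nu \bigl(q - s (d^*)^{1-\nu}\bigr) = (d^*)^\nu \bigl(q - s \cdot \nu q/s\bigr) = (d^*)^\nu q (1-\nu)$, and plugging in $(d^*)^\nu = (\nu q/s)^{\nu/(1-\nu)}$ gives $D(d^*) = (1-\nu) q (\nu q/s)^{\nu/(1-\nu)} = (1-\nu)\bigl(q \nu^\nu / s^\nu\bigr)^{1/(1-\nu)}$ after collecting the powers of $q$, $\nu$, and $s$. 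Since $D(\bar d) \le \max_{d \ge 0} D(d) = D(d^*)$ by definition of the maximum, the inequality follows for every $\bar d \ge 0$.

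This proof has no genuine obstacle; the only thing requiring mild care is the bookkeeping of exponents in the final substitution, namely checking that $q \cdot (\nu q/s)^{\nu/(1-\nu)}$ simplifies to $(q\nu^\nu/s^\nu)^{1/(1-\nu)}$ by writing $q = q^{(1-\nu)/(1-\nu)}$ and combining the fractional powers of $q$, and verifying the endpoint case $\nu = 1$ is correctly excluded (which it is, by hypothesis $\nu \in (0,1)$, so $1-\nu > 0$ and all exponents are well-defined). I would present the argument in three short steps: (1) reduce to $q > 0$ and establish existence of an interior maximizer via the limiting behavior; (2) locate the unique critical point through the first-order condition and monotonicity of $D'$; (3) evaluate $D$ at that point and simplify. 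The conclusion $D(\bar d) \le \max_{d \ge 0} D(d)$ is then immediate since $\bar d \in [0,\infty)$.
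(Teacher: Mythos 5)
Your proof is correct and follows essentially the same route as the paper: differentiate $D$, locate the unique critical point $d^*=(\nu q/s)^{1/(1-\nu)}$, and substitute back to obtain $(1-\nu)(q\nu^\nu/s^\nu)^{1/(1-\nu)}$. The only difference is that you add some welcome care the paper omits (the $q=0$ case, existence of the maximizer from the limit at infinity, and the second-order/monotonicity check confirming it is a maximum rather than the paper's loosely named ``point of inflexion'').
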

\begin{proof}
    We differentiate $D(d)$ with respect to $d$, and get the point of inflexion as
   \begin{align*}
        d = \left( \frac{q \nu}{s} \right)^{\frac{1}{1-\nu}} .
    \end{align*}
    Upon substituting the preceding value of $d$ in the function $D(d)$, after some algebraic simplifications the desired relation of the lemma is obtained.
    \hfill$\square$
\end{proof}

Next, we present the generic lemma that makes use of Lemma~\ref{lem_max_val}. While~\cite{dang2015convergence} provides useful intuition for the analysis, our analysis accounts for simplifying additional error terms.
\begin{lemma}\label{lem-det-special}
    Let Assumption~\ref{asum-set1} and  Assumption~\ref{asum-holder}, with $M_\nu = 0$, hold. Then, for 
    the deterministic Popov mirror-prox algorithm, we have for all $T \geq 0$,\\
        i) For $\nu \in (0,1)$, and with some constants $0\leq r<1$ and $w_5>1$, 
    \begin{align}
        &\sum_{t=0}^T \gamma_t \la F(y_{t+1}), y_{t+1} - z \ra + \frac{\alpha r}{2} \left(1-\frac{1}{w_5} \right) \sum_{t=0}^T \gamma_t^2 \|R_{\gamma_t}(x_t)\|^2 \leq B_{\psi}(z,x_0) \nonumber\\
        & + (1-\nu) \!\left[ \left( \frac{2^{2\nu - 1} L_{\nu}^2 \left(r(w_5-1) \alpha + 4 \right) \nu^\nu }{\alpha^{1+\nu}} \right)^{\!\!\frac{1}{1-\nu}} \!\!\!+ \left( \frac{2^{1+\nu} L_{\nu}^2 \nu^\nu}{\alpha^{1+\nu} (1-r)^\nu}   \right)^{\!\!\frac{1}{1-\nu}} \right] \!\sum_{t=0}^{T} \! \gamma_t^{\frac{2}{1-\nu}} ; \nonumber
    \end{align}
    ii) For $\nu = 1$ and $\gamma_t \leq \min \left( \frac{\alpha}{\sqrt{2} L_1 (\sqrt{r(w_5-1) \alpha + 4})} , \frac{\alpha \sqrt{1-r}}{2 L_1} \right)$,
    \begin{align}
        &\sum_{t=0}^T \gamma_t \la F(y_{t+1}), y_{t+1} - z \ra + \frac{\alpha r}{2} \left(1-\frac{1}{w_5} \right) \sum_{t=0}^T \gamma_t^2 \|R_{\gamma_t}(x_t)\|^2 \leq B_{\psi}(z,x_0) . \nonumber
    \end{align}
    iii) For $\nu \in (0,1)$, assuming Assumption~\ref{asum-set2}, $\gamma_t \leq \gamma_{t-1}$, and any $T \geq 1$,
    \begin{align}
        \sum_{t=0}^T \frac{1}{\gamma_t} \la F(y_{t+1}), y_{t+1} \!-\! z \ra \leq \frac{D}{\gamma_T^2} + (1-\nu) \left(2^{\frac{\nu}{1-\nu}} \!+\! 1 \right) \left(\frac{2^{1+\nu} L_\nu^2 \nu^\nu}{\alpha^{1+\nu}} \right)^{\frac{1}{1-\nu}} \sum_{t=0}^{T} \gamma_t^{\frac{2\nu}{1-\nu}} . \nonumber
    \end{align}
\end{lemma}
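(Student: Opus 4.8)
The plan is to start from the already-established Lemma~\ref{lem-gen-stoc} specialized to the deterministic case (so $b_t = 0$, $\epsilon_t = 0$, and the $w_1$ terms drop), and then carefully convert the Bregman-divergence terms into residual-function terms using the identity between proximal steps and the residual map~\eqref{residual}. The key observation is that $y_{t+1} = P_{x_t}(\gamma_t F(y_t))$ and $x_{t+1} = P_{x_t}(\gamma_t F(y_{t+1}))$, so both $B_\psi(x_{t+1},y_{t+1})$ (or rather the combination $B_\psi(y_{t+1},x_t)$, $B_\psi(x_{t+1},x_t)$) and the displacement $x_t - x_{t+1}$ relate to $\gamma_t R_{\gamma_t}(x_t)$; in particular $\|x_t - x_{t+1}\| = \gamma_t \|R_{\gamma_t}(x_t)\|$ by definition. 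The goal is to route a fraction $r$ of the negative Bregman terms into the $\|R_{\gamma_t}(x_t)\|^2$ reservoir (this is where $r$ and $w_5$ enter: $w_5>1$ leaves a small slack $1-1/w_5$ so that after absorbing cross terms via Young's inequality one still has a clean nonnegative multiple of $\sum \gamma_t^2\|R_{\gamma_t}(x_t)\|^2$), and dump the remaining $(1-r)$ fraction against the $B_\psi(x_{t+1},y_{t+1})$ and $B_\psi(y_{t+1},x_t)$ terms already present in Lemma~\ref{lem-gen-stoc}.

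First I would write out the deterministic version of Lemma~\ref{lem-gen-stoc} for $\nu\in(0,1]$, keeping the norm term $\|x_{t+1}-y_{t+1}\|^2$ explicit rather than immediately converting it to $B_\psi$, and similarly retain $B_\psi(y_{t+1},x_t)\ge \tfrac{\alpha}{2}\|y_{t+1}-x_t\|^2$. The two "bad" terms are $\gamma_t^2 L_\nu^2(\tfrac{2}{\alpha}B_\psi(x_t,y_t))^\nu$ and $\gamma_t^2 L_\nu^2(\tfrac{2}{\alpha}B_\psi(y_{t+1},x_t))^\nu$, plus whatever comes from re-expanding in terms of $\|R_{\gamma_t}(x_t)\|$. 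The term $\|y_{t+1}-x_t\| = \gamma_t\|R_{\gamma_t}(x_t)\|$ exactly, and $\|x_{t+1}-y_{t+1}\|\le \|x_{t+1}-x_t\| + \|x_t-y_{t+1}\|$; since $\|x_{t+1}-x_t\|$ and $\|y_{t+1}-x_t\|$ are controlled by the nonexpansiveness-type bound on $P_{x_t}$ and by $\gamma_t\|F(y_t)-F(y_{t+1})\|_* \le \gamma_t L_\nu\|y_t-y_{t+1}\|^\nu$, one can telescope or bound $B_\psi(x_t,y_t)$ at step $t$ by quantities at step $t$. The crucial manipulation is applying Lemma~\ref{lem_max_val} with $d = \|R_{\gamma_t}(x_t)\|^2$ (or $\|y_{t+1}-x_t\|^2$), $q$ absorbing the $\gamma_t^{2}L_\nu^2(\tfrac2\alpha)^\nu$ prefactor, and $s$ being the available coefficient $\tfrac{\alpha r}{2}(1-\tfrac1{w_5})$ (resp.\ $1-r$ against the $B_\psi$ terms): this trades the concave term $q\,d^\nu$ for a linear term $s\,d$ plus a constant $(1-\nu)(q\nu^\nu/s^\nu)^{1/(1-\nu)}$, and the $\gamma_t$-exponent in that constant works out to $\gamma_t^{2/(1-\nu)}$ for part~(i). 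Summing over $t=0,\dots,T$ then telescopes the $B_\psi(z,x_t)-B_\psi(z,x_{t+1})$ chain down to $B_\psi(z,x_0)$ (dropping the nonnegative final term $-B_\psi(z,x_{T+1})$), and monotonicity of $F$ (or rather just the bare inequality $\la F(y_{t+1}),y_{t+1}-z\ra$ staying on the left — note no monotonicity is actually invoked here since $F(y_{t+1})$, not $F(z)$, appears) leaves exactly the claimed inequality.

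For part~(ii) with $\nu=1$, the term $q\,d^\nu = q\,d$ is already linear, so Lemma~\ref{lem_max_val} doesn't apply; instead the stepsize restriction $\gamma_t \le \min\big(\tfrac{\alpha}{\sqrt2 L_1\sqrt{r(w_5-1)\alpha+4}},\,\tfrac{\alpha\sqrt{1-r}}{2L_1}\big)$ is chosen precisely so that the coefficient of $d = \|R_{\gamma_t}(x_t)\|^2$ (resp.\ $\|y_{t+1}-x_t\|^2$) coming from the bad term $\gamma_t^2 L_1^2(\tfrac2\alpha)\,d$ is dominated by the available reservoir coefficient — the two square roots in the min correspond to the two absorptions ($r$-fraction into the residual sum with slack $1-1/w_5$, and $(1-r)$-fraction into the $B_\psi$ terms). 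After absorption, the constant term vanishes entirely, giving the clean bound $\le B_\psi(z,x_0)$. Part~(iii) is the "inverse stepsize weighted" analogue: I would multiply the per-step deterministic inequality by $\gamma_t^{-2}$, use the telescoping-with-a-gap trick from the proof of Theorem~\ref{thm-inv-avg} (adding and subtracting $\gamma_{t-1}^{-2}B_\psi(z,x_t)$, bounding the increments $\gamma_t^{-2}-\gamma_{t-1}^{-2}\ge0$ via $B_\psi \le D$ from Assumption~\ref{asum-set2}), so that the telescoped Bregman contribution collapses to $D/\gamma_T^2$; then apply Lemma~\ref{lem_max_val} again, now with the $\gamma_t^{-2}$ factors shifting the resulting exponent from $\gamma_t^{2/(1-\nu)}$ to $\gamma_t^{2\nu/(1-\nu)}$, and the two Young-splitting choices producing the combined constant $(1-\nu)(2^{\nu/(1-\nu)}+1)(2^{1+\nu}L_\nu^2\nu^\nu/\alpha^{1+\nu})^{1/(1-\nu)}$.

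The main obstacle I anticipate is the bookkeeping of exactly which displacement ($\|x_t-y_{t+1}\|$, $\|x_{t+1}-y_{t+1}\|$, or $\|x_t-x_{t+1}\| = \gamma_t\|R_{\gamma_t}(x_t)\|$) each bad term should be charged against, and getting the Young's-inequality constants to line up so that (a) the residual reservoir keeps a clean positive coefficient $\tfrac{\alpha r}{2}(1-\tfrac1{w_5})$, (b) the leftover $B_\psi(x_{t+1},y_{t+1})$ and $B_\psi(y_{t+1},x_t)$ terms from Lemma~\ref{lem-gen-stoc} (which carry coefficient $1$ there after the $w_i$ are fixed) have enough room to absorb the $(1-r)$-fraction, and (c) the final constants match the stated $(r(w_5-1)\alpha+4)$ and $(1-r)^{-\nu}$ factors — the "$+4$" strongly suggests the $B_\psi(x_t,y_t)$ term is being bounded by the four-fold combination $\|x_t-y_{t+1}\|^2$ etc. via repeated triangle inequalities, and tracing that constant is the delicate part. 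Linking $B_\psi(x_t,y_t)$ at the \emph{current} index $t$ to residual quantities at index $t$ (rather than $t-1$) is the conceptual crux, and I expect it relies on the identity $y_{t+1}$ being the prox of $\gamma_t F(y_t)$ at $x_t$ combined with a one-step comparison; making this rigorous without a monotonicity or Lipschitz-in-$F$ assumption beyond Hölder is where I'd spend the most care.
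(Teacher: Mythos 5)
Your overall architecture matches the paper's proof: start from the deterministic specialization of Lemma~\ref{lem-gen-stoc}, inject a fraction $r$ of the negative term $B_\psi(y_{t+1},x_t)$ into a residual reservoir with Young-slack $1-1/w_5$, convert the remaining concave terms $q\,d^\nu$ into constants of order $\gamma_t^{2/(1-\nu)}$ via Lemma~\ref{lem_max_val}, telescope using $x_0=y_0$; for part (ii) replace Lemma~\ref{lem_max_val} by the step-size cap that makes both coefficients non-positive; for part (iii) divide by $\gamma_t^2$ and reuse the add-and-subtract telescoping from Theorem~\ref{thm-inv-avg} with Assumption~\ref{asum-set2}. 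All of that is exactly what the paper does.

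However, there is one concrete error at the step you yourself flag as the crux. You assert that $\|x_t-x_{t+1}\| = \gamma_t\|R_{\gamma_t}(x_t)\|$ ``by definition'' and later that $\|y_{t+1}-x_t\| = \gamma_t\|R_{\gamma_t}(x_t)\|$ ``exactly.'' Both are false: by~\eqref{residual} the residual involves $P_{x_t}(\gamma_t F(x_t))$, i.e.\ the prox step driven by $F$ evaluated at $x_t$ itself, whereas $y_{t+1}=P_{x_t}(\gamma_t F(y_t))$ and $x_{t+1}=P_{x_t}(\gamma_t F(y_{t+1}))$ are driven by $F(y_t)$ and $F(y_{t+1})$. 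The correct bridge is to write $\|x_t - y_{t+1}\|^2 = \|[x_t - P_{x_t}(\gamma_t F(x_t))] + [P_{x_t}(\gamma_t F(x_t)) - P_{x_t}(\gamma_t F(y_t))]\|^2$, expand the square, absorb the cross term by Young's inequality with $w_5>1$ (which is where the factor $1-1/w_5$ on the residual and the extra term $\tfrac{\alpha r(w_5-1)}{2}\|P_{x_t}(\gamma_t F(x_t))-P_{x_t}(\gamma_t F(y_t))\|^2$ come from), and then bound that extra term by $\tfrac{\gamma_t^2}{\alpha}\|F(x_t)-F(y_t)\|_*^2$ using the $\tfrac1\alpha$-Lipschitzness of $P_{x_t}(\cdot)$ in its argument (\cite[Lemma~2.1]{nemirovski2004prox}) followed by the H\"older condition. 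This is precisely the origin of the $r(w_5-1)\alpha$ piece of the constant $\left(r(w_5-1)\alpha+4\right)$; the $+4$ comes from the choice $w_2=\alpha/4$ in Lemma~\ref{lem-gen-stoc} (so $1/w_2=4/\alpha$), not from a fourfold triangle-inequality combination as you guessed. Without the add-and-subtract of $P_{x_t}(\gamma_t F(x_t))$ and the prox-nonexpansiveness bound, the residual $\|R_{\gamma_t}(x_t)\|^2$ never legitimately enters the inequality, so as written the argument does not close.
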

\begin{proof}
Following the analysis as in the proof of Lemma~\ref{lem-gen-stoc}, and using $\widehat F(z,\xi) = F(z)$ for all $z \in X$ (also implying $b_{t+1} = 0$), and $M_\nu = 0$, we obtain 
\begin{align}
        \gamma_t \la F(y_{t+1})&, y_{t+1} - z \ra \leq B_{\psi}(z,x_t) - B_{\psi}(z,x_{t+1}) \nonumber\\
       &- \left(1- \frac{w_2+w_3}{\alpha} \right) B_{\psi}(x_{t+1},y_{t+1}) +\frac{\gamma_t^2 L_{\nu}^2}{2 w_2} \left(\frac{2}{\alpha} B_\psi(x_t,y_t) \right)^\nu \nonumber\\
       &- B_{\psi}(y_{t+1},x_{t}) + \frac{\gamma_t^2 L_{\nu}^2}{2 w_3} \left(\frac{2}{\alpha} B_\psi(y_{t+1},x_t) \right)^\nu . \label{hol-special-case1}
    \end{align}
Adding $r B_\psi(y_{t+1},x_t)$ on both sides of relation~\eqref{hol-special-case1} with $0 \leq r < 1$ yields
    \begin{align}
        &\gamma_t \la F(y_{t+1}), y_{t+1} - z \ra + r B_\psi(y_{t+1},x_t) \leq B_{\psi}(z,x_t) - B_{\psi}(z,x_{t+1}) \nonumber\\
       & - \left(1- \frac{w_2+w_3}{\alpha} \right) B_{\psi}(x_{t+1},y_{t+1}) +\frac{\gamma_t^2 L_{\nu}^2}{2 w_2} \left(\frac{2}{\alpha} B_\psi(x_t,y_t) \right)^\nu \nonumber\\
       &- (1-r) B_{\psi}(y_{t+1},x_{t}) + \frac{\gamma_t^2 L_{\nu}^2}{2 w_3} \left(\frac{2}{\alpha} B_\psi(y_{t+1},x_t) \right)^\nu . \label{hol1}
    \end{align}
    The second quantity on the left-hand side of \eqref{hol1} can be lower estimated by using the $\alpha$-strong convexity of $B_\psi(y_{t+1},x_t)$ and the definition of $y_{t+1}$ (cf.~\eqref{det-pov1}):
    \begin{align}
        r B_\psi(y_{t+1},x_t) \geq \frac{\alpha r }{2} \|x_t - y_{t+1} \|^2 = \frac{\alpha r }{2} \|x_t - P_{x_t}(\gamma_t F(y_t)) \|^2, \nonumber
    \end{align}
    We add and subtract $P_{x_t}(\gamma_t F(x_t))$ inside the norm of the preceding relation and use the definition of residual function (cf. \eqref{residual}) to obtain
    \begin{align}
        r B_\psi(y_{t+1},x_t) &\geq \frac{\alpha r }{2} \|[x_t - P_{x_t}(\gamma_t F(x_t))] + [P_{x_t}(\gamma_t F(x_t)) - P_{x_t}(\gamma_t F(y_t))] \|^2 \nonumber\\
        & = \frac{\alpha r \gamma_t^2}{2} \|R_{\gamma_t}(x_t)\|^2 + \frac{\alpha r}{2} \|P_{x_t}(\gamma_t F(x_t)) - P_{x_t}(\gamma_t F(y_t))\|^2 \nonumber\\
        & \quad + \alpha r \la \gamma_t R_{\gamma_t}(x_t), P_{x_t}(\gamma_t F(x_t)) - P_{x_t}(\gamma_t F(y_t)) \ra . \label{hol2}
    \end{align}
    We lower estimate the last term on the right hand side of relation~\eqref{hol2} using Young's inequality, with a constant $w_5 > 1$, and obtain
    \begin{align}
        &\alpha r \la \gamma_t R_{\gamma_t}(x_t), P_{x_t}(\gamma_t F(x_t)) - P_{x_t}(\gamma_t F(y_t)) \ra \nonumber\\
        &\geq - \frac{\alpha r \gamma_t^2}{2 w_5} \|R_{\gamma_t}(x_t)\|^2 - \frac{\alpha r w_5}{2} \|P_{x_t}(\gamma_t F(x_t)) - P_{x_t}(\gamma_t F(y_t))\|^2 . \label{hol3}
    \end{align}
    Substituting relation~\eqref{hol3} back into relation~\eqref{hol2} and then combining the resulting relation with equation~\eqref{hol1} yields
    \begin{align}
        &\gamma_t \la F(y_{t+1}), y_{t+1} - z \ra + \frac{\alpha r \gamma_t^2}{2} \left(1-\frac{1}{w_5} \right) \|R_{\gamma_t}(x_t)\|^2 \leq B_{\psi}(z,x_t) - B_{\psi}(z,x_{t+1}) \nonumber\\
        &+ \frac{\alpha r (w_5 - 1)}{2} \|P_{x_t}(\gamma_t F(x_t)) \!-\! P_{x_t}(\gamma_t F(y_t))\|^2 - \left(1 \!-\! \frac{w_2+w_3}{\alpha} \right) B_{\psi}(x_{t+1},y_{t+1}) \nonumber\\
        &+\! \frac{\gamma_t^2 L_{\nu}^2}{2 w_2} \!\left(\frac{2}{\alpha} B_\psi(x_t,y_t) \! \right)^{\!\nu} \!- (1 \!-\! r) B_{\psi}(y_{t+1},x_{t}) \!+\! \frac{\gamma_t^2 L_{\nu}^2}{2 w_3} \!\left(\frac{2}{\alpha} B_\psi(y_{t+1},x_t) \! \right)^{\!\nu} . \label{hol4}
    \end{align}
    The third term on the right-hand side of relation~\eqref{hol4} can be upper estimated using the properties of the prox mapping from~\cite[Lemma 2.1]{nemirovski2004prox} which is
    \begin{align}
        \frac{\alpha r (w_5 - 1)}{2} &\|P_{x_t}(\gamma_t F(x_t)) - P_{x_t}(\gamma_t F(y_t))\|^2 \leq \frac{\alpha r (w_5 - 1) \gamma_t^2}{2 \alpha} \|F(x_t) - F(y_t)\|_*^2 \nonumber\\
        &\leq \frac{r (w_5 - 1) \gamma_t^2 L_\nu^2}{2} \|x_t - y_t\|^{2\nu} \leq \frac{ r (w_5 - 1) \gamma_t^2 L_\nu^2}{2} \left(\frac{2}{\alpha} B_\psi(x_t,y_t)\right)^{\nu} . \nonumber
    \end{align}
    Substituting the preceding relation into equation~\eqref{hol4} yields
    \begin{align}
        &\gamma_t \la F(y_{t+1}), y_{t+1} - z \ra + \frac{\alpha r \gamma_t^2}{2} \left(1-\frac{1}{w_5} \right) \|R_{\gamma_t}(x_t)\|^2 \leq B_{\psi}(z,x_t) - B_{\psi}(z,x_{t+1}) \nonumber\\
        &+\frac{\gamma_t^2 L_{\nu}^2 \left(r(w_5-1) + \frac{1}{w_2} \right)}{2^{1-\nu} \alpha^\nu} \left(B_\psi(x_t,y_t) \right)^\nu - \left(1- \frac{w_2+w_3}{\alpha} \right) B_{\psi}(x_{t+1},y_{t+1}) \nonumber\\
        & + \frac{\gamma_t^2 L_{\nu}^2}{2^{1-\nu} \alpha^\nu w_3} \left(B_\psi(y_{t+1},x_t) \right)^\nu  - (1-r) B_{\psi}(y_{t+1},x_{t}) . \nonumber
    \end{align}
    We select the constants $w_2 = w_3 = \frac{\alpha}{4}$ in the preceding relation, yielding
    \begin{align}
        &\gamma_t \la F(y_{t+1}), y_{t+1} - z \ra + \frac{\alpha r \gamma_t^2}{2} \left(1-\frac{1}{w_5} \right) \|R_{\gamma_t}(x_t)\|^2 \leq B_{\psi}(z,x_t) - B_{\psi}(z,x_{t+1}) \nonumber\\
        &+\frac{\gamma_t^2 L_{\nu}^2 \left(r(w_5-1) + \frac{4}{\alpha} \right)}{2^{1-\nu} \alpha^\nu} \left(B_\psi(x_t,y_t) \right)^\nu - \frac{1}{2} B_{\psi}(x_{t+1},y_{t+1}) \nonumber\\
        & + \frac{2^{1+\nu} \gamma_t^2 L_{\nu}^2}{\alpha^{1+\nu}} \left(B_\psi(y_{t+1},x_t) \right)^\nu  - (1-r) B_{\psi}(y_{t+1},x_{t}) . \nonumber
    \end{align}
    Summing the preceding relation from $t = 0,1, \ldots, T$ for any $T \geq 0$ yields 
    \begin{align}
        &\sum_{t=0}^T \gamma_t \la F(y_{t+1}), y_{t+1} - z \ra + \frac{\alpha r}{2} \left(1-\frac{1}{w_5} \right) \sum_{t=0}^T \gamma_t^2 \|R_{\gamma_t}(x_t)\|^2 \leq B_{\psi}(z,x_0) \nonumber\\
        &- B_{\psi}(z,x_{T+1}) + \frac{\gamma_0^2 L_{\nu}^2 \left(r(w_5-1) + \frac{4}{\alpha} \right)}{2^{1-\nu} \alpha^\nu} \left(B_\psi(x_0,y_0) \right)^\nu - \frac{1}{2} B_{\psi}(x_{T+1},y_{T+1}) \nonumber\\
        &+\sum_{t=0}^{T-1} \left[\frac{\gamma_{t+1}^2 L_{\nu}^2 \left(r(w_5-1) + \frac{4}{\alpha} \right)}{2^{1-\nu} \alpha^\nu} \left(B_\psi(x_{t+1},y_{t+1}) \right)^\nu - \frac{1}{2} B_{\psi}(x_{t+1},y_{t+1}) \right] \nonumber\\
        & + \sum_{t=0}^T \left[ \frac{2^{1+\nu} \gamma_t^2 L_{\nu}^2}{\alpha^{1+\nu}} \left(B_\psi(y_{t+1},x_t) \right)^\nu  - (1-r) B_{\psi}(y_{t+1},x_{t}) \right] . \nonumber
    \end{align}
    With an initialization $x_0 = y_0$, the third quantity on the right-hand side of the preceding relation can be dropped. In addition, we drop the second and the fourth quantities on the right-hand side. Thus, we obtain
    \begin{align}
        &\sum_{t=0}^T \gamma_t \la F(y_{t+1}), y_{t+1} - z \ra + \frac{\alpha r}{2} \left(1-\frac{1}{w_5} \right) \sum_{t=0}^T \gamma_t^2 \|R_{\gamma_t}(x_t)\|^2 \leq B_{\psi}(z,x_0) \nonumber\\
        &+\sum_{t=0}^{T-1} \left[ \underbrace{\frac{\gamma_{t+1}^2 L_{\nu}^2 \left(r(w_5-1) + \frac{4}{\alpha} \right)}{2^{1-\nu} \alpha^\nu} \left(B_\psi(x_{t+1},y_{t+1}) \right)^\nu - \frac{1}{2} B_{\psi}(x_{t+1},y_{t+1}) }_{D_{t,1}(B_\psi(x_{t+1},y_{t+1}))} \right] \nonumber\\
        & + \sum_{t=0}^T \left[ \underbrace{\frac{2^{1+\nu} \gamma_t^2 L_{\nu}^2}{\alpha^{1+\nu}} \left(B_\psi(y_{t+1},x_t) \right)^\nu  - (1-r) B_{\psi}(y_{t+1},x_{t}) }_{D_{t,2}(B_\psi(y_{t+1},x_{t}))} \right] . \label{hol6}
    \end{align}

    \noindent\textit{Case (i):} Now consider $\nu \in (0,1)$. Using Lemma~\ref{lem_max_val}, we obtain global upper estimates of both \( D_{t,1}(B_\psi(x_{t+1}, y_{t+1})) \) and \( D_{t,2}(B_\psi(y_{t+1}, x_t)) \). For the quantity \( D_{t,1}(B_\psi(x_{t+1}, y_{t+1})) \), we equate $d = B_\psi(x_{t+1}, y_{t+1})$, $q = \frac{\gamma_{t+1}^2 L_{\nu}^2 \left(r(w_5-1) + \frac{4}{\alpha} \right)}{2^{1-\nu} \alpha^\nu}$, and $s = \frac{1}{2}$ in Lemma~\ref{lem_max_val}. Similarly for the quantity \( D_{t,2}(B_\psi(y_{t+1}, x_t)) \), we equate $d=B_\psi(y_{t+1}, x_t)$, $q= \frac{2^{1+\nu} \gamma_t^2 L_{\nu}^2}{\alpha^{1+\nu}}$, and $s = (1-r)$. Hence, for $0 \leq t \leq T$,
    \begin{align*}
        &D_{t,1}(B_\psi(x_{t+1},y_{t+1})) \leq (1-\nu) \left( \frac{2^{2\nu - 1} \gamma_{t+1}^2 L_{\nu}^2 \left(r(w_5-1) \alpha + 4 \right) \nu^\nu }{\alpha^{1+\nu}} \right)^{\frac{1}{1-\nu}}, \\
        &D_{t,2}(B_\psi(y_{t+1},x_{t})) \leq (1-\nu) \left( \frac{2^{1+\nu} \gamma_t^2 L_{\nu}^2 \nu^\nu}{\alpha^{1+\nu} (1-r)^\nu}   \right)^{\frac{1}{1-\nu}} .
    \end{align*}
    Substituting the preceding two relations into equation~\eqref{hol6} and applying $\sum_{t=0}^{T-1} \gamma_{t+1}^{\frac{2}{1-\nu}} \leq \sum_{t=0}^{T} \gamma_{t}^{\frac{2}{1-\nu}}$ yields result (i) of the lemma.

    \noindent\textit{Case (ii):} Next, we consider $\nu = 1$ which modifies relation~\eqref{hol6} as
    \begin{align}
        &\sum_{t=0}^T \gamma_t \la F(y_{t+1}), y_{t+1} - z \ra + \frac{\alpha r}{2} \left(1-\frac{1}{w_5} \right) \sum_{t=0}^T \gamma_t^2 \|R_{\gamma_t}(x_t)\|^2 \leq B_{\psi}(z,x_0) \nonumber\\
        &- \sum_{t=0}^{T-1} \left(\frac{1}{2} - \frac{\gamma_{t+1}^2 L_{1}^2 \left(r(w_5-1) \alpha + 4 \right)}{\alpha^2} \right) B_{\psi}(x_{t+1},y_{t+1}) \nonumber\\
        &- \sum_{t=0}^T \left((1-r) - \frac{4 \gamma_t^2 L_{1}^2}{\alpha^{2}} \right) B_{\psi}(y_{t+1},x_{t}) . \label{hol7}
    \end{align}
    To make the last two terms of the preceding relation non-positive, we select
    \begin{align}
        \gamma_t \leq \min \left( \frac{\alpha}{\sqrt{2} L_1 (\sqrt{r(w_5-1) \alpha + 4})} , \frac{\alpha \sqrt{1-r}}{2 L_1} \right) , \label{step-up-bd-lipschitz}
    \end{align}
    with $0 \leq r <1$ and $w_5 > 1$. 
    With the step size rule as in~\eqref{step-up-bd-lipschitz}, the second and the third quantities on the right-hand side of the relation~\eqref{hol7} can be dropped and we obtain the result (ii) of the lemma.

    \noindent\textit{Case (iii):} We start from relation~\eqref{hol-special-case1} and select $w_2 = w_3 = \frac{\alpha}{4}$ and then divide by $\gamma_t^2$ on both sides and obtain
    \begin{align}
        &\frac{1}{\gamma_t} \la F(y_{t+1}), y_{t+1} - z \ra \leq \frac{B_{\psi}(z,x_t)}{\gamma_t^2} - \frac{B_{\psi}(z,x_{t+1})}{\gamma_t^2} - \frac{1}{2 \gamma_t^2} B_{\psi}(x_{t+1},y_{t+1}) \nonumber\\
       &+\frac{2^{1+\nu} L_{\nu}^2}{\alpha^{1+\nu}} \left(B_\psi(x_t,y_t) \right)^\nu - \frac{B_{\psi}(y_{t+1},x_{t})}{\gamma_t^2} + \frac{2^{1+\nu} L_{\nu}^2}{\alpha^{1+\nu}} \left(B_\psi(y_{t+1},x_t) \right)^\nu . \label{inv1}
    \end{align}
    Then, we follow the lines of proof similar to that of Theorem~\ref{thm-inv-avg} by adding and subtracting $\frac{1}{\gamma_{t-1}^2} B_{\psi}(z,x_t)$ on the right-hand side of preceding relation and use non-increasing step sizes, i.e., $\gamma_t \leq \gamma_{t-1}$ and Assumption~\ref{asum-set2} to obtain
    \begin{align}
        \frac{1}{\gamma_t} \la F(y_{t+1}), y_{t+1} - z \ra &\leq \frac{B_{\psi}(z,x_t)}{\gamma_{t-1}^2} - \frac{B_{\psi}(z,x_{t+1})}{\gamma_t^2} + \left( \frac{1}{\gamma_t^2} - \frac{1}{\gamma_{t-1}^2} \right) D \nonumber\\
       & + \frac{2^{1+\nu} L_{\nu}^2}{\alpha^{1+\nu}} \left(B_\psi(x_t,y_t) \right)^\nu - \frac{B_{\psi}(x_{t+1},y_{t+1})}{2 \gamma_t^2} \nonumber\\
       & + \frac{2^{1+\nu} L_{\nu}^2}{\alpha^{1+\nu}} \left(B_\psi(y_{t+1},x_t) \right)^\nu - \frac{B_{\psi}(y_{t+1},x_{t})}{\gamma_t^2} . \nonumber
    \end{align}
    Summing the preceding relation from $t=1, \ldots, T$ for any $T \geq 1$ yields
    \begin{align}
        \sum_{t=1}^T \frac{1}{\gamma_t} \la F(y_{t+1})&, y_{t+1} - z \ra \leq \frac{1}{\gamma_0^2} B_{\psi}(z,x_1) - \frac{1}{\gamma_T^2} B_{\psi}(z,x_{T+1}) + \left( \frac{1}{\gamma_T^2} - \frac{1}{\gamma_{0}^2} \right) D \nonumber\\
       & + \sum_{t=1}^T \left[\frac{2^{1+\nu} L_{\nu}^2}{\alpha^{1+\nu}} \left(B_\psi(x_t,y_t) \right)^\nu - \frac{B_{\psi}(x_{t+1},y_{t+1})}{2 \gamma_t^2} \right] \nonumber\\
       &+ \sum_{t=1}^T \left[\frac{2^{1+\nu} L_{\nu}^2}{\alpha^{1+\nu}} \left(B_\psi(y_{t+1},x_t) \right)^\nu - \frac{B_{\psi}(y_{t+1},x_{t})}{\gamma_t^2} \right] . \label{inv2}
    \end{align}
    Moreover, using $t=0$ in relation~\eqref{inv1} yields
    \begin{align}
        &\frac{1}{\gamma_0} \la F(y_{1}), y_{1} - z \ra \leq \frac{B_{\psi}(z,x_0)}{\gamma_0^2} - \frac{B_{\psi}(z,x_{1})}{\gamma_0^2} - \frac{1}{2 \gamma_0^2} B_{\psi}(x_{1},y_{1}) \nonumber\\
       &+\frac{2^{1+\nu} L_{\nu}^2}{\alpha^{1+\nu}} \left(B_\psi(x_0,y_0) \right)^\nu - \frac{B_{\psi}(y_{1},x_{0})}{\gamma_0^2} + \frac{2^{1+\nu} L_{\nu}^2}{\alpha^{1+\nu}} \left(B_\psi(y_{1},x_0) \right)^\nu.  \label{inv3}
    \end{align}
    Adding relations~\eqref{inv2} and \eqref{inv3}, we obtain
    \begin{align}
        &\sum_{t=0}^T \frac{1}{\gamma_t} \la F(y_{t+1}), y_{t+1} - z \ra \leq \frac{B_{\psi}(z,x_0)}{\gamma_0^2} - \frac{B_{\psi}(z,x_{T+1})}{\gamma_T^2} + \left( \frac{1}{\gamma_T^2} - \frac{1}{\gamma_{0}^2} \right) D \nonumber\\
        & + \sum_{t=0}^{T-1} \left[\frac{2^{1+\nu} L_{\nu}^2}{\alpha^{1+\nu}} \left(B_\psi(x_{t+1},y_{t+1}) \right)^\nu - \frac{B_{\psi}(x_{t+1},y_{t+1})}{2 \gamma_t^2} \right] - \frac{B_{\psi}(x_{T+1},y_{T+1})}{2 \gamma_T^2} \nonumber\\
       & +\frac{2^{1+\nu} L_{\nu}^2 \left(B_\psi(x_0,y_0) \right)^\nu}{\alpha^{1+\nu}}  + \sum_{t=0}^T \left[\frac{2^{1+\nu} L_{\nu}^2}{\alpha^{1+\nu}} \left(B_\psi(y_{t+1},x_t) \right)^\nu - \frac{B_{\psi}(y_{t+1},x_{t})}{\gamma_t^2} \right] . \nonumber
    \end{align}
    The second and fifth terms on the right-hand side of the preceding relation can be dropped. Since we initialize with \( x_0 = y_0 \), the sixth term also vanishes. Applying Assumption~\ref{asum-set2} to the first term and upper estimating the sum of the fourth term from \( t = 0 \) to \( T \) on the right hand side, we obtain
    \begin{align}
        &\sum_{t=0}^T \frac{1}{\gamma_t} \la F(y_{t+1}), y_{t+1} - z \ra \nonumber\\
        &\leq \frac{D}{\gamma_T^2} + \sum_{t=0}^{T} \left[\frac{2^{1+\nu} L_{\nu}^2}{\alpha^{1+\nu}} \left(B_\psi(x_{t+1},y_{t+1}) \right)^\nu - \frac{B_{\psi}(x_{t+1},y_{t+1})}{2 \gamma_t^2} \right] \nonumber\\
       &\quad + \sum_{t=0}^T \left[\frac{2^{1+\nu} L_{\nu}^2}{\alpha^{1+\nu}} \left(B_\psi(y_{t+1},x_t) \right)^\nu - \frac{B_{\psi}(y_{t+1},x_{t})}{\gamma_t^2} \right] . \nonumber
    \end{align}
    We upper estimate the last two terms using Lemma~\ref{lem_max_val}, where for the second term on the right hand side, we equate $d = B_\psi(x_{t+1},y_{t+1})$, $q = \frac{2^{1+\nu} L_{\nu}^2}{\alpha^{1+\nu}}$, and $s = \frac{1}{2 \gamma_t^2}$, whereas for the last term, we equate $d = B_\psi(y_{t+1},x_t)$, $q = \frac{2^{1+\nu} L_{\nu}^2}{\alpha^{1+\nu}}$, and $s = \frac{1}{\gamma_t^2}$, yielding the result (iii) of the lemma.
 \hfill$\square$
\end{proof}
Lemma~\ref{lem-det-special} addresses the special case where the mapping is H\"older continuous. Thus far, we have not invoked the monotonicity of $F$ (Assumption~\ref{asum-monotone}). Moreover, parts (i) and (ii) of Lemma~\ref{lem-det-special} do not rely on the boundedness of the set $X$ (Assumption~\ref{asum-set2}). The result in~\cite[Theorem 1]{chakraborty2024popov} establishes an optimal convergence rate of $O(1/T)$ in terms of the dual gap function using a constant step size $\gamma_t = \gamma = \frac{\alpha}{2L}$ under Lipschitz continuity, i.e., $\nu = 1$. We therefore focus next on the convergence analysis for the dual gap function when $\nu \in (0,1)$.
\begin{theorem}\label{thm-det-special-case1}
    Under Assumptions~\ref{asum-set1}--\ref{asum-set2}, with $M_\nu = 0$, $\nu \in (0,1)$, and constants $\widehat C = (1-\nu) \left[ \left( \frac{2^{1+2\nu} L_{\nu}^2 \nu^\nu }{\alpha^{1+\nu}} \right)^{\frac{1}{1-\nu}} + \left( \frac{2^{1+\nu} L_{\nu}^2 \nu^\nu}{\alpha^{1+\nu}}   \right)^{\frac{1}{1-\nu}} \right]$, $c>0$, and $0<a<1$, for the iterates of the deterministic Popov mirror-prox, we have:\\
    i) With a constant step size $\gamma_t = \frac{c}{(T+1)^a}$ and known number $T\geq 0$ of iterations, the following relation holds
    \begin{align}
        & G \left(\frac{1}{T+1} \sum_{t=0}^T y_{t+1} \right) \leq \frac{D}{c (T+1)^{1-a}} + \frac{\widehat C c^{\frac{1+\nu}{1-\nu}}}{(T+1)^{\frac{a(1+\nu)}{1-\nu}}} . \nonumber
    \end{align}
    ii) If the number $T$ of iterations is not known in advance, then with the  step size $\gamma_t = \frac{c}{(t+1)^a}$, $c>0$, the following relation holds for any $T \geq 0$,
   \begin{align}
        G \left(\frac{\sum_{t=0}^T \gamma_t y_{t+1}}{\sum_{t=0}^T \gamma_t} \right) \leq \begin{cases}
             \frac{D}{c(T+1)^{1-a}} + \frac{\widehat C (1-\nu) c^{\frac{1+\nu}{1-\nu}} }{(1-\nu-2a) (T+1)^{\frac{a(1+\nu)}{1-\nu}}} \quad &\text{for $0<a<\frac{1-\nu}{2}$,}\\
            \frac{D}{c(T+1)^{1-a}} + \frac{\widehat C c^{\frac{1+\nu}{1-\nu}} (1+\ln (T+1)) }{(T+1)^{1-a}}  \quad &\text{for $a=\frac{1-\nu}{2}$,}\\
            \frac{D}{c(T+1)^{1-a}} + \frac{ 2a \widehat C c^{\frac{1+\nu}{1-\nu}}}{(2a+\nu-1) (T+1)^{1-a}}  \quad &\text{for $\frac{1-\nu}{2}<a<1$} .
        \end{cases} \nonumber
    \end{align}
    iii) With $\gamma_t = \frac{c}{(t+1)^a}$, $c>0$, the averaging of iterates with the inverse of the step sizes as weights, and an unknown number $T\ge 1$ of iterations, there holds
    \begin{align}
        &G \left(\frac{\sum_{t=0}^T \frac{y_{t+1}}{\gamma_t}}{\sum_{t=0}^T \frac{1}{\gamma_t}} \right) \leq \frac{4^a (1+a) D}{c T^{1-a}} \nonumber\\
        &+ \begin{cases}
            \frac{2^{\frac{1-\nu-2\nu a}{1-\nu}}  {c^{\frac{1+\nu}{1-\nu}} (1+a)} (1-\nu)^2 \left(2^{\frac{\nu}{1-\nu}} + 1 \right) \left(\frac{2^{1+\nu} L_\nu^2 \nu^\nu}{\alpha^{1+\nu}} \right)^{\frac{1}{1-\nu}} }{(1-\nu-2\nu a) T^{\frac{a(1+\nu)}{1-\nu}}}  \quad &\text{for } 0<a<\frac{1-\nu}{2 \nu},\\
            \frac{{c^{\frac{1+\nu}{1-\nu}}(1+a)} (1-\nu) \left(2^{\frac{\nu}{1-\nu}} + 1 \right) \left(\frac{2^{1+\nu} L_\nu^2 \nu^\nu}{\alpha^{1+\nu}} \right)^{\frac{1}{1-\nu}} (1+\ln(T+1)) }{T^{1+a}} \quad &\text{for }  a=\frac{1-\nu}{2 \nu},\\
            \frac{ 2a\nu {c^{\frac{1+\nu}{1-\nu}} (1+a)} (1-\nu) \left(2^{\frac{\nu}{1-\nu}} + 1 \right) \left(\frac{2^{1+\nu} L_\nu^2 \nu^\nu}{\alpha^{1+\nu}} \right)^{\frac{1}{1-\nu}} }{(2\nu a+\nu-1) T^{1+a}} \quad &\text{for } \frac{1-\nu}{2 \nu}<a<1 .
        \end{cases} \nonumber
    \end{align}
\end{theorem}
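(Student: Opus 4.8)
The plan is to convert the summed estimates of Lemma~\ref{lem-det-special} into bounds on the dual gap function $G$ by combining monotonicity with the linearity of $z'\mapsto\langle F(z),z'\rangle$, and then to substitute the explicit step sizes using Lemma~\ref{lem-step-bd2}.

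\textbf{Parts (i) and (ii).} First I would invoke Lemma~\ref{lem-det-special}(i) with the free parameter $r=0$. Then the nonnegative residual term on its left-hand side disappears, and the bracketed constant there collapses to exactly $\widehat C$ (using $2^{2\nu-1}\cdot4=2^{1+2\nu}$ and $(1-r)^\nu=1$). This gives, for every $z\in X$ and $T\ge0$,
\[
\sum_{t=0}^T \gamma_t\,\langle F(y_{t+1}),\,y_{t+1}-z\rangle \;\le\; B_\psi(z,x_0) + \widehat C\sum_{t=0}^T \gamma_t^{\frac{2}{1-\nu}}.
\]
By monotonicity of $F$ (Assumption~\ref{asum-monotone}) we have $\langle F(z),y_{t+1}-z\rangle\le\langle F(y_{t+1}),y_{t+1}-z\rangle$, and by linearity $\sum_t\gamma_t\langle F(z),y_{t+1}-z\rangle=\bigl(\sum_t\gamma_t\bigr)\langle F(z),\widehat y-z\rangle$ where $\widehat y$ is the $\gamma_t$-weighted average of $\{y_{t+1}\}$. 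Dividing by $\sum_t\gamma_t$, taking the supremum over $z\in X$, and using $\max_{z\in X}B_\psi(z,x_0)\le D$ (Assumption~\ref{asum-set2}) gives
\[
G(\widehat y)\;\le\;\frac{D+\widehat C\sum_{t=0}^T \gamma_t^{\frac{2}{1-\nu}}}{\sum_{t=0}^T \gamma_t}.
\]
For (i), with $\gamma_t\equiv\gamma=c/(T+1)^a$, the denominator is $c(T+1)^{1-a}$ and the numerator sum equals $c^{2/(1-\nu)}(T+1)^{1-2a/(1-\nu)}$; simplifying the exponent $a-2a/(1-\nu)=-a(1+\nu)/(1-\nu)$ yields the two stated terms. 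For (ii), I would lower-bound the denominator by Lemma~\ref{lem-step-bd2}(i), $\sum_t\gamma_t\ge c(T+1)^{1-a}$, and upper-bound $\sum_t\gamma_t^{2/(1-\nu)}$ by Lemma~\ref{lem-step-bd2}(ii) with $p=\nu$; its three regimes give the three cases of the claim. In the regime $0<a<\tfrac{1-\nu}{2}$ one checks the exponent identity $\tfrac{1-\nu-2a}{1-\nu}-(1-a)=-\tfrac{a(1+\nu)}{1-\nu}$ and bounds $\bigl(1+\tfrac1{T+1}\bigr)^{(1-\nu-2a)/(1-\nu)}\le 2^{(1-\nu-2a)/(1-\nu)}$ for $T\ge0$, which produces the factor $2^{(1-\nu-2a)/(1-\nu)}$.

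\textbf{Part (iii).} Here I would start instead from Lemma~\ref{lem-det-special}(iii), which already uses Assumption~\ref{asum-set2} and $\gamma_t\le\gamma_{t-1}$: with $C':=(1-\nu)\bigl(2^{\nu/(1-\nu)}+1\bigr)\bigl(2^{1+\nu}L_\nu^2\nu^\nu/\alpha^{1+\nu}\bigr)^{1/(1-\nu)}$,
\[
\sum_{t=0}^T \frac{1}{\gamma_t}\,\langle F(y_{t+1}),\,y_{t+1}-z\rangle \;\le\; \frac{D}{\gamma_T^2}+C'\sum_{t=0}^T \gamma_t^{\frac{2\nu}{1-\nu}}.
\]
Applying monotonicity and linearity exactly as above (now with weights $\gamma_t^{-1}$) and taking $\sup_{z\in X}$ gives
\[
G\!\left(\frac{\sum_t \gamma_t^{-1}y_{t+1}}{\sum_t \gamma_t^{-1}}\right)\;\le\;\frac{D\gamma_T^{-2}+C'\sum_{t=0}^T \gamma_t^{2\nu/(1-\nu)}}{\sum_{t=0}^T \gamma_t^{-1}}.
\]
I then substitute $\gamma_T^{-2}=(T+1)^{2a}/c^2$, lower-bound the denominator by Lemma~\ref{lem-step-bd2}(iv), $\sum_t\gamma_t^{-1}\ge T^{1+a}/(c(1+a))$, and upper-bound $\sum_t\gamma_t^{2\nu/(1-\nu)}$ by Lemma~\ref{lem-step-bd2}(iii) with $p=\nu$ (three regimes split at $a=\tfrac{1-\nu}{2\nu}$). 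For the $D$-term I use $(T+1)^{2a}\le(2T)^{2a}=4^aT^{2a}$ for $T\ge1$ to pull out $4^a$ and land $T^{1-a}$ in the denominator; for the first regime of the $C'$-term I use $T+2\le3T$ for $T\ge1$ to extract $3^{(1-\nu-2\nu a)/(1-\nu)}$ and verify $\tfrac{1-\nu-2\nu a}{1-\nu}-(1+a)=-\tfrac{a(1+\nu)}{1-\nu}$, which matches the denominator in the claim; in the remaining regimes $\sum_t\gamma_t^{2\nu/(1-\nu)}$ is $O(1)$ or $O(\ln T)$, so the denominator stays $T^{1+a}$. Collecting constants via $c\cdot c^{2\nu/(1-\nu)}=c^{(1+\nu)/(1-\nu)}$ and $C'(1-\nu)=(1-\nu)^2\bigl(2^{\nu/(1-\nu)}+1\bigr)\bigl(2^{1+\nu}L_\nu^2\nu^\nu/\alpha^{1+\nu}\bigr)^{1/(1-\nu)}$ puts everything in the stated form.

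\textbf{Main obstacle.} There is no conceptual obstacle; the effort is entirely in the exponent bookkeeping and in the elementary $T$-dependent estimates $\bigl(1+\tfrac1{T+1}\bigr)^\theta\le2^\theta$, $(T+1)^2\le4T^2$ and $T+2\le3T$ (the latter two for $T\ge1$). The one point needing care is matching the constants of Lemma~\ref{lem-det-special}: choosing $r=0$ in part~(i) — legitimate because the residual term is irrelevant for a dual-gap bound — is precisely what makes that lemma's bracketed constant coincide with the $\widehat C$ fixed in the theorem statement, and the constant $C'$ from part~(iii) already has the required form.
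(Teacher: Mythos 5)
Your proposal is correct and follows essentially the same route as the paper: setting $r=0$ in Lemma~\ref{lem-det-special}(i) so that its bracketed constant collapses to $\widehat C$, then using monotonicity, linearity of the weighted average, the supremum over $z\in X$, and Lemma~\ref{lem-step-bd2} with $p=\nu$ for the step-size sums, and starting from Lemma~\ref{lem-det-special}(iii) for part (iii). The elementary estimates $(T+1)^{2a}\le 4^aT^{2a}$ and $(T+2)^\theta\le 3^\theta T^\theta$ you use are the same ones the paper writes as $(1+\tfrac1T)^{2a}\le 2^{2a}$ and $(1+\tfrac2T)^\theta\le 3^\theta$, so the constants come out identically.
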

\begin{proof}
    Substitute $r = 0$ in the relation of Lemma~\ref{lem-det-special}(i) and then apply the monotonicity of the mapping $F$ (cf. Assumption~\ref{asum-monotone}) to obtain
    \begin{align*}
        \sum_{t=0}^T \gamma_t \la F(z), y_{t+1} - z \ra \leq B_{\psi}(z,x_0) + \widehat C \sum_{t=0}^{T} \gamma_t^{\frac{2}{1-\nu}} ,
    \end{align*}
    where the constant $\widehat C$ is defined in the theorem. The quantity on the left-hand side of the preceding relation can be written as
\begin{align}\label{avg_style}
        \sum_{t=0}^T \gamma_t \la F(z), y_{t+1} - z \ra = \left(\sum_{t=0}^T \gamma_t \right) \left\la F(z), \frac{\sum_{t=0}^T \gamma_t y_{t+1}}{\sum_{t=0}^T \gamma_t} - z \right\ra . 
    \end{align}
    Hence, by combining the preceding two relations, taking the maximum over $z \in X$, and using the compactness of the set $X$ (Assumption~\ref{asum-set2}), we arrive at 
    \begin{align}
        & G \left(\frac{\sum_{t=0}^T \gamma_t y_{t+1}}{\sum_{t=0}^T \gamma_t} \right) \leq \frac{D}{\sum_{t=0}^T \gamma_t} + \frac{\widehat C \sum_{t=0}^{T} \gamma_t^{\frac{2}{1-\nu}}}{\sum_{t=0}^T \gamma_t} . \label{hol10}
    \end{align}
    \noindent
    \textit{Case (i):} We choose a constant step size $\gamma_t = \frac{c}{(T+1)^a}$, with $a<1$ and $c>0$, in the relation~\eqref{hol10} to obtain the desired relation of the theorem.\\
\noindent
    {\it Case (ii):} To obtain this relation, we use $\gamma_t = \frac{c}{(t+1)^a}$ in relation~\eqref{hol10}, with $c>0$ and $a<1$, and then use the results of Lemma~\ref{lem-step-bd2}(i) and (ii) with $p=\nu$.\\
\noindent
    \textit{Case (iii):} We start from Lemma~\ref{lem-det-special} (iii), where we lower estimate the term on its left-hand side using the monotonicity of mapping $F$ (Assumption~\ref{asum-monotone}), and then average the iterates with $\gamma_t^{-1}$ similar to equation~\eqref{avg_style}. Then, we take the maximum over $z \in X$ to obtain the following relation for the dual gap function
    \begin{align}
        G \left(\frac{\sum_{t=0}^T \frac{y_{t+1}}{\gamma_t}}{\sum_{t=0}^T \frac{1}{\gamma_t}} \right) \leq \frac{\frac{D}{\gamma_T^2} + (1-\nu) \left(2^{\frac{\nu}{1-\nu}} + 1 \right) \left(\frac{2^{1+\nu} L_\nu^2 \nu^\nu}{\alpha^{1+\nu}} \right)^{\frac{1}{1-\nu}} \sum_{t=0}^{T} \gamma_t^{\frac{2\nu}{1-\nu}} }{\sum_{t=0}^T \frac{1}{\gamma_t}} . \nonumber
    \end{align}
    With the step size $\gamma_t = \frac{c}{(t+1)^a}$, with $c>0$ and $a<1$, and using Lemma~\ref{lem-step-bd2}(iii) and (iv), with $p=\nu$, from  the preceding relation, we obtain
    \begin{align}
        &G \left(\frac{\sum_{t=0}^T \frac{y_{t+1}}{\gamma_t}}{\sum_{t=0}^T \frac{1}{\gamma_t}} \right) \leq \frac{(1+a) D (1+\frac{1}{T})^{2a}}{c T^{1-a}} \nonumber\\
        &+ \begin{cases}
            \frac{{c^{\frac{1+\nu}{1-\nu}} (1+a)} (1-\nu)^2 \left(2^{\frac{\nu}{1-\nu}} + 1 \right) \left(\frac{2^{1+\nu} L_\nu^2 \nu^\nu}{\alpha^{1+\nu}} \right)^{\frac{1}{1-\nu}} \left(1+\frac{1}{T} \right)^{\frac{1-\nu-2\nu a}{1-\nu}} }{(1-\nu-2\nu a) T^{\frac{a(1+\nu)}{1-\nu}}} \quad &\text{for }  0<a<\frac{1-\nu}{2 \nu},\\
            \frac{{c^{\frac{1+\nu}{1-\nu}}(1+a)} (1-\nu) \left(2^{\frac{\nu}{1-\nu}} + 1 \right) \left(\frac{2^{1+\nu} L_\nu^2 \nu^\nu}{\alpha^{1+\nu}} \right)^{\frac{1}{1-\nu}} (1+\ln(T+1)) }{T^{1+a}} \quad &\text{for }  a=\frac{1-\nu}{2 \nu},\\
            \frac{ 2a\nu {c^{\frac{1+\nu}{1-\nu}} (1+a)} (1-\nu) \left(2^{\frac{\nu}{1-\nu}} + 1 \right) \left(\frac{2^{1+\nu} L_\nu^2 \nu^\nu}{\alpha^{1+\nu}} \right)^{\frac{1}{1-\nu}} }{(2\nu a+\nu-1) T^{1+a}} \quad &\text{for }  \frac{1-\nu}{2 \nu}<a<1 .
        \end{cases} \nonumber
    \end{align}
    Since, $T \geq 1$, we obtain $1+\frac{1}{T} \leq 2$. Using these in the preceding equation gives the stated relation. \hfill$\square$
\end{proof}
Theorem~\ref{thm-det-special-case1}(i) shows that for a monotone and continuous mapping with $\nu \in (0,1)$, a constant step size $\gamma_t = \gamma = {c}/{(T+1)^{\frac{1-\nu}{2}}}$ yields a convergence rate of $O(1/T^{\frac{1+\nu}{2}})$ in terms of the dual gap function. Moreover, if $\widehat C$ is sufficiently small, the convergence can be accelerated by choosing a small $a \ll 1$, and vice versa. Notably, selecting a function $\psi(\cdot)$ with a larger strong convexity constant $\alpha$ can help reduce $\widehat C$. Similar observations apply to Theorem~\ref{thm-det-special-case1}(ii) and (iii). In Theorem~\ref{thm-det-special-case1}(ii), setting $a = \frac{1 - \nu}{2}$ introduces a $\log(T+1)$ dependency in the rate. In contrast, part (iii) avoids this by using an averaging scheme with $\gamma_t^{-1}$ as weights and $a = \frac{1 - \nu}{2} < \frac{1 - \nu}{2\nu}$, still achieving the $O(1/T^{\frac{1+\nu}{2}})$ rate. However, in all cases, the step size remains dependent on the H\"older exponent $\nu$.

In Theorem~\ref{thm-det-special-case1}, the monotonicity of the mapping $F$ and the boundedness of the set $X$ are used. We will now establish the convergence rate of our method without requiring these assumptions, provided a Minty solution exists (cf.~\eqref{minty-vi}).
%
%

\begin{theorem}\label{thm-residual}
    Let Assumption~\ref{asum-set1} and Assumption~\ref{asum-holder}, for $\nu \in (0,1]$ and $M_\nu = 0$, hold. Let a Minty solution $x^*$ of VI$(X,F)$ exists. For the deterministic Popov mirror-prox algorithm we have,  with constants $0< r < 1$ and $w_5 > 1$: \\
    i) For $\nu \in (0,1)$, when the number of iterations $T \geq 0$ is known in advance, then with $\gamma_t = \frac{c}{(T+1)^a}$ for any constant $a < \frac{1}{2}$ and $c>0$, we obtain
    \begin{align}
        &\min_{0 \leq t \leq T} \|R_{\gamma_t}(x_t)\|^2 \leq \frac{B_{\psi}(x^*,x_0)}{\frac{\alpha r}{2} \left(1-\frac{1}{w_5} \right) c^2 (T+1)^{1-2a}} + \frac{\widetilde C c^{\frac{2\nu}{1-\nu}}}{\frac{\alpha r}{2} \left(1-\frac{1}{w_5} \right) (T+1)^{\frac{2\nu a}{1-\nu}}} , \nonumber
    \end{align}
    where  $\widetilde C = (1-\nu) \left[ \left( \frac{2^{2\nu - 1} L_{\nu}^2 \left(r(w_5-1) \alpha + 4 \right) \nu^\nu }{\alpha^{1+\nu}} \right)^{\frac{1}{1-\nu}} + \left( \frac{2^{1+\nu} L_{\nu}^2 \nu^\nu}{\alpha^{1+\nu} (1-r)^\nu}   \right)^{\frac{1}{1-\nu}} \right]$. On the other hand, when the number $T$ of iterations is not known apriori, then with $\gamma_t = \frac{c}{(t+1)^a}$, with $c>0$ and $a < \frac{1}{2}$, the following relation holds, for all $T \geq 0$,
    \begin{align}
        &\min_{0 \leq t \leq T} \|R_{\gamma_t}(x_t)\|^2 \nonumber\\
        &\leq \begin{cases}
            \frac{B_{\psi}(x^*,x_0)}{\frac{\alpha r}{2} \left(1-\frac{1}{w_5} \right) c^2 (T+1)^{1-2a}} + \frac{\widetilde C (1-\nu) c^{\frac{2 \nu}{1-\nu}}}{(1-\nu-2a) \frac{\alpha r}{2} \left(1-\frac{1}{w_5} \right) (T+1)^{\frac{2\nu a}{1-\nu}}} \quad &\text{for $0 < a < \frac{1 - \nu}{2}$,} \\
            \frac{B_{\psi}(x^*,x_0)}{\frac{\alpha r}{2} \left(1-\frac{1}{w_5} \right) c^2 (T+1)^{1-2a}} + \frac{\widetilde C c^{\frac{2 \nu}{1-\nu}} (1+\ln (T+1)) }{\frac{\alpha r}{2} \left(1-\frac{1}{w_5} \right) (T+1)^{1-2a}} \quad &\text{for $a=\frac{1-\nu}{2}$,} \\
            \frac{B_{\psi}(x^*,x_0)}{\frac{\alpha r}{2} \left(1-\frac{1}{w_5} \right) c^2 (T+1)^{1-2a}} + \frac{ 2a \widetilde C c^{\frac{2 \nu}{1-\nu}}}{(2a+\nu-1) \frac{\alpha r}{2} \left(1-\frac{1}{w_5} \right) (T+1)^{1-2a}} \quad &\text{for $\frac{1 - \nu}{2} < a < \frac{1}{2}$} .
        \end{cases} \nonumber
    \end{align}
    ii) For $\nu = 1$ (Lipschitz continuous case), a constant step size selection $\gamma_t = \gamma \leq \min \left( \frac{\alpha}{\sqrt{2} L_1 (\sqrt{r(w_5-1) \alpha + 4})} , \frac{\alpha \sqrt{1-r}}{2 L_1} \right)$ yields, for all $T \geq 0$,
    \begin{align}
        \min_{0 \leq t \leq T} \|R_{\gamma_t}(x_t)\|^2 \leq \frac{B_{\psi}(x^*,x_0)}{\frac{\alpha r}{2} \left(1-\frac{1}{w_5} \right) \gamma^2 (T+1)} . \nonumber
    \end{align}
\end{theorem}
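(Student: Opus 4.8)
The plan is to read everything off Lemma~\ref{lem-det-special}, parts (i) and (ii), which already isolate the residual terms $\|R_{\gamma_t}(x_t)\|^2$ and—crucially for this theorem—were proved \emph{without} using monotonicity of $F$ (Assumption~\ref{asum-monotone}) or boundedness of $X$ (Assumption~\ref{asum-set2}). The only new ingredient is the hypothesis that a Minty solution $x^*\in X$ exists. I would first set $z=x^*$ in Lemma~\ref{lem-det-special}(i) when $\nu\in(0,1)$, and in Lemma~\ref{lem-det-special}(ii) when $\nu=1$. Since every iterate $y_{t+1}$ lies in $X$ (it is an $\argmin$ over $X$), the Minty inequality~\eqref{minty-vi} gives $\la F(y_{t+1}), y_{t+1}-x^*\ra\ge 0$ for each $t$, hence $\sum_{t=0}^T\gamma_t\la F(y_{t+1}),y_{t+1}-x^*\ra\ge 0$, and this whole sum is dropped from the left-hand side. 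What is left for $\nu\in(0,1)$ is
\[
\tfrac{\alpha r}{2}\bigl(1-\tfrac1{w_5}\bigr)\sum_{t=0}^T\gamma_t^2\|R_{\gamma_t}(x_t)\|^2 \le B_\psi(x^*,x_0) + \widetilde C\sum_{t=0}^T\gamma_t^{\frac{2}{1-\nu}},
\]
where $\widetilde C$ is exactly the constant multiplying $\sum_t\gamma_t^{2/(1-\nu)}$ in Lemma~\ref{lem-det-special}(i); for $\nu=1$ there is no remainder term and one simply gets $\tfrac{\alpha r}{2}(1-\tfrac1{w_5})\gamma^2\sum_{t=0}^T\|R_\gamma(x_t)\|^2\le B_\psi(x^*,x_0)$.

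The next step is to lower-bound the left-hand side by factoring out the minimum, $\sum_{t=0}^T\gamma_t^2\|R_{\gamma_t}(x_t)\|^2\ge\bigl(\min_{0\le t\le T}\|R_{\gamma_t}(x_t)\|^2\bigr)\sum_{t=0}^T\gamma_t^2$, and then divide through by $\tfrac{\alpha r}{2}(1-\tfrac1{w_5})\sum_{t=0}^T\gamma_t^2$. For the known-$T$ constant step $\gamma_t\equiv c/(T+1)^a$ one has $\sum_{t=0}^T\gamma_t^2=c^2(T+1)^{1-2a}$ and $\sum_{t=0}^T\gamma_t^{2/(1-\nu)}=c^{2/(1-\nu)}(T+1)^{(1-\nu-2a)/(1-\nu)}$ directly; substituting and simplifying the exponent of $c$ (namely $\tfrac{2}{1-\nu}-2=\tfrac{2\nu}{1-\nu}$) and of $T+1$ (namely $\tfrac{1-\nu-2a}{1-\nu}-(1-2a)=-\tfrac{2\nu a}{1-\nu}$) yields the first display of part~(i). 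For the unknown-$T$ step $\gamma_t=c/(t+1)^a$ with $0<a<1/2$, I would invoke Lemma~\ref{lem-step-bd2}(v) for the lower bound $\sum_{t=0}^T\gamma_t^2\ge c^2(T+1)^{1-2a}$ and Lemma~\ref{lem-step-bd2}(ii) with $p=\nu$ for the upper bound on $\sum_{t=0}^T\gamma_t^{2/(1-\nu)}$; its three regimes $0<a<\tfrac{1-\nu}2$, $a=\tfrac{1-\nu}2$, $\tfrac{1-\nu}2<a<\tfrac12$ reproduce the three cases of part~(i). The only nontrivial manipulation is the elementary estimate $(T+2)^{(1-\nu-2a)/(1-\nu)}/(T+1)^{1-2a}\le 2^{(1-\nu-2a)/(1-\nu)}(T+1)^{-2\nu a/(1-\nu)}$, which follows from $(T+2)/(T+1)\le 2$ for $T\ge0$.

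For part~(ii) ($\nu=1$) the argument is the same but shorter: with the step-size restriction prescribed in Lemma~\ref{lem-det-special}(ii) the remainder term is absent, so from $\tfrac{\alpha r}{2}(1-\tfrac1{w_5})\gamma^2(T+1)\min_{0\le t\le T}\|R_\gamma(x_t)\|^2\le B_\psi(x^*,x_0)$ the claimed bound is immediate.

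I do not expect a genuine obstacle here: all the analytic labor—the Cauchy--Schwarz and Young splittings, the prox-mapping nonexpansiveness bound, the $x_0=y_0$ telescoping, and the application of Lemma~\ref{lem_max_val} to dominate the Bregman powers $(B_\psi(\cdot,\cdot))^\nu$ by multiples of $\gamma_t^{2/(1-\nu)}$—has already been discharged inside Lemma~\ref{lem-det-special}. What requires care is purely bookkeeping: tracking constants through the division so that $\widetilde C$ in the theorem matches the bracket of the lemma; checking that the split on $a$ relative to $\tfrac{1-\nu}2$ is compatible with the global restriction $a<\tfrac12$ (it is, since $\nu\in(0,1)$ forces $\tfrac{1-\nu}2<\tfrac12$, so all three sub-cases are admissible); and stating explicitly that neither Assumption~\ref{asum-monotone} nor Assumption~\ref{asum-set2} is used—only the existence of a Minty point, invoked once to discard a nonnegative sum.
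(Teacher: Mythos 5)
Your proposal is correct and follows essentially the same route as the paper: substitute the Minty point $z=x^*$ into Lemma~\ref{lem-det-special}(i) (resp.\ (ii)), drop the resulting non-negative sum $\sum_t\gamma_t\la F(y_{t+1}),y_{t+1}-x^*\ra$, lower-bound the residual sum by $\left(\sum_t\gamma_t^2\right)\min_t\|R_{\gamma_t}(x_t)\|^2$, and finish with Lemma~\ref{lem-step-bd2}(ii) and (v) for the diminishing-step cases. All constants and exponent computations check out against the paper's argument.
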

\begin{proof}
\textit{Case (i):} Let $\nu \in (0,1)$. The theorem assumes the existence of the Minty solution $x^*$ of the VI problem (cf.~\eqref{minty-vi}). We begin the analysis from the relation in Lemma~\ref{lem-det-special} and substitute $z = x^*$, the Minty solution. This makes the first term on the left-hand side non-positive, allowing it to be dropped, yielding,
    \begin{align}
        &\sum_{t=0}^T \gamma_t^2 \|R_{\gamma_t}(x_t)\|^2 \leq \frac{B_{\psi}(x^*,x_0)}{\frac{\alpha r}{2} \left(1-\frac{1}{w_5} \right)} + \frac{(1-\nu) \left( \sum_{t=0}^{T} \gamma_t^{\frac{2}{1-\nu}} \right)}{\frac{\alpha r}{2} \left(1-\frac{1}{w_5} \right)} \nonumber\\
        & \times \Bigg[ \left( \frac{2^{2\nu - 1} L_{\nu}^2 \left(r(w_5-1) \alpha + 4 \right) \nu^\nu }{\alpha^{1+\nu}} \right)^{\frac{1}{1-\nu}} + \left( \frac{2^{1+\nu} L_{\nu}^2 \nu^\nu}{\alpha^{1+\nu} (1-r)^\nu}   \right)^{\frac{1}{1-\nu}} \Bigg]  , \nonumber
    \end{align}
    where the constant $0<r<1$ and $w_5>1$. Note that the quantity on the left hand side of the preceding relation can be further lower estimated as $\sum_{t=0}^T \gamma_t^2 \|R_{\gamma_t}(x_t)\|^2 \geq \left(\sum_{t=0}^T \gamma_t^2 \right) \min_{0 \leq t \leq T} \|R_{\gamma_t}(x_t)\|^2$, which when used in the preceding relation yields
    \begin{align}
        &\min_{0 \leq t \leq T} \|R_{\gamma_t}(x_t)\|^2 \leq \frac{B_{\psi}(x^*,x_0)}{\frac{\alpha r}{2} \left(1-\frac{1}{w_5} \right) \sum_{t=0}^T \gamma_t^2} + \frac{\widetilde C \sum_{t=0}^{T} \gamma_t^{\frac{2}{1-\nu}}}{\frac{\alpha r}{2} \left(1-\frac{1}{w_5} \right) \sum_{t=0}^T \gamma_t^2} , \label{hol-spcl1}
    \end{align}
    where $\widetilde C$ is expressed in the theorem. The first relation of result (i) of the theorem can be obtained by choosing a constant step size $\gamma_t = \gamma = \frac{c}{(T+1)^a}$, with constant $a < \frac{1}{2}$ and $T$ is the total number of iterations we know in advance.

    To derive the second part of result (i), we set the step size $\gamma_t = \frac{c}{(t+1)^a}$, with $a < \frac{1}{2}$, substitute it into relation~\eqref{hol-spcl1}, and apply Lemma~\ref{lem-step-bd2}(ii) (with $p=\nu$) together with Lemma~\ref{lem-step-bd2}(v), which yields the desired result.\\

\noindent
\textit{Case (ii):} We start from Lemma~\ref{lem-det-special}(ii) and substitute $z = x^*$ as a Minty solution (assumed to exist in the theorem). We then drop the first quantity on its left hand-side and use a similar analysis as done for the case (i) to obtain
    \begin{align}
        \min_{0 \leq t \leq T} \|R_{\gamma_t}(x_t)\|^2 \leq \frac{B_{\psi}(x^*,x_0)}{\frac{\alpha r}{2} \left(1-\frac{1}{w_5} \right) \sum_{t=0}^T \gamma_t^2 } . \nonumber
    \end{align}
    We choose a constant step size $\gamma_t = \gamma \leq \min \left( \frac{\alpha}{\sqrt{2} L_1 (\sqrt{r(w_5-1) \alpha + 4})} , \frac{\alpha \sqrt{1-r}}{2 L_1} \right)$ for some constant $0< r < 1$ and $w_5 > 1$ to arrive at the stated relation. \hfill$\square$ 
\end{proof}
Theorem~\ref{thm-residual} establishes the convergence rate for the residual function, without requiring the monotonicity of the mapping $F$ (Assumption~\ref{asum-monotone}), but requires the assumption that a Minty solution must exist. The use of the residual function allows us to bypass the requirement  of the set $X$ boundedness (Assumption~\ref{asum-set2}) for $\nu \in (0,1]$, which is not possible when analyzing the method via the dual gap function. From Theorem~\ref{thm-residual}(i), we see that using a constant step size $\gamma_t = \gamma = {c}/{(T+1)^{\frac{1-\nu}{2}}}$, which depends on the H\"older exponent $\nu$, yields a convergence rate of $O(1/T^{\nu})$ in terms of the residual function. Furthermore, knowledge of problem parameters enables better choices of the constant $c$ that defines the step size (see Remark~\ref{rem-optimal-c}). In the Lipschitz continuous case (Theorem~\ref{thm-residual}(ii)), a convergence rate of $O(1/T)$ is achieved for the \textit{residual function} without any assumption on the monotonicity of the VI mapping, which appears to be a new result for the Popov mirror-prox algorithm to the best of our knowledge.

\section{Simulations}\label{sec:simulations}
In this section, we present the simulation results for our algorithm and compare them with the state-of-the-art. We consider the problem of a noisy matrix game, piecewise quadratic functions, and classifying the MNIST and the CIFAR-10 datasets using a ResNet-18 Convolutional Neural Network (CNN) model.

\paragraph{Noisy Matrix Game.}\label{noisy_mat_game}
The problem under consideration here is the stochastic version of the matrix game, whose deterministic counterpart has been studied in \cite{chakraborty2024popov}. The noisy matrix game problem is
\begin{align}
    \text{Player 1:} \quad &\min_{x_1\in \Delta_{2}} \;\; 
    \EXP{\la x_1,A x_1\ra + \la x_1, B x_2\ra + \la p + \xi_1 ,x_1\ra} \nonumber\\
    \text{Player 2:} \quad & \min_{x_2 \in \Delta_{2}} \;\; 
    \EXP{\la x_2, C x_2\ra + \la x_1, D x_2\ra  + \la q + \xi_2 , x_2\ra},  \nonumber 
\end{align}
where $x_1$ and $x_2$ are the decision variables of players 1 and 2, respectively, and $\Delta^2 \subset \R^2$ is the two-dimensional probability simplex, i.e., $x_1 \geq 0$, $x_2 \geq 0$, $\la x_1, e \ra = 1$, and $\la x_2, e \ra = 1$, where $e \in \R^2$ is the all-one vector. The constraint set $\Delta^2$ satisfies Assumptions~\ref{asum-set1} and \ref{asum-set2}. The variables $\xi_1$ and $\xi_2$ are random vectors with a normal distribution $\mathcal{N}(0,\Sigma)$, with $\Sigma=0.4 I$. Taking the gradient of the loss function of each player with respect to its own decision variable and concatenating the gradients into a vector, the game can be written as a stochastic VI problem with the stochastic mapping $\widehat F(\cdot)$ given by
\begin{align}
    \widehat F(x, \xi_1, \xi_2) = \begin{bmatrix}
        A+A^T & B \\
        D^T & C+C^T
    \end{bmatrix} x + \begin{bmatrix}
        p + \xi_1 \\
        q + \xi_2
    \end{bmatrix} , \nonumber 
\end{align}
where $x = [x_1, x_2]^T \in X \equiv \Delta^2 \times \Delta^2$. The expected mapping $F$ is
\begin{align}
    F(x) = \EXP{\widehat F(x, \xi_1, \xi_2)} = \begin{bmatrix}
        A+A^T & B \\
        D^T & C+C^T
    \end{bmatrix} x + \begin{bmatrix}
        p \\
        q
    \end{bmatrix} \nonumber 
\end{align}
since $\xi_1, \xi_2$ have zero mean. 
The Jacobian of the mapping $F$ is
\begin{align}
    \nabla F(x) = \begin{bmatrix}
        A+A^T & B \\
        D^T & C+C^T
    \end{bmatrix} . \nonumber
\end{align}
To satisfy Assumptions~\ref{asum-holder} (with $\nu = 1$, $M_\nu = 0$, and $L_\nu = L_1 = L$) and \ref{asum-monotone}, we construct the matrix $\nabla F(x)$ to have eigenvalues in $[0, L]$, where $L = 10$ is the Lipschitz constant of the expected mapping $F(\cdot)$. 
This is done using eigenvalue decomposition: a diagonal matrix with eigenvalues in $[0, L]$ is combined with an orthogonal matrix $Q$, obtained via the QR decomposition of a random matrix with entries drawn from a uniform distribution over $[0, 4]$. Details are provided in \cite[Section 7]{chakraborty2024random}. Vectors $p$ and $q$ are sampled from $\mathcal{N}(0,1)$. We compare the proposed Popov Mirror-Prox (PMP) algorithm with the Korpelevich Mirror-Prox (KMP) \cite{juditsky2011solving} and the Universal Mirror-Prox (UMP) \cite{klimza2024universal}, which is the Korpelevich method in Euclidean space with adaptive step sizes. Two choices of $\psi(\cdot)$ are used for the Korpelevich and Popov methods: (i)~Euclidean: $\psi(x) = \frac{1}{2} \|x\|_2^2$ and (ii)~Entropic: $\psi(x) = \sum_{j=1}^2 \sum_{i=1}^2 x_j^{(i)}(\ln x_j^{(i)}-1)$. UMP uses the Euclidean case. Note that for the Euclidean case, $\nabla \psi(x) = x$ and $\nabla^2 \psi(x) = I$, whereas for the Entropic case, $\nabla \psi(x) = [\ln x_1^{(1)},\ln x_1^{(2)},\ln x_2^{(1)},\ln x_2^{(2)}]^T$ and $\nabla^2 \psi(x) \succcurlyeq \frac{1}{\max_{j,i} x_j^{(i)}} I \succcurlyeq I$ since the decision variables are in the probability simplex, where $I$ is the identity matrix.
In both cases, $\psi$ is strongly convex with constant $\alpha = 1$.

The KMP algorithm is run for both Euclidean and Entropic cases using the step size $\gamma = \frac{1}{\sqrt{3} L}$ (see \cite[Theorem 1]{juditsky2011solving}), where $L = 10$ in our setup. The UMP algorithm is nearly parameter-free, except it requires the diameter $R$ of the constraint set $\Delta^2 \times \Delta^2$, such that $\max_{x,y \in \Delta^2 \times \Delta^2} \|x-y\| \leq R$. The maximum distance between two points in the set $\Delta^2$ is $\sqrt{2}$. Since our domain is a Cartesian product of two probability simplexes, we set $R = 2\sqrt{2}$. 
We implement PMP using both constant and diminishing step sizes, \textit{assuming no knowledge of problem parameters}. We arbitrarily set $c = 1$ for both Euclidean and Entropic cases and run the algorithm for $400$ iterations. 
For a fair comparison, we also run KMP using the same step-size scheme and averaging method as in PMP. While a similar convergence analysis can be carried out for KMP, yielding comparable rates, we omit the proof since it follows the same structure. For the Euclidean case, the Popov updates are:
\begin{align*}
   &y_{t+1}=\Pi_{\Delta^2 \times \Delta^2}[x_t-\gamma_t \widehat F(y_t,\xi_1^{(t)},\xi_2^{(t)})], \cr
   &x_{t+1} = \Pi_{\Delta^2 \times \Delta^2} [x_t- \gamma_t F(y_{t+1}, \xi_1^{(t+1)}, \xi_2^{(t+1)})],
\end{align*}
where $\xi_1^{(t)}$ and $\xi_2^{(t)}$ are the noise samples and $\widehat F(y_t,\xi_1^{(t)},\xi_2^{(t)})$ is the stochastic mapping used to update both $x_t$ and $y_{t+1}$. The projection on $\Delta^2$ for both players $1$ and $2$ are done using the bisection algorithm for simplex projection~\cite[Algorithm 3]{blondel2014large}. The same projection technique is also used for KMP and UMP. These methods require two stochastic mapping evaluations with different noise samples in the iterate update. For the Entropic case, PMP updates are
%
%
%
\begin{figure}[t]\centering
	\begin{subfigure}{.495\linewidth}
		\includegraphics[width=1\linewidth, height = 0.75\linewidth]
		{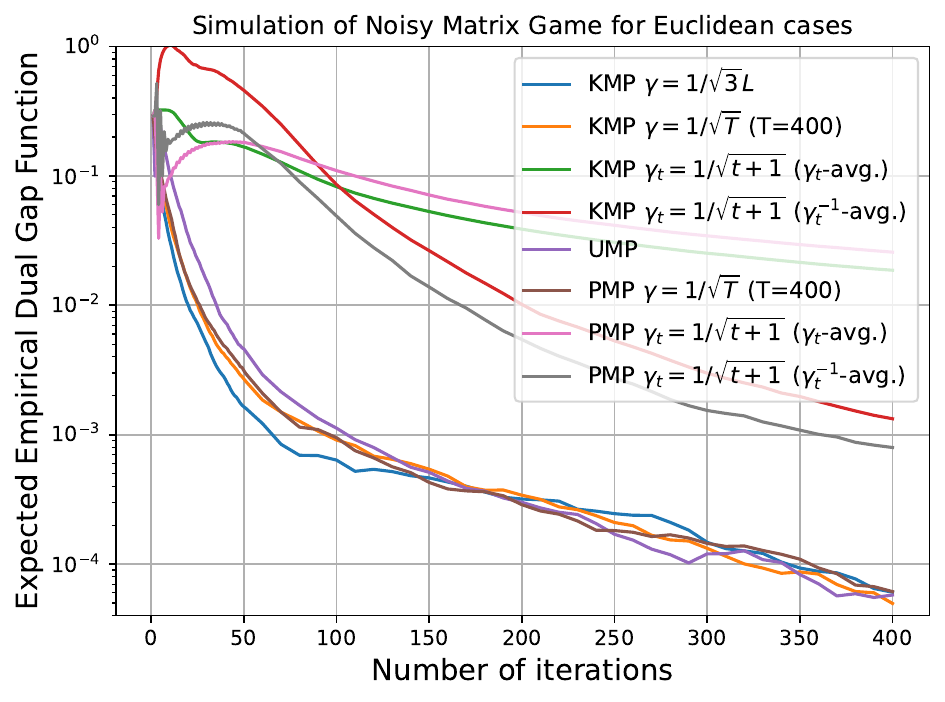}
		\caption{Euclidean case}
		\label{noisy_mat_eu}
	\end{subfigure}
	\begin{subfigure}{.495\linewidth}
		\includegraphics[width=1\linewidth,height = 0.75\linewidth]
		{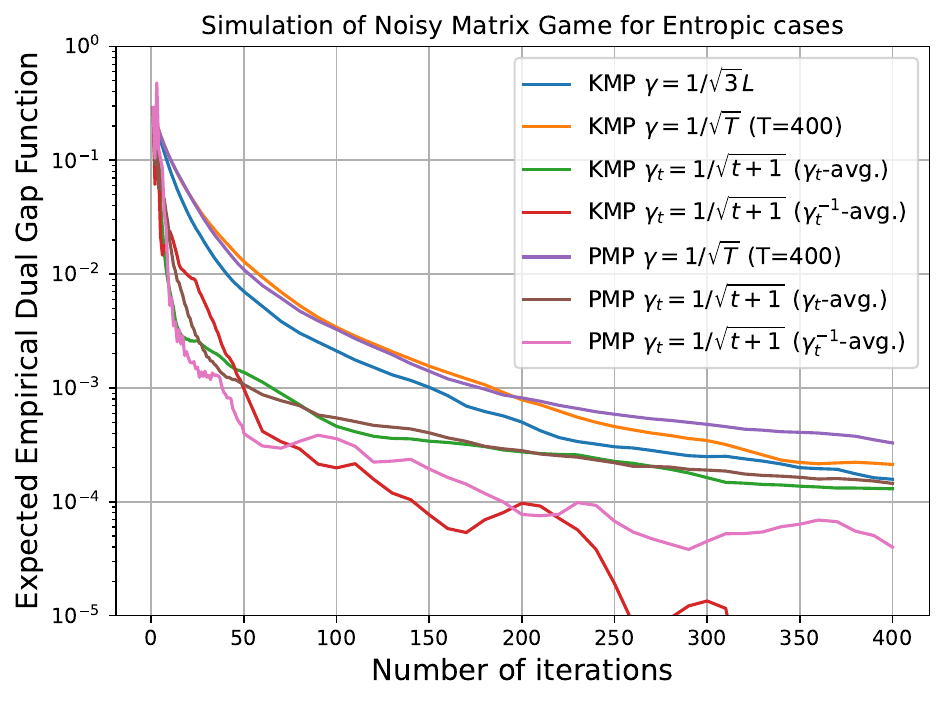}
		\caption{Entropic case}
        \label{noisy_mat_ent}
	\end{subfigure}
 \caption{Convergence of the Universal Mirror-Prox (UMP) algorithm, and the Korpelevich Mirror-Prox (KMP) and Popov Mirror-Prox (PMP) algorithms for Euclidean and Entropic cases, using various step-size schemes. `$\gamma_t$-avg.' and `$\gamma_t^{-1}$-avg.' denotes the averaging of the iterates with step sizes and inverse of the step sizes as weights, respectively.}
\label{fig_mat_game}
\end{figure}
%
%
%
%
%
%
\begin{align}
   &[y_{t+1}]_i = [z_{y,t+1}]_i [x_t]_i\exp(- \gamma_t [\widehat F(y_t,\xi_1^{(t)},\xi_2^{(t)})]_i), \quad i=1,2,3,4, \nonumber\\
   &[x_{t+1}]_i = [z_{x,t+1}]_i [x_t]_i\exp(- \gamma_t [F(y_{t+1}, \xi_1^{(t+1)}, \xi_2^{(t+1)})]_i), \quad i=1,2,3,4 \nonumber ,
\end{align}
where $[z_{y,t+1}]_i$ and $[z_{x,t+1}]_i$ are the normalizing coefficients with $i$ as the coordinate index. For player 1, ($i=1,2$), the normalizing coefficient $[z_{y,t+1}]_i$ is 
\begin{align}
    [z_{y,t+1}]_i = \frac{1}{[x_t]_1\exp(- \gamma_t [\widehat F(y_t,\xi_1^{(t)},\xi_2^{(t)})]_1+[x_t]_2\exp(- \gamma_t [\widehat F(y_t,\xi_1^{(t)},\xi_2^{(t)})]_2}, \nonumber
\end{align}
and for Player 2 ($i = 3, 4$), the same coefficient is of the form
\begin{align}
    [z_{y,t+1}]_i = \frac{1}{[x_t]_3\exp(- \gamma_t [\widehat F(y_t,\xi_1^{(t)},\xi_2^{(t)})]_3+[x_t]_4\exp(- \gamma_y [\widehat F(y_t,\xi_1^{(t)},\xi_2^{(t)})]_4}. \nonumber
\end{align}
The normalizing coefficients $[z_{x,t+1}]_i$ for the second update are computed similarly. These updates ensure that iterates remain positive (given positive initialization), so normalization alone projects them onto the probability simplex, eliminating the need for explicit projection and reducing computational cost. PMP in the Entropic case is implemented accordingly. To empirically estimate the dual gap function, we generate 200,000 random samples in $\Delta^2 \times \Delta^2$, representing the variable $z \in X$ in expression~\ref{dg-gap}. For each algorithm and at every iteration, we substitute the corresponding averaged iterate -- whose performance we aim to evaluate -- for the variable $x$ in expression~\ref{dg-gap}, compute the inner product, and take the maximum value over the $z$ points. The expected dual gap is estimated by averaging over 5 independent runs for each algorithm.
%
%
\begin{figure}[t]\centering
	\begin{subfigure}{.327\linewidth}
		\includegraphics[width=1\linewidth, height = 0.75\linewidth]
		{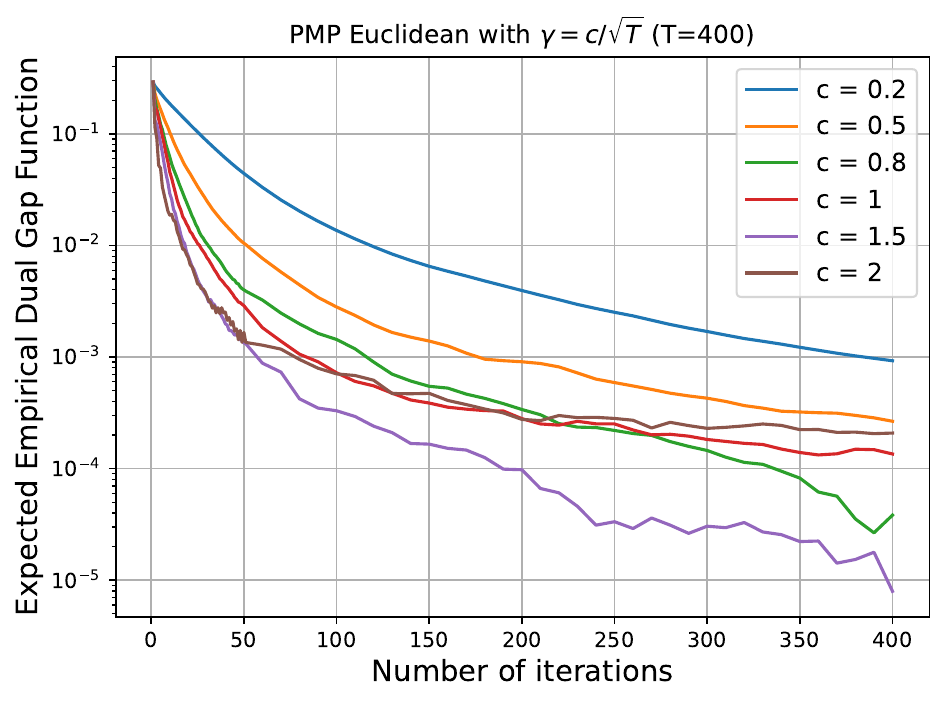}
	\end{subfigure}
	\begin{subfigure}{.327\linewidth}
		\includegraphics[width=1\linewidth, height = 0.75\linewidth]
		{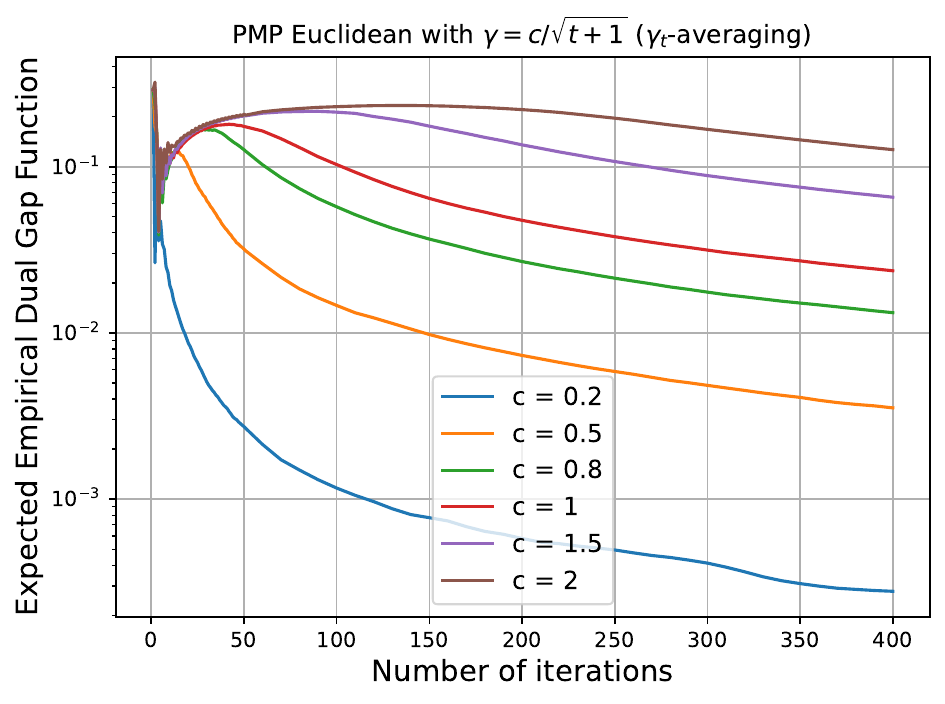}
	\end{subfigure}
    \begin{subfigure}{.327\linewidth}
		\includegraphics[width=1\linewidth, height = 0.75\linewidth]
		{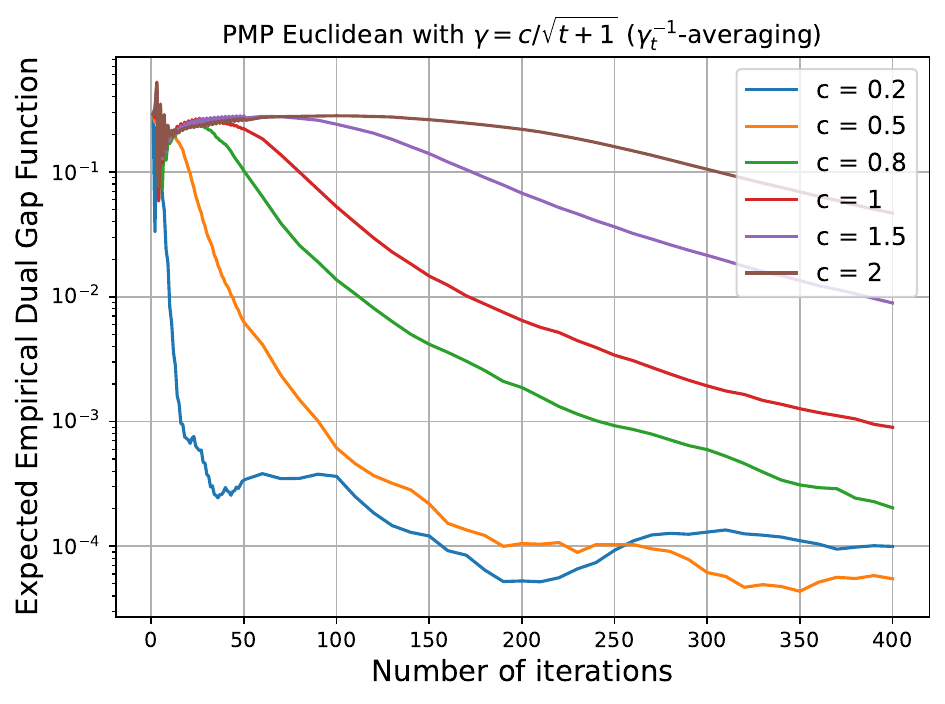}
	\end{subfigure}
    \begin{subfigure}{.327\linewidth}
		\includegraphics[width=1\linewidth, height = 0.75\linewidth]
		{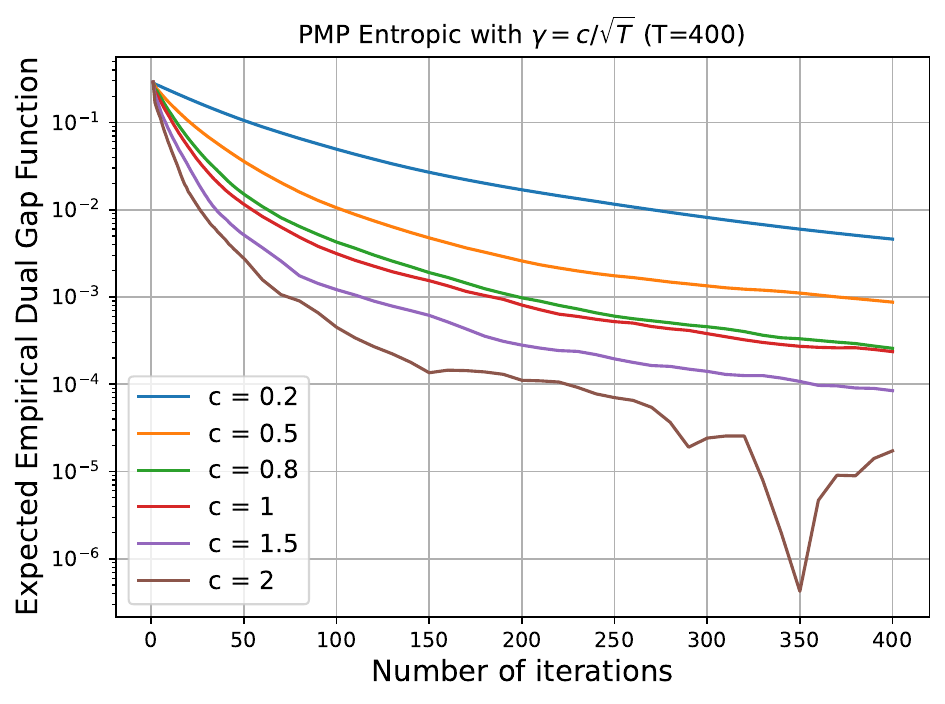}
	\end{subfigure}
    \begin{subfigure}{.327\linewidth}
		\includegraphics[width=1\linewidth, height = 0.75\linewidth]
		{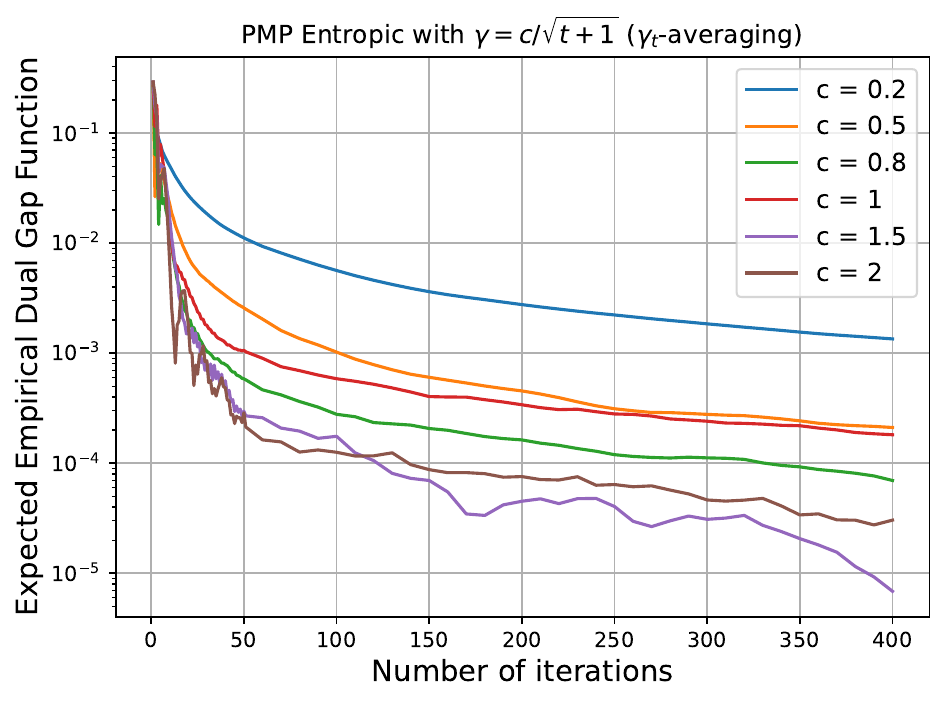}
	\end{subfigure}
    \begin{subfigure}{.327\linewidth}
		\includegraphics[width=1\linewidth, height = 0.75\linewidth]
		{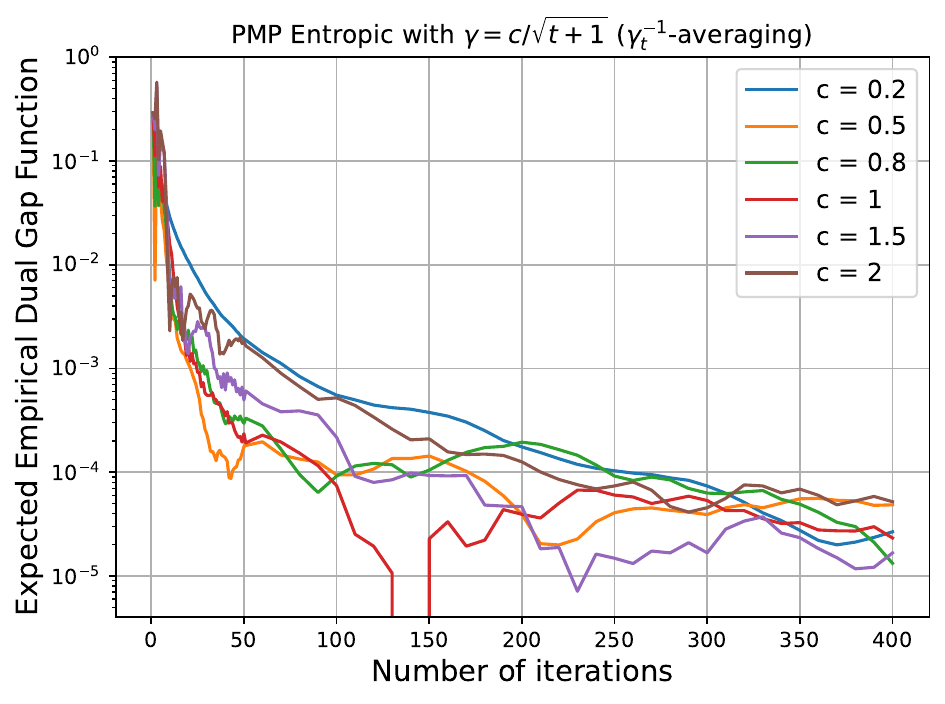}
	\end{subfigure}
 \caption{Convergence rates of the Popov Mirror-Prox (PMP) algorithm for both Euclidean and Entropic cases with different values of the constant $c$ in the step size. The figure presents results for constant step-size and diminishing step-size with iterate averaging with $\gamma_t$ and $\gamma_t^{-1}$ as weights represented as $\gamma_t$-averaging and $\gamma_t^{-1}$-averaging, respectively.}
\label{fig_diff_c}
\end{figure}
%
%
%

We run the PMP and KMP algorithms using our step size schemes with the choice of $c=1$ for both the Entropic and Euclidean cases, without tuning, assuming no knowledge of the problem parameters. Figures~\ref{noisy_mat_eu} and~\ref{noisy_mat_ent} show the convergence of the algorithms with respect to the expected empirical dual gap function for the Euclidean and the Entropic cases, respectively. In Figure~\ref{noisy_mat_eu}, we can see that the constant step size schemes for both KMP and PMP perform better than averaging of iterates with $\gamma_t$ and $\gamma_t^{-1}$ as weights. The UMP method, being adaptive, performs comparable to that of constant step size methods but still requires knowledge of the set diameter, unlike our parameter-free PMP schemes. 
Figure~\ref{noisy_mat_ent} shows that for the Entropic case, the averaging of the iterates with $\gamma_t^{-1}$ as weights for KMP converges the fastest. PMP with $\gamma_t^{-1}$-averaging also performs well. Overall, our PMP method achieves comparable performance while requiring only half of the mapping computations compared to the UMP and the KMP methods.

To explore the effect of tuning $c$, we simulate PMP for both Euclidean and Entropic cases under constant and diminishing step sizes with values of $c \in \{0.2,0.5,0.8,1,1.5,2\}$. The results in Figure~\ref{fig_diff_c}, which indicate that $c = 1.5$ (Euclidean) and $c = 2$ (Entropic) yield the best results for constant step sizes. For the averaging of iterates with $\gamma_t$ and $\gamma_t^{-1}$ as weights, $c = 0.2$ (Euclidean) and $c = 1.5$ (Entropic) works well. These choices of $c$ were guided by the observation that an optimal $c$ exists within a moderate range (see Remark~\ref{rem-optimal-c}). 



\begin{figure}[t]\centering
	\begin{subfigure}{.495\linewidth}
		\includegraphics[width=1\linewidth, height = 0.75\linewidth]
		{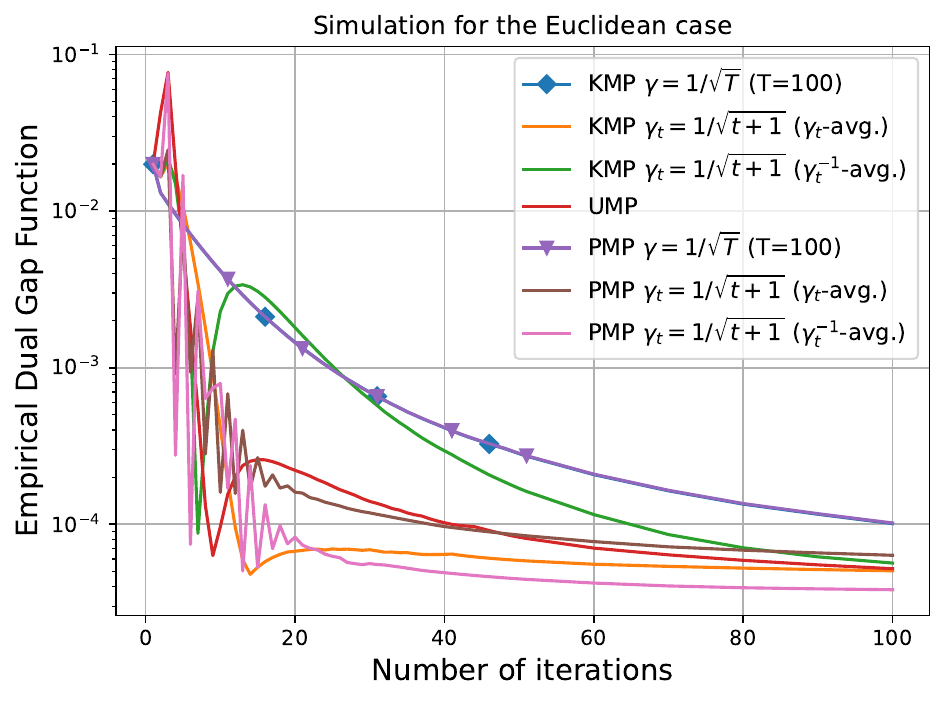}
	\end{subfigure}
	\begin{subfigure}{.495\linewidth}
		\includegraphics[width=1\linewidth,height = 0.75\linewidth]
		{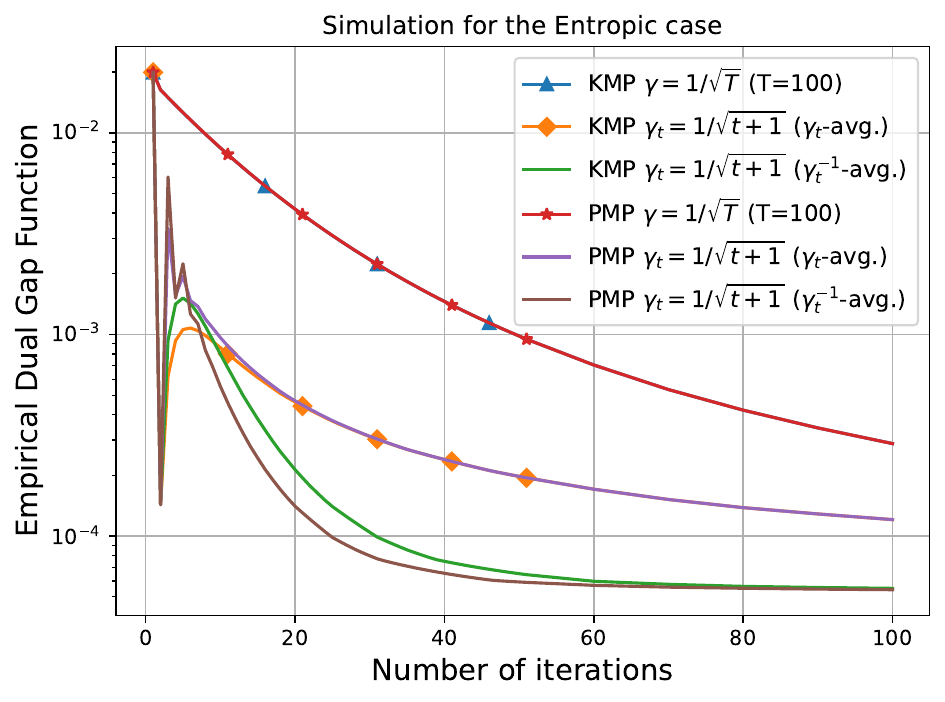}
	\end{subfigure}
 \caption{Convergence of the various algorithms for the deterministic case. Averaging with $\gamma_t$ and $\gamma_t^{-1}$ as weights are stated as `$\gamma_t$-avg.' and `$\gamma_t^{-1}$-avg.' respectively.}
\label{fig_piecewise_det}
\end{figure}


\paragraph{Piecewise Quadratic Functions} 
Consider an optimization problem where the loss function $\ell(\cdot): \Delta^2 \rightarrow \R$ is  the maximum of piecewise quadratic functions:
\begin{align}
    \ell(x) = \max_{i=1,2,3,4} \ell_i(x), \quad \text{where } \ell_i(x) = \left\{\frac{1}{2} \la x, A_i x \ra + \la b_i,x \ra \right\} . \nonumber
\end{align}
The optimization problem under consideration is
\begin{align}
    \min_{x \in \Delta^2} \ell(x) = \min_{x \in \Delta^2} \left\{\max_{i=1,2,3,4} \left\{\frac{1}{2} \la x, A_i x \ra + \la b_i,x \ra \right\} \right\} , \nonumber
\end{align}
where the set $\Delta^2$ is the $2$-dimensional probability simplex satisfying Assumptions~\ref{asum-set1} and~\ref{asum-set2}. We construct the matrices $A_i$'s to be Positive Semi-Definite (PSD) with eigenvalues in the range $[0,L_i]$, where we choose $L_i = i$, for each $i = 1,2,3,4$. Their construction follows the method used in the noisy matrix game experiment. The PSD property ensures that each $i$-th quadratic loss function $\ell_i(x)$ is convex, for all $i = 1,2,3,4$. Since the point-wise maximum of convex functions is convex \cite[Section 3.2.3]{boyd2004convex}, $\ell(x)$ is also convex. The vector $b_1 \in \R^2$ is sampled from $\mathcal{N}(0,1)$. We randomly choose 3 transition points $x_{\text{trans},1}, x_{\text{trans},2}$, and $x_{\text{trans},3}$ in the simplex and compute $b_i = (A_{i+1} - A_i) x_{\text{trans},i-1} + b_{i-1}$, for $i = 2,3,4$, ensuring each piecewise component is active over distinct regions of the domain. The loss function $\ell(x)$ is non-smooth at the transition points. We define an index set $\mathcal{I}(x)$, which denotes the set of active indices of the loss function where the maximum is attained at a particular point $x$, i.e., $\mathcal{I}(x) = \{ i \mid \ell_i(x) = \ell(x)\}$. Here, we need to consider a set-valued mapping $F: \Delta^2 \rightrightarrows \R^2$ which is the subdifferential set for the loss function $\ell(x)$ given by
\begin{align}
    F(x) = \partial \ell(x) = \text{conv} \left \{\frac{1}{2} (A_i + A_i^T) x + b_i \mid i \in \mathcal{I}(x) \right\}, \label{map_piecewise}
\end{align}
where conv denotes the convex hull of the set, i.e., it is given by
\begin{align*}
\left\{ \sum_{i \in \mathcal{I}(x)} \lambda_i \left( \frac{1}{2}(A_i + A_i^\top)x + b_i \right) 
\,\middle|\, \lambda_i \geq 0,\, \sum_{i \in \mathcal{I}(x)} \lambda_i = 1 \right\} .
\end{align*}
As the function $\ell(\cdot)$ is convex and its domain is the entire space, its subdifferential set $\partial \ell(\cdot)$ is non-empty everywhere \cite[Proposition 10.21]{rockafellar2009variational} and also monotone~\cite[Theorem 12.17]{rockafellar2009variational}
\[\la u-v, x-y \ra \geq 0 , \quad \text{for all $x, y \in \R^2$, $u \in \partial \ell(x)$, and $v \in \partial \ell(y)$}.\]
Therefore, the set-valued mapping $F(\cdot)$ is a well defined and monotone over $\R^2$ (hence Assumption~\ref{asum-monotone} is satisfied for such a mapping). Since the constraint set is a simplex which is compact, Assumption~\ref{asum-holder} will be satisfied pointwise for the set-valued mapping $F(\cdot)$ with $\nu = 0$, i.e., a bounded variation.

As done for the matrix game, assuming no knowledge of problem parameters, we consider both Euclidean and Entropic cases, which ensures the function $\psi$ is $\alpha$-strongly convex with $\alpha = 1$. We simulate our PMP method with constant and diminishing step sizes using both $\gamma_t$ and $\gamma_t^{-1}$ as the averaging weights. We compare against KMP with similar step sizes and UMP, where the UMP requires knowledge of the diameter $R = \max_{x,y \in \Delta^2} \|x-y\|$. Here, $R = \sqrt{2}$.


\begin{figure}[t]\centering
	\begin{subfigure}{.495\linewidth}
		\includegraphics[width=1\linewidth, height = 0.75\linewidth]
		{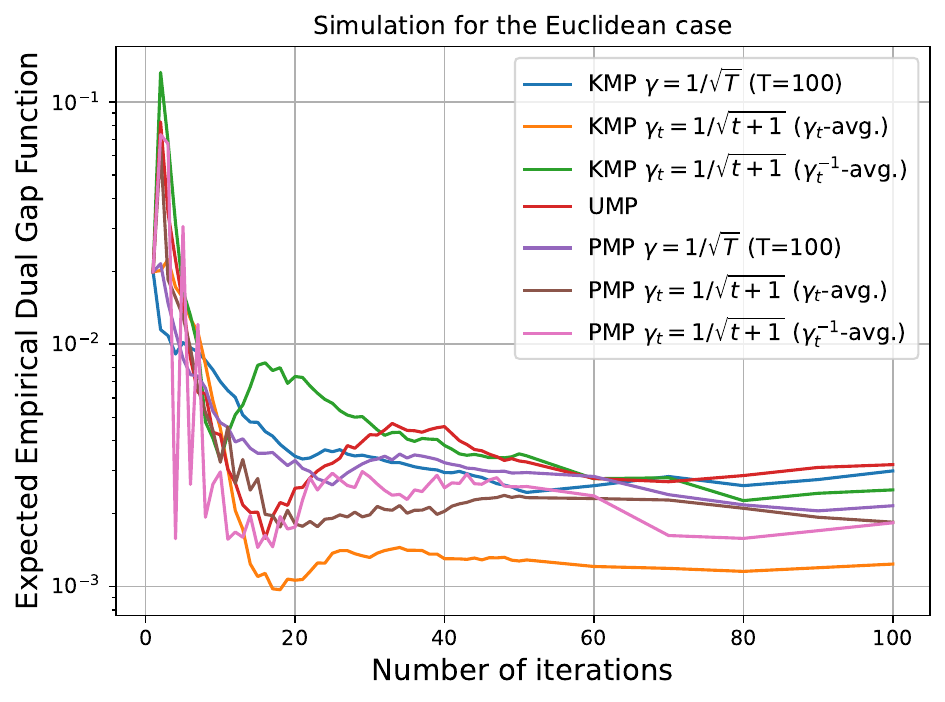}
	\end{subfigure}
	\begin{subfigure}{.495\linewidth}
		\includegraphics[width=1\linewidth,height = 0.75\linewidth]
		{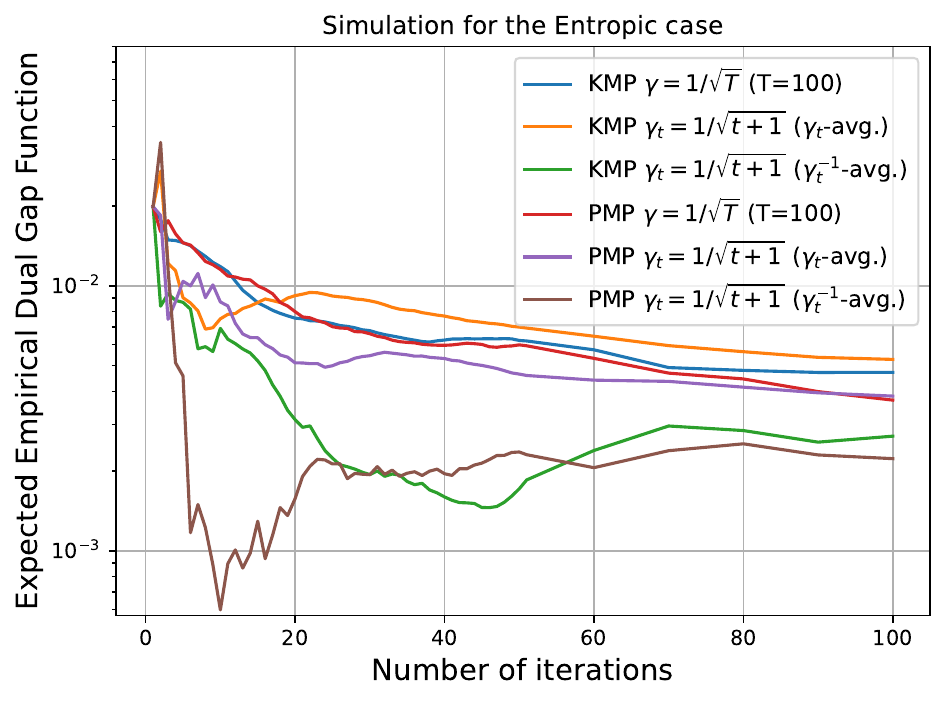}
	\end{subfigure}
 \caption{Convergence of the various algorithms for the stochastic case. `$\gamma_t$-avg.' and `$\gamma_t^{-1}$-avg.' denotes averaging with $\gamma_t$ and $\gamma_t^{-1}$, respectively, as weights.}
\label{fig_piecewise_stoc}
\end{figure}


The dual gap function is empirically estimated as done in the Noisy Matrix Game experiment by randomly sampling $200,000$ points from the simplex $\Delta^2$. Since we sample points from a continuous distribution, with probability $1$, none of the sampling points coincide with the points where the mapping is not single-valued. The projection onto the simplex in the iterate updates is performed as described in the Noisy Matrix Game experiment. Figure~\ref{fig_piecewise_det} shows the convergence behavior of KMP, UMP, and PMP under various step-size schemes. The results demonstrate that PMP with diminishing step sizes, and iterate averaging with $\gamma_t^{-1}$ as weights, achieves the best performance in both Euclidean and Entropic cases.

We have also considered a stochastic version of the previous problem
\begin{align}
    \min_{x \in \Delta^2} \left\{\max_{i=1,2,3,4} \left\{ \EXP{\frac{1}{2} \la x, A_i x \ra + \la b_i + \xi,x \ra} \right\} \right\} , \nonumber
\end{align}
where $\xi \sim \mathcal{N}(0,\Sigma)$, $\Sigma= 0.4I$ is the noise. The problem settings are same as the deterministic case and, hence, the expected value of the mapping is given by relation~\eqref{map_piecewise}. This problem also satisfies all the previous assumptions and the stochastic set-valued map can be written as
\begin{align}
    \widehat F(x,\xi) = \text{conv} \left \{\frac{1}{2} (A_i + A_i^T) x + b_i + \xi \mid i \in \mathcal{I}(x) \right\} . \nonumber
\end{align}
In order to compute the expectation of the empirical dual gap function, we run $5$ different experiments and average over it.

Figure~\ref{fig_piecewise_stoc} shows the convergence of the stochastic algorithms. In the Euclidean case, KMP with diminishing step size and $\gamma_t$ as averaging weights performs better than the other methods, while for the Entropic case, PMP with diminishing step size and $\gamma_t^{-1}$ as averaging weights achieve superior performance. Notably, our step sizes are entirely parameter-free, and our PMP method requires half as many mapping computations as the other two algorithms.


\paragraph{MNIST and CIFAR-10 data classification using ResNet-18 model.}
We evaluate the classification performance of a ResNet-18 CNN (without pretraining) on the MNIST and CIFAR-10 datasets. Let $\{u_i, v_i\}_{i=1}^m$ denote the training data, where $u_i$ is an input image and $v_i \in \{0,1\}^C$ is its one-hot encoded class label. Thus, $v_{ij}=1$ only if $u_i$ belongs to the class $j \in \{1,\ldots,C\}$. MNIST contains grayscale images of size $28 \times 28 \times 1$ representing handwritten digits $0-9$, while CIFAR-10 comprises of RGB colored images of size $32 \times 32 \times 3$ from 10 object classes: airplane, automobile, bird, cat, deer, dog, frog, horse, ship, and truck. Hence, for both the datasets, total number of classes $C = 10$.


\begin{figure}[t]\centering
	\begin{subfigure}{.495\linewidth}
		\includegraphics[width=1\linewidth, height = 0.75\linewidth]
		{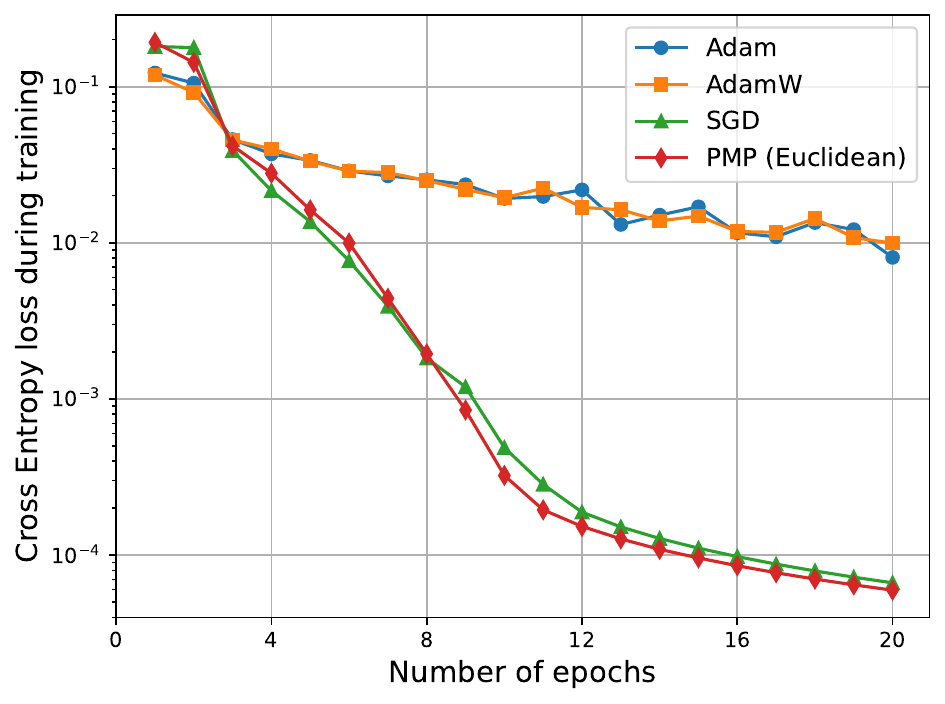}
	\end{subfigure}
	\begin{subfigure}{.495\linewidth}
		\includegraphics[width=1\linewidth,height = 0.75\linewidth]
		{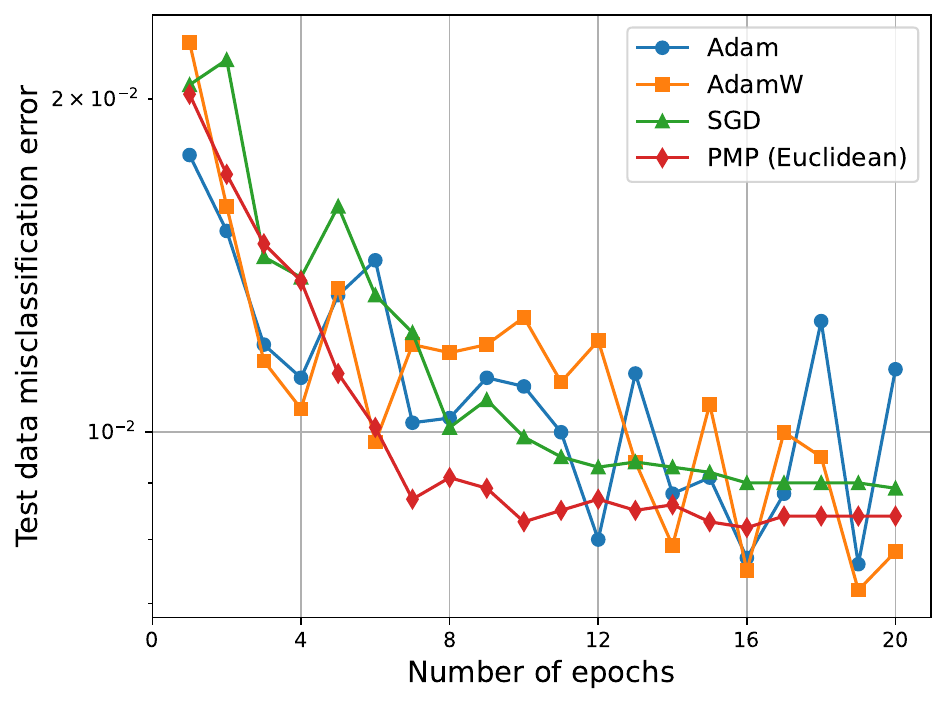}
	\end{subfigure}
 \caption{Training loss and testing misclassification error for Adam, AdamW, SGD, and PMP (Euclidean case) while training ResNet-18 on MNIST dataset.}
\label{fig_mnist}
\end{figure}



\begin{figure}[t]\centering
	\begin{subfigure}{.495\linewidth}
		\includegraphics[width=1\linewidth, height = 0.75\linewidth]
		{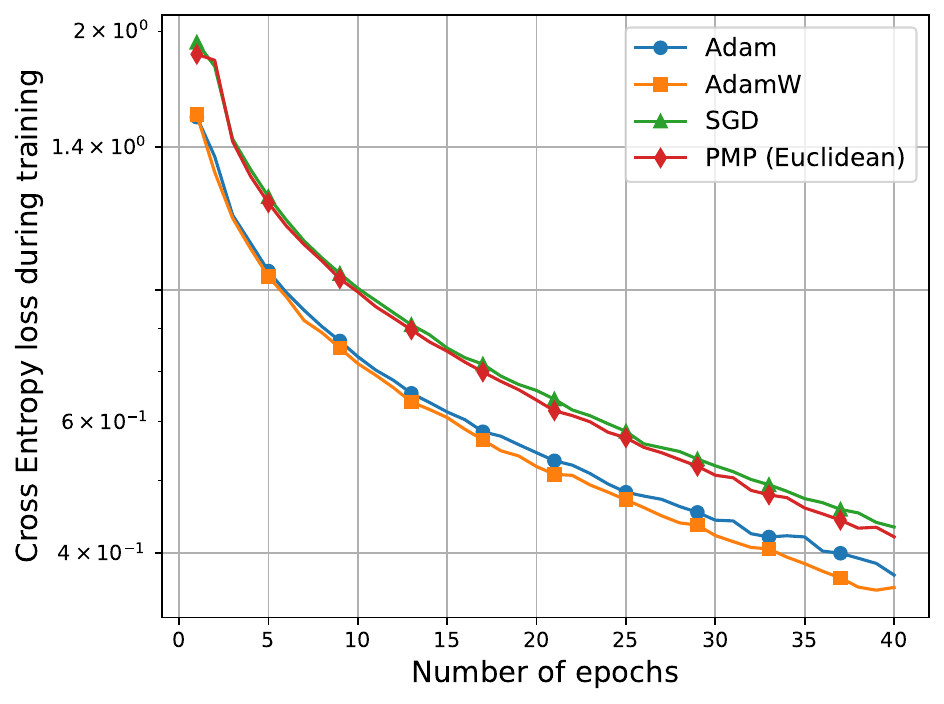}
	\end{subfigure}
	\begin{subfigure}{.495\linewidth}
		\includegraphics[width=1\linewidth,height = 0.75\linewidth]
		{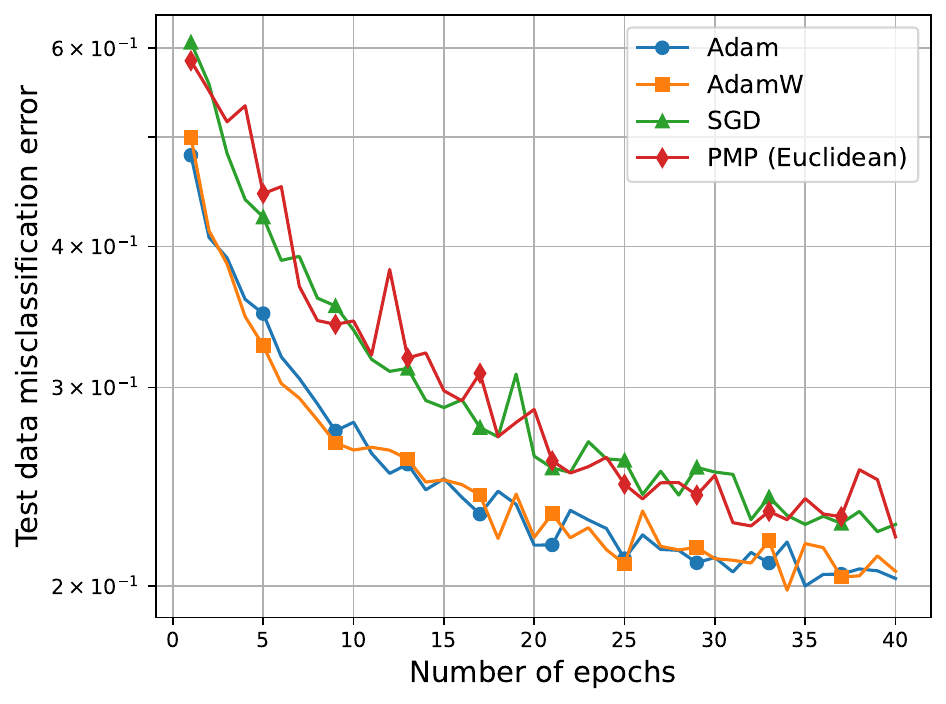}
	\end{subfigure}
 \caption{Training loss and test error for Adam, AdamW, SGD, and PMP (Euclidean case) optimizers while training ResNet-18 on the CIFAR-10 dataset.}
\label{fig_cifar10}
\end{figure}


Let $x \in \R^n$ denote the model parameters and $f(u_i;x) \in \R^C$ the output logits for the input data $u_i$. The model applies a softmax to produce a probability vector $p(u_i;x) \in \Delta^C$, where $\Delta^C$ is $C$-dimensional probability simplex and
\begin{align*}
    p(u_i;x) = \frac{\exp(f_j(u_i,x))}{\sum_{z=1}^C \exp(f_z(u_i,x))} \quad \text{for class $j$.}
\end{align*}
The goal is to minimize the cross-entropy loss between $p(u_i;x)$ and $v_i$. The instantaneous cross-entropy loss for the data $(u_i,v_i)$ is expressed as
\begin{align*}
    \ell_{CE}(p(u_i;x), v_i) = - \sum_{j = 1}^C y_{ij} \log(p_j(u_i;x)) .
\end{align*}
This leads to the Empirical Risk Minimization (ERM) problem:
\begin{align*}
    \min_{x \in \R^n} L(x) := \frac{1}{m} \sum_{i=1}^m \ell_{CE}(p(u_i;x), v_i) ,
\end{align*}
with associated variational inequality mapping $F(x) = \nabla_x L(x)$. While Assumption~\ref{asum-monotone} may not hold, Assumption~\ref{asum-holder} remains valid if the VI mapping is bounded. We implement our PMP algorithm with the Euclidean mirror map $\psi(\cdot) = \frac{1}{2}\|\cdot\|^2$, using a constant step size $\gamma = \frac{c}{\sqrt{T}}$, and compare against Stochastic Gradient Descent (SGD) \cite{robbins1951stochastic, bottou2010large}, Adam \cite{kingma2014adam}, and AdamW \cite{loshchilov2017decoupled}. All methods use a mini-batch size of $128$; the stochastic mapping at iteration $t$ is:
\begin{align*}
    \widehat F(x_t,\{u_i,v_i\}_{i\in \mathcal{I}_t}) = \frac{1}{128} \sum_{i\in \mathcal{I}_t} \nabla_x \ell_{CE}(p(u_i;x_t), v_i) . 
\end{align*}
For PMP, we report training loss and test accuracy using the last iterate; for other methods, we use PyTorch's built-in implementations.

MNIST contains 60,000 training and 10,000 test samples. We run all methods for 20 epochs. Note that a single epoch represents a complete pass over the entire training dataset. We select $\gamma = 0.01$ for PMP and SGD based on the smallest training loss after 20 epochs. Adam and AdamW use PyTorch's default learning rates. The UMP is excluded since the set diameter within which the iterate stays is not known. Figure~\ref{fig_mnist} shows the decay in training loss and test misclassification error over epochs. PMP performs competitively, slightly outperforming SGD on test accuracy and showing strong performance on the relatively simple MNIST dataset.


We next evaluate the more complex CIFAR-10 dataset, that contains 50,000 training and 10,000 test samples. For PMP and SGD, we use a fixed step size $\gamma = 0.2$, selected based on the smallest training loss over 20 epochs. Adam and AdamW use their default step sizes. All methods are run for 40 epochs. Figure~\ref{fig_cifar10} shows the training loss and test error decay over epochs. The Adam and AdamW outperform the other methods, benefiting from momentum and adaptive step sizes. The PMP and SGD show comparable test error performance, while AdamW achieves the smallest training loss. These results suggest that incorporating momentum or adaptive step sizes into the PMP could improve its performance, which are interesting directions for future work, but fall outside the scope of this paper.
\section{Conclusion}\label{sec:conclusion}
In this paper, we analyzed both stochastic and deterministic variants of the Popov Mirror-Prox algorithm and established their optimal convergence rates under various step size schemes. For monotone VIs with polynomial growth and bounded variance \( M_\nu > 0 \), we showed that both variants achieve the optimal rate \( O(1/\sqrt{T}) \) for the dual gap function over compact sets using fully parameter-free step sizes. For deterministic monotone VIs with H\"older continuous mappings, knowledge of the H\"older exponent \( \nu \in (0,1] \) yields an improved rate of \( O(1/T^{\frac{1+\nu}{2}}) \) for the gap function. In the same setting, a rate of \( O(1/T^{\nu}) \) can be achieved for the residual function without requiring the monotonicity of the mapping or the set boundedness assumptions.

\section*{Declarations}


\noindent \textbf{Conflict of interest:} The authors declare that there are no conflicts of interest.

\appendix

\section{Proofs of Preliminary Results}

\subsection{Proof of Lemma~\ref{lem-step-bound}}\label{lem-step-bound-proof}

\begin{proof}
    With $\gamma_t = \frac{c}{\sqrt{t+1}}$, the upper bound on $\sum_{t=\lceil \frac{T}{2} \rceil}^T \gamma_t^2$ can be obtained as
    \begin{align}
        \sum_{t=\lceil \frac{T}{2} \rceil}^T \gamma_t^2 &\leq \int_{t=\lceil \frac{T}{2} \rceil - 1}^{T} \frac{c^2}{t+1} dt \nonumber\\
        & = c^2 (\ln(T + 1) - \ln(\lceil {T}/{2} \rceil)) \nonumber\\
        & = c^2 \ln \left(\frac{T+1}{\lceil {T}/{2} \rceil} \right) \leq c^2 \ln \left(\frac{T+1}{{T}/{2}} \right) \leq c^2 \ln \left(2+\frac{2}{T} \right) \leq c^2 \ln(4) . \nonumber
    \end{align}
    Moreover the lower bound on $\sum_{t=\lceil \frac{T}{2} \rceil}^T \gamma_t$ becomes
    \begin{align}
        &\sum_{t=\lceil \frac{T}{2} \rceil}^T \gamma_t \geq  \sum_{t=\lceil \frac{T}{2} \rceil}^T \frac{c}{\sqrt{T+1}} = \frac{c}{\sqrt{T+1}} \left( T- \left\lceil \frac{T}{2} \right\rceil + 1 \right) \nonumber\\
        &\geq \frac{c}{\sqrt{T+1}} \left( T+1 - \frac{T+1}{2} \right) = \frac{c}{\sqrt{T+1}} \times \frac{T+1}{2} = \frac{c \sqrt{T+1}}{2} . \nonumber \qquad\qquad \;\; \square
    \end{align}
\end{proof}


\subsection{Proof of Lemma~\ref{lem-step-bd2}}\label{lem-step-bd2-proof}
\begin{proof}
    \textit{Part (i):} With $\gamma_t = \frac{c}{(t+1)^a}$, the lower bound on $\sum_{t=0}^T \gamma_t$ for any $T \geq 1$ becomes
    \begin{align}
        \sum_{t=0}^T \gamma_t = \sum_{t=0}^T \frac{c}{(t+1)^a} \geq \sum_{t=0}^T \frac{c}{(T+1)^a} = c (T+1)^{1-a} . \nonumber 
    \end{align}
    For $T=0$, the inequality is replaced by equality with $\gamma_0 = c$, which proves this part.

    \textit{Part (ii):} For any $T \geq 0$, $p \in [0,1)$, and $\gamma_t = \frac{c}{(t+1)^a}$, we see
    \begin{align*}
        \sum_{t=0}^T \gamma_t^{\frac{2}{1-p}} \leq \left.\frac{c^{\frac{2}{1-p}}}{(t+1)^{\frac{2a}{1-p}}} \right|_{t=0} + \int_{t=0}^{T} \frac{c^{\frac{2}{1-p}}}{(t+1)^{\frac{2a}{1-p}}} dt = c^{\frac{2}{1-p}} \left( 1 + \int_{t=0}^{T} \frac{1}{(t+1)^{\frac{2a}{1-p}}} dt \right) . 
    \end{align*}
    Now we simplify the preceding relation for different ranges of the value of $a$. \\
    (1) When $0<a<\frac{1-p}{2}$, then
    \begin{align*}
        &\sum_{t=0}^T \gamma_t^{\frac{2}{1-p}} \leq c^{\frac{2}{1-p}} \left( 1 + \frac{(1-p)\left( (T+1)^{\frac{1-p-2a}{1-p}} - 1 \right)}{1-p-2a} \right)\\
        &= c^{\frac{2}{1-p}} \left( \frac{(1-p) (T+1)^{\frac{1-p-2a}{1-p}}}{1-p-2a} - \frac{2a}{1-p-2a} \right) \leq \frac{(1-p) c^{\frac{2}{1-p}} (T+1)^{\frac{1-p-2a}{1-p}}}{1-p-2a} .
    \end{align*}
    (2) When $a = \frac{1-p}{2}$, then
    \begin{align*}
        \sum_{t=0}^T \gamma_t^{\frac{2}{1-p}} \leq c^{\frac{2}{1-p}} \left( 1 + \int_{t=0}^{T} \frac{1}{t+1} dt \right) = c^{\frac{2}{1-p}} ( 1+ \ln(T+1) ) .
    \end{align*}
    (3) When $\frac{1-p}{2}<a<1$, then
    \begin{align*}
        \sum_{t=0}^T \gamma_t^{\frac{2}{1-p}} \leq c^{\frac{2}{1-p}} \left( 1 + \frac{(1-p)\left( 1 - (T+1)^{\frac{1-p-2a}{1-p}} \right)}{2a+p-1} \right) \leq \frac{2a c^{\frac{2}{1-p}}}{2a+p-1} .
    \end{align*}

    \textit{Part (iii):} With $p \in (0,1)$, the proof follows the same analysis as that of part (ii).

    \textit{Part (iv):} For any $T \geq 0$, we see that
    \begin{align}
        \sum_{t=0}^T \frac{1}{\gamma_t} = \sum_{t=0}^T \frac{(t+1)^a}{c} \geq \int_{t=0}^T \frac{(t+1)^a}{c} dt = \frac{(T+1)^{1+a} - 1}{c(1+a)} . \label{inv_step_sum}
    \end{align}
    Next, we claim that
    \begin{align}
        (T+1)^{1+a} - 1 \geq T^{1+a} , \label{itr-bound}
    \end{align}
    with some constant $0<a<1$. In order to show that, we consider a function $\phi(T) = (T+1)^{1+a} - 1 - T^{1+a}$. We see that $\phi(0) = 0$, and the derivative of $\phi(T)$  with respect to $T$ is given as
    \begin{align}
        \frac{d\phi(T)}{dT} = (1+a) \left( (T+1)^a - T^a \right) , \nonumber
    \end{align}
    which is positive for any $T \geq 0$. Hence, we conclude that $\frac{d\phi(T)}{dT} > 0$ implying that $\phi(T)$ is an increasing function. Moreover, since we obtained $\phi(0) = 0$, hence the relation~\eqref{itr-bound} holds. Substituting relation~\eqref{itr-bound} back to \eqref{inv_step_sum}, we obtain the result of part (iv) for any $T \geq 0$.

    \textit{Part (v):} The lower bound result on $\sum_{t=0}^T \gamma_t^2$ can be obtained as follows
    \begin{align}
        \sum_{t=0}^T \gamma_t^2 = \sum_{t=0}^T \frac{c^2}{(t+1)^{2a}} \geq \sum_{t=0}^T \frac{c^2}{(T+1)^{2a}} = c^2 (T+1)^{1-2a} , \nonumber
    \end{align}
    which holds for any $T \geq 0$, similar to the result of part (i), and for any $0<a< \frac{1}{2}$. 
    \hfill$\square$
\end{proof}


\subsection{Proof of Lemma~\ref{lem-juditsky}}\label{lem-juditsky-proof}
\begin{proof}
We let $x = h_{t+1}$ and $y = h_t$ in Lemma~\ref{lem-3pt}, and obtain
\begin{align}
    B_\psi(z,h_{t+1}) + B_\psi(h_{t+1},h_t) - B_\psi(z,h_{t}) = \la \nabla \psi(h_t) - \nabla \psi(h_{t+1}), z - h_{t+1} \ra . \label{eq-3pt-lem}
\end{align}
By the update equation for $h_{t+1}$ and the optimality condition, we have
\begin{align*}
    \la \gamma_t b_{t+1} - \nabla \psi(h_t) + \nabla \psi(h_{t+1}), z - h_{t+1} \ra \geq 0 ,
\end{align*}
which can be written as $\la \nabla \psi(h_t) - \nabla \psi(h_{t+1}), z - h_{t+1} \ra \leq \gamma_t \la b_{t+1}, z - h_{t+1} \ra$.
By combining this relation with~\eqref{eq-3pt-lem}, we obtain
\begin{align*}
    B_\psi(z,h_{t+1}) + B_\psi(h_{t+1},h_t) - B_\psi(z,h_{t}) \leq \gamma_t \la b_{t+1}, z - h_{t+1} \ra . 
\end{align*}
Adding and subtracting the quantity $\gamma_t \la b_{t+1}, h_t \ra$ on the right-hand side of the preceding relation and reorganizing the terms, we further obtain
\begin{align}
    \gamma_t \la b_{t+1}, h_{t} - z \ra \leq B_\psi(z,h_{t}) - B_\psi(z,h_{t+1}) - B_\psi(h_{t+1},h_t) + \gamma_t \la b_{t+1}, h_t - h_{t+1} \ra . \label{rel-err-bd1}
\end{align}
The fourth term on the right-hand side of relation~\eqref{rel-err-bd1} can be upper bounded using Young's inequality and  using relation~\eqref{breg-lb} to find that
\begin{align*}
    \gamma_t \la b_{t+1}, h_t - h_{t+1} \ra \leq \frac{\gamma_t^2 \e_{t+1}^2}{2 \alpha} + \frac{\alpha}{2} \|h_{t} - h_{t+1} \|^2 \leq \frac{\gamma_t^2 \e_{t+1}^2}{2 \alpha} + B_\psi(h_{t+1},h_t) ,
\end{align*}
where $\e_{t+1} = \|b_{t+1}\|_*$ as defined in \eqref{epsilon-def}. Substituting the preceding relation back into relation~\eqref{rel-err-bd1}, we obtain the desired result. 
\hfill$\square$
\end{proof}



%
%

\bibliographystyle{spmpsci}      
\bibliography{references}   

\begin{thebibliography}{10}
\providecommand{\url}[1]{{#1}}
\providecommand{\urlprefix}{URL }
\expandafter\ifx\csname urlstyle\endcsname\relax
  \providecommand{\doi}[1]{DOI~\discretionary{}{}{}#1}\else
  \providecommand{\doi}{DOI~\discretionary{}{}{}\begingroup \urlstyle{rm}\Url}\fi

\bibitem{ablaev2022some}
Ablaev, S.S., Titov, A.A., Stonyakin, F.S., Alkousa, M.S., Gasnikov, A.: Some adaptive first-order methods for variational inequalities with relatively strongly monotone operators and generalized smoothness.
\newblock In: International Conference on Optimization and Applications, pp. 135--150. Springer (2022)

\bibitem{azizian2021last}
Azizian, W., Iutzeler, F., Malick, J., Mertikopoulos, P.: The last-iterate convergence rate of optimistic mirror descent in stochastic variational inequalities.
\newblock In: Conference on Learning Theory, pp. 326--358. PMLR (2021)

\bibitem{azizian2024rate}
Azizian, W., Iutzeler, F., Malick, J., Mertikopoulos, P.: The rate of convergence of bregman proximal methods: local geometry versus regularity versus sharpness.
\newblock SIAM Journal on Optimization \textbf{34}(3), 2440--2471 (2024)

\bibitem{babanezhad2020geometry}
Babanezhad, R., Lacoste-Julien, S.: Geometry-aware universal mirror-prox.
\newblock arXiv preprint arXiv:2011.11203  (2020)

\bibitem{bach2019universal}
Bach, F., Levy, K.Y.: A universal algorithm for variational inequalities adaptive to smoothness and noise.
\newblock In: Conference on learning theory, pp. 164--194. PMLR (2019)

\bibitem{beck2017first}
Beck, A.: First-order methods in optimization.
\newblock SIAM (2017)

\bibitem{bertsekas2009projection}
Bertsekas, D.P., Gafni, E.M.: Projection methods for variational inequalities with application to the traffic assignment problem.
\newblock In: Nondifferential and variational techniques in optimization, pp. 139--159. Springer (2009)

\bibitem{blondel2014large}
Blondel, M., Fujino, A., Ueda, N.: Large-scale multiclass support vector machine training via euclidean projection onto the simplex.
\newblock In: 2014 22nd International Conference on Pattern Recognition, pp. 1289--1294. IEEE (2014)

\bibitem{bottou2010large}
Bottou, L.: Large-scale machine learning with stochastic gradient descent.
\newblock In: Proceedings of COMPSTAT'2010, pp. 177--186. Springer (2010)

\bibitem{boyd2004convex}
Boyd, S., Vandenberghe, L.: Convex optimization.
\newblock Cambridge UP  (2004)

\bibitem{chakraborty2024random}
Chakraborty, A., Nedi{\'c}, A.: Randomized feasibility-update algorithms for variational inequality problems.
\newblock arXiv preprint arXiv:2402.05462  (2024)

\bibitem{chakraborty2024popov}
Chakraborty, A., Nedić, A.: Popov mirror-prox for solving variational inequalities.
\newblock In: 2024 IEEE 63rd Conference on Decision and Control (CDC), pp. 6963--6968 (2024)

\bibitem{chen1993convergence}
Chen, G., Teboulle, M.: Convergence analysis of a proximal-like minimization algorithm using bregman functions.
\newblock SIAM Journal on Optimization \textbf{3}(3), 538--543 (1993)

\bibitem{dang2015convergence}
Dang, C.D., Lan, G.: On the convergence properties of non-euclidean extragradient methods for variational inequalities with generalized monotone operators.
\newblock Computational Optimization and Applications \textbf{60}, 277--310 (2015)

\bibitem{facchinei2003finite}
Facchinei, F., Pang, J.S.: Finite-dimensional variational inequalities and complementarity problems, vol.~1.
\newblock Springer (2003)

\bibitem{gidel2018variational}
Gidel, G., Berard, H., Vignoud, G., Vincent, P., Lacoste-Julien, S.: A variational inequality perspective on generative adversarial networks.
\newblock In: International Conference on Learning Representations (2019)

\bibitem{huang2023beyond}
Huang, K., Zhang, S.: Beyond monotone variational inequalities: Solution methods and iteration complexities.
\newblock Pacific Journal of Optimization \textbf{20}(3), 403--428 (2024)

\bibitem{jiang2022generalized}
Jiang, R., Mokhtari, A.: Generalized optimistic methods for convex-concave saddle point problems.
\newblock arXiv preprint arXiv:2202.09674  (2022)

\bibitem{juditsky2016solving}
Juditsky, A., Nemirovski, A.: Solving variational inequalities with monotone operators on domains given by linear minimization oracles.
\newblock Mathematical Programming \textbf{156}(1), 221--256 (2016)

\bibitem{juditsky2011solving}
Juditsky, A., Nemirovski, A., Tauvel, C.: Solving variational inequalities with stochastic mirror-prox algorithm.
\newblock Stochastic Systems \textbf{1}(1), 17--58 (2011)

\bibitem{kinderlehrer2000introduction}
Kinderlehrer, D., Stampacchia, G.: An introduction to variational inequalities and their applications.
\newblock SIAM (2000)

\bibitem{kingma2014adam}
Kingma, D.P., Ba, J.: Adam: A method for stochastic optimization.
\newblock arXiv preprint arXiv:1412.6980  (2014)

\bibitem{klimza2024universal}
Klimza, A., Gasnikov, A., Stonyakin, F., Alkousa, M.: Universal methods for variational inequalities: Deterministic and stochastic cases.
\newblock Chaos, Solitons \& Fractals \textbf{187}, 115418 (2024)

\bibitem{korpelevich1976extragradient}
Korpelevich, G.M.: The extragradient method for finding saddle points and other problems.
\newblock Matecon \textbf{12}, 747--756 (1976)

\bibitem{kotsalis2022simple}
Kotsalis, G., Lan, G., Li, T.: Simple and optimal methods for stochastic variational inequalities, i: operator extrapolation.
\newblock SIAM Journal on Optimization \textbf{32}(3), 2041--2073 (2022)

\bibitem{liu2021smooth}
Liu, Y., Wang, Y., Singh, A.: Smooth bandit optimization: generalization to holder space.
\newblock In: International Conference on Artificial Intelligence and Statistics, pp. 2206--2214. PMLR (2021)

\bibitem{loshchilov2017decoupled}
Loshchilov, I., Hutter, F.: Decoupled weight decay regularization.
\newblock In: International Conference on Learning Representations. PMLR (2019)

\bibitem{marcotte1998weak}
Marcotte, P., Zhu, D.: Weak sharp solutions of variational inequalities.
\newblock SIAM Journal on Optimization \textbf{9}(1), 179--189 (1998)

\bibitem{minty1967generalization}
Minty, G.J.: On the generalization of a direct method of the calculus of variations.
\newblock Bulletin of the American Mathematical Society \textbf{73}(3), 315--321 (1967)

\bibitem{mishchenko2020revisiting}
Mishchenko, K., Kovalev, D., Shulgin, E., Richt{\'a}rik, P., Malitsky, Y.: Revisiting stochastic extragradient.
\newblock In: International Conference on Artificial Intelligence and Statistics, pp. 4573--4582. PMLR (2020)

\bibitem{mokhtari2020unified}
Mokhtari, A., Ozdaglar, A., Pattathil, S.: A unified analysis of extra-gradient and optimistic gradient methods for saddle point problems: Proximal point approach.
\newblock In: International Conference on Artificial Intelligence and Statistics, pp. 1497--1507. PMLR (2020)

\bibitem{nagurney1999network}
Nagurney, A.: Network Economics: A Variational Inequality Approach.
\newblock Springer (1999)

\bibitem{nakamura2005local}
Nakamura, T., Horio, H., Chiba, Y.: Local holder exponent analysis of heart rate variability in preterm infants.
\newblock IEEE Transactions on biomedical engineering \textbf{53}(1), 83--88 (2005)

\bibitem{nemirovski2004prox}
Nemirovski, A.: Prox-method with rate of convergence o (1/t) for variational inequalities with lipschitz continuous monotone operators and smooth convex-concave saddle point problems.
\newblock SIAM Journal on Optimization \textbf{15}(1), 229--251 (2004)

\bibitem{nesterov2007dual}
Nesterov, Y.: Dual extrapolation and its applications to solving variational inequalities and related problems.
\newblock Mathematical Programming \textbf{109}(2), 319--344 (2007)

\bibitem{popov1980modification}
Popov, L.D.: A modification of the arrow-hurwitz method of search for saddle points.
\newblock Mat. Zametki \textbf{28}(5), 777--784 (1980)

\bibitem{ralph1999equilibrium}
Ralph, D., Smeers, Y.: Equilibrium prices and investment in electricity markets.
\newblock Computational Economics \textbf{15}, 173--201 (1999)

\bibitem{robbins1951stochastic}
Robbins, H., Monro, S.: A stochastic approximation method.
\newblock The Annals of Mathematical Statistics \textbf{22}(3), 400--407 (1951)

\bibitem{rockafellar2009variational}
Rockafellar, R.T., Wets, R.J.B.: Variational analysis, vol. 317.
\newblock Springer Science \& Business Media (2009)

\bibitem{rodomanov2024universality}
Rodomanov, A., Jiang, X., Stich, S.U.: Universality of adagrad stepsizes for stochastic optimization: Inexact oracle, acceleration and variance reduction.
\newblock Advances in Neural Information Processing Systems \textbf{37}, 26770--26813 (2024)

\bibitem{semenov2017version}
Semenov, V.: A version of the mirror descent method to solve variational inequalities.
\newblock Cybernetics and Systems Analysis \textbf{53}(2), 234--243 (2017)

\bibitem{stampacchia1970variational}
Stampacchia, G.: Variational inequalities.
\newblock In: Congr{\`e}s international des math{\'e}maticiens, vol.~2, pp. 877--883 (1970)

\bibitem{stonyakin2022generalized}
Stonyakin, F., Gasnikov, A., Dvurechensky, P., Titov, A., Alkousa, M.: Generalized mirror prox algorithm for monotone variational inequalities: Universality and inexact oracle.
\newblock Journal of Optimization Theory and Applications \textbf{194}(3), 988--1013 (2022)

\end{thebibliography}


%
%

\end{document}